\documentclass[oneside,a5paper,11pt]{amsart}
\usepackage{mathtools,tensor,physics,tikz-cd}
\newcommand\bydef{\coloneqq}
\usepackage[a4paper,scale=.75]{geometry}
\usepackage{newtx,microtype,enumitem,graphicx}
\usepackage[mathscr]{eucal}
\usepackage[hidelinks]{hyperref}
\usepackage[alphabetic]{amsrefs}
\usepackage{tikz}
\usetikzlibrary{arrows.meta, positioning}
\numberwithin{equation}{section}
\newtheorem{THEO}{Theorem}\renewcommand{\theTHEO}{\Alph{THEO}}
\newenvironment{THEO'}[1]{\renewcommand{\theTHEO}{\ref*{#1}'}\begin{THEO}}{\end{THEO}\addtocounter{THEO}{-1}}
\newtheorem{theo}{Theorem}[section]

\newtheorem{lemm}[theo]{Lemma}
\newtheorem{coro}[theo]{Corollary}
\theoremstyle{definition}

\theoremstyle{remark}
\newtheorem{exam}[theo]{Example}
\newtheorem{rema}[theo]{Remark}
\newcommand{\R}{\mathbb{R}}
\DeclareMathOperator{\GL}{GL}
\DeclareMathOperator{\SL}{SL}
\DeclareMathOperator{\GO}{O}
\DeclareMathOperator{\SO}{SO}
\DeclareMathOperator{\Mat}{Mat}
\DeclareMathOperator{\Hom}{Hom}
\DeclareMathOperator{\End}{End}
\DeclareMathOperator{\Sym}{Sym}
\DeclareMathOperator{\Vect}{Vect}
\newcommand{\pol}{\mathcal{P}}
\newcommand{\rest}{\mathcal{R}}
\DeclareMathOperator{\id}{id}
\DeclareMathOperator{\diag}{diag}

\DeclarePairedDelimiter{\Set}{\lbrace}{\rbrace}
\DeclareMathOperator{\rey}{\mathscr{R}}
\def\O{\mathscr{O}}
\author{L.~Darondeau}
\author{M.~Florence}
\author{B.~Kolev}
\title[Weyl's Polarization in Classical Invariant Theory]{Weyl's Polarization in Classical Invariant Theory:\\A Primer, with Worked Examples}
\thanks{We are grateful to the GDR CNRS n° 2043 ``Géométrie Différentielle et Mécanique'' for fostering a supportive environment and for hosting the early presentation of this work.}
\begin{document}
\begin{abstract}
  Hermann Weyl’s \textit{The Classical Groups} is a landmark work connecting classical invariant theory with modern representation theory. It shows how polynomial invariants of the general linear, orthogonal, and symplectic groups can be systematically understood through linear representations and tensor methods.
  The current note is primarily based on a personal reading of the book of Weyl and of the more accessible books \textit{Classical Invariant Theory} by Kraft and Procesi and by Olver.
  It is neither exhaustive, nor original, nor state of the art. 
  We focus on a few selected aspects, aiming for an elementary and concrete approach.
  We work over the field of reals \(\R\) with the classical groups \(\GL(n)\), \(\SL(n)\), \(\GO(n)\), and \(\SO(n)\).
  Most of our efforts have been devoted to carefully worked examples, introducing just enough of the general theory to handle them effectively.
\end{abstract}
\maketitle

\numberwithin{section}{part}
\setcounter{tocdepth}{2}
\tableofcontents
\renewcommand{\thepart}{\Roman{part}}
\part{Introduction: from algebraic to linear problems.}
\section{Invariant \texorpdfstring{\(n\)}{n}-ary forms.}\label{secinv}
A \textsl{linear representation} of a group \(G\) on a vector space \(V\) is a group homomorphism \(\rho\colon G\to \GL(V)\). A \textsl{subrepresentation} of \(V\) is a vector subspace \(W\) that is stable under the action of \(G\). It is a representation in itself, for the induced action \(G\to\GL(W)\). For a linear representation of a group \(G\) on a vector space \(V\), and a subset \(S\subseteq V\), there is a smaller subrepresentation containing \(S\). It is obtain in two steps: first taking the smallest subspace of \(V\) containing \(S\) and stable under the action of \(G\), which is the reunion \(\rho(G)(S)\) of all orbits, and then taking the vector subspace that it generates, which is still stable because \(\rho(g)\) is a linear map.
To sum up: the subrepresentation generated by \(S\) is the subspace
\[
  \langle G\ast S\rangle
  \bydef
  \Vect\Set{\rho(g)(\vb*{v}),g\in G,\vb*{v}\in S}.
\]

Let \(G\) be a \textsl{linear group}, that is a closed matrix group \(G\subseteq\GL(r)\).
A representation \(\rho\colon G\to\GL(V)\) of \(G\) is said \textsl{polynomial} if \(\rho(g)(\vb*{v})\) depends polynomially on the coefficients of \(g\). 
It is said \textsl{rational} if this dependence is rational.
More precisely, these properties can be checked in coordinates using the following definitions.
A \textsl{polynomial} function \(G\to\R\) is the restriction to \(G\subseteq\operatorname{M}_r\) of a polynomial function \(f\in\R[\operatorname{M}_r]\) in the coefficients of \((r\times r)\)-matrices \(g\in\operatorname{M}_r\).
A \textsl{regular} function \(G\to\R\) is the restriction to \(G\) of a rational function \(f\in\R(\operatorname{M}_r)\) that is defined everywhere on \(G\).
We denote by \(\O(G)\) the algebra of regular functions \(G\to\R\). We use this notation rather than the standard mathematical notation \(\R[G]\) in order to avoid a possible confusion with the algebra of polynomial functions on \(G\).
Note that a polynomial that does not vanish on the full matrix group \(\GL(r)\) is a power of \(\det(g)\);
this is a standard consequence of Hilbert's Nullstellensatz, a fundamental result in algebraic geometry, and it will be admitted.
Therefore, for a subgroup \(G\subseteq\GL(r)\), the denominators of regular functions \(f\in\O(G)\) are always powers of \(\det\). 
The foregoing discussion can be summarized by the following equation:
\[
  \O(G)
  \subseteq
  \O(\GL(r))
  =
  \R[\operatorname{M}_r][\operatorname{det}^{-1}].
\]

Let \(V\) be a \(n\)-dimensional rational representation of the linear group \(G\). Then the direct sum 
\[
  V^m
  =
  V\oplus\stackrel{m}\dotsb\oplus V
\]
of \(m\) copies of \(V\) is a representation as well, for the \textsl{diagonal action}
\[
  g\ast(\vb*{v}_1,\dotsc,\vb*{v}_m)
  = 
  (\rho(g)(\vb*{v}_1),\dotsc,\rho(g)(\vb*{v}_m)).
\]
Given a basis \(\mathcal{B}=(\vb*{e}_1,\dotsc,\vb*{e}_n)\) of \(V\), an element \((\vb*{v}_1,\dotsc,\vb*{v}_m)\) of \(V^m\) is represented by the \((n\times m)\)-matrix of coordinates
\[
  \vb{M}
  =
  \Mat(v_\alpha^i)
  =
  \left[\vb*{v}_1\vert\dotso\vert \vb*{v}_m\right]_{\mathcal{B}},
\]
whose columns are the coordinates of the vectors \(\vb*{v}_\alpha\).
For \(g\in G\), let \(A\) be the \((n\times n)\)-matrix of the linear isomorphism \(\rho(g)\colon V\to V\) in the basis \(\mathcal{B}\).
Then the matrix form of the diagonal action is the matrix product
\[
  [g*\vb{M}]_{\mathcal{B}}
  =
  \left[A\vb*{v}_1\vert\dotso\vert A \vb*{v}_m\right]_{\mathcal{B}}
  =
  A\vb{M}.
\]

This action induces a rational representation of \(G\) on the vector space \(\R[V^m]\) of multivariate polynomials \(f=f(\vb{M})=f(v_\alpha^i)\), given by
\[
  (g\ast f)(\vb{M})
  =
  f(g^{-1}\ast \vb{M}).
\]
This representation preserves the decomposition of \(f\in\R[V^m]\) into the sum of homogeneous polynomials \(f=f_{(0)}+f_{(1)}+\dotsb+f_{(d)}\), where the degree of a monomial is its total degree in the components \(v_\alpha^i\).
An \textsl{invariant \(m\)-ary form} is a homogeneous multivariate polynomial \(f\in\R[V^m]\) such that \(g\ast f = f\) for every \(g\in G\).
In other words, the homogeneous polynomial \(f\) must be constant on the orbits of \(G\) in \(V^m\).

The main goal of Classical Invariant Theory is to classify and describe polynomial invariants,
understand their algebraic relations, and determine generating sets for the invariant algebras. 
Explicit computation of these invariants soon becomes tedious.
The symbolic approach represents forms and invariants using formal symbols, allowing one to manipulate compact ``human readable'' algebraic expressions without immediately expanding them into intractable expressions.
It translates the action of a group on polynomials into operations on symbols, making the computation of invariants more systematic and combinatorial. This method was a central tool before the modern, tensor-based representation-theoretic perspective.
Weyl has connected the classical symbolic approach with modern representation theory by showing that the formal symbols of invariant theory correspond to tensors acted on by groups. Invariants then appear naturally as elements fixed under these group actions, allowing symbolic manipulations to be replaced by systematic, linear-algebraic methods.

\section{Polarization and restitution.}
The main tool to turn an algebraic problem into a linear-algebraic problem is the polarization process.
The most familiar view of polarization is as the process of linearizing a homogeneous polynomial into a symmetric multilinear form.

\begin{theo}
  Every homogeneous polynomial \(f(\vb*{v})\in\R[V]\) of degree \(d\) can be uniquely associated to a symmetric multilinear form \(\pol_d(f)(\vb*{v}_1,\dotsc,\vb*{v}_d)\in\Sym^d(V^*)\subseteq\R[V^d]\), that is called its \textsl{complete polarization}.
  Moreover, the original polynomial \(f\) is recovered from \(\pol_d(f)\) by evaluating it on the diagonal: 
  \[
    f(\vb*{v})=\pol_d(f)(\vb*{v},\dotsc,\vb*{v}).
  \]
\end{theo}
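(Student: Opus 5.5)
We construct $\pol_d(f)$ explicitly, check that it is symmetric, multilinear and restricts to $f$ on the diagonal, and then prove uniqueness via the polarization identity. Since we work over $\R$, division by $d!$ is available; this is the only point where characteristic zero is used.

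\emph{Existence.} Fix a basis $(\vb*{e}_1,\dotsc,\vb*{e}_n)$ of $V$ and write $f$ as a sum of monomials of total degree $d$. Each such monomial can be written as $x^{i_1}x^{i_2}\dotsm x^{i_d}$ with indices $i_1\le\dotsb\le i_d$. We send it to the symmetrized product
\[
  \frac{1}{d!}\sum_{\sigma\in\mathfrak{S}_d} v_{\sigma(1)}^{i_1}\dotsm v_{\sigma(d)}^{i_d},
\]
and extend linearly in $f$. The result is visibly multilinear, since it is linear in each $\vb*{v}_\alpha$ separately. It is also symmetric, because it is an average over $\mathfrak{S}_d$. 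Setting $\vb*{v}_1=\dotsb=\vb*{v}_d=\vb*{v}$ makes all $d!$ terms equal to the original monomial, so the diagonal restriction gives back $f$.

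A basis-free alternative is also available. One can define $\pol_d(f)(\vb*{v}_1,\dotsc,\vb*{v}_d)$ as $\frac{1}{d!}$ times the coefficient of $t_1t_2\dotsm t_d$ in the polynomial $f(t_1\vb*{v}_1+\dotsb+t_d\vb*{v}_d)$, or equivalently as $\frac{1}{d!}\,\partial_{t_1}\dotsm\partial_{t_d}$ of that polynomial. Symmetry and multilinearity are then checked directly. At $\vb*{v}_\alpha=\vb*{v}$, one uses $f((\sum t_\alpha)\vb*{v})=(\sum t_\alpha)^d f(\vb*{v})$. The coefficient of $t_1\dotsm t_d$ in $(\sum t_\alpha)^d$ is $d!$, so we recover $f(\vb*{v})$. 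I would present this version, since it does not depend on the choice of basis.

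\emph{Uniqueness.} This is the main point. Suppose $F$ is a symmetric multilinear form with $F(\vb*{v},\dotsc,\vb*{v})=f(\vb*{v})$. Expand $f(t_1\vb*{v}_1+\dotsb+t_d\vb*{v}_d)=F(\sum_\alpha t_\alpha\vb*{v}_\alpha,\dotsc,\sum_\alpha t_\alpha\vb*{v}_\alpha)$ by multilinearity. The coefficient of $t_1\dotsm t_d$ collects exactly the terms $F(\vb*{v}_{\sigma(1)},\dotsc,\vb*{v}_{\sigma(d)})$ for $\sigma\in\mathfrak{S}_d$. By symmetry each of these equals $F(\vb*{v}_1,\dotsc,\vb*{v}_d)$, so the coefficient is $d!\,F(\vb*{v}_1,\dotsc,\vb*{v}_d)$. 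Hence $F$ is determined by $f$, and in fact coincides with the formula above.

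Equivalently, we have the polarization identity
\[
  F(\vb*{v}_1,\dotsc,\vb*{v}_d)=\frac{1}{d!}\sum_{\emptyset\neq S\subseteq\{1,\dotsc,d\}}(-1)^{d-|S|}f\Bigl(\sum_{\alpha\in S}\vb*{v}_\alpha\Bigr),
\]
obtained by inclusion–exclusion. This identity makes uniqueness completely explicit.
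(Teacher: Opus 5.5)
Your proof is correct. Its existence half is exactly the paper's proof: average each monomial over \(S_d\) and extend by linearity.

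The paper stops at that point. It never justifies the word ``uniquely'', nor the independence from the chosen basis. Your argument fills this in: you extract the coefficient of \(t_1\dotsm t_d\) from \(F(\sum_\alpha t_\alpha\vb*{v}_\alpha,\dotsc,\sum_\alpha t_\alpha\vb*{v}_\alpha)\). This uniqueness is exactly what the paper later relies on when a remark asserts that \(\pol_d(\rest_d(F))=F\) if and only if \(F\) is symmetric.

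Your basis-free formula \(\frac{1}{d!}\partial_{t_1}\dotsm\partial_{t_d}f(\sum_\alpha t_\alpha\vb*{v}_\alpha)\) is the same operator as \(\frac{1}{d!}\prod_\alpha\sigma_\alpha\). The paper only reaches that operator in the section on polarization operators, by checking it on pure powers. Both compute \(\frac{1}{d!}\) times the \(d\)-th differential of \(f\) evaluated on \(\vb*{v}_1,\dotsc,\vb*{v}_d\). Your formula also makes the \(G\)-equivariance of \(\pol_d\), asserted in a later remark, immediate.

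Finally, your inclusion--exclusion identity is the general form of the cubic identity for \(v^1v^2v^3\) in Example~\ref{exam:pure_powers}: take \(V=\R\) and \(f(x)=x^3\), whose polarization is \(xyz\).

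In short, the paper's version buys brevity in fixed coordinates. Yours buys the full statement (existence, uniqueness, basis-independence) for a few extra lines.
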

\begin{proof}
  For a degree \(d\) monomial \(f(\vb*{v})=v^{i_1}\dotsm v^{i_d}\), we define \(\pol_d(f)\) by averaging on the symmetric group:
  \[
    \pol_d(f)(\vb*{v}_1,\dotsc,\vb*{v}_d)
    =
    \frac{1}{d!}
    \sum_{\alpha\in S_d}
    v_{\alpha_1}^{i_{1}}\dotsm v_{\alpha_d}^{i_{d}}
    \in\Sym^d(V^*).
  \]
  Then we extend the definition by linearity.
\end{proof}
\begin{rema}
  If \(F\colon V^{d}\to\R\) is multilinear, its evaluation on the diagonal is a homogeneous polynomial \(\rest_d(F)\) of degree \(d\).
  The process \(F\to\rest_d(F)\) is called the \textsl{restitution}.
  Therefore, for any \(F\), after polarization, \(\pol_d(\rest_d(F))\) is again a multilinear form on \(V^{d}\), but which is symmetric.  It agrees with \(F\) if and only if \(F\) is already symmetric. Hence the linear map \((\pol_d\circ\rest_d)\) is a projection on the subspace of symmetric multilinear forms.
\end{rema}
\begin{exam}
  Let \(\ell\) be a linear form on \(V\).
  One has
  \[
    \pol_d(\ell^d)(\vb*{v}_1,\dotsc,\vb*{v}_d)
    =
    \ell(\vb*{v}_1)\dotsm\ell(\vb*{v}_d).
  \]
  Indeed \(\rest_d(\ell(\vb*{v}_1)\dotsm\ell(\vb*{v}_d))=\ell(\vb*{v})^d\), and the multilinear form
  \(\ell(\vb*{v}_1)\dotsm\ell(\vb*{v}_d)\) is symmetric.
\end{exam}
\begin{rema}
  Since we use the diagonal action of the group \(G\) on the vector space \(V^d\), the polarization process \(\pol_d\), the restitution process \(\rest_d\), and the projector \((\pol_d\circ\rest_d)\) all commute with the action of \(G\).
\end{rema}
\begin{exam}
  Let \(V=\R^2\), and let \(f(\vb*{v})=av^1v^1+2bv^1v^2+cv^2v^2\) be a quadratic form.
  Its polarization is the symmetric bilinear form
  \[
    \pol_2(f)(\vb*{v}_1,\vb*{v}_2)
    =
    av_1^1v_2^1+b(v_1^1v_2^2+v_2^1v_1^2)+cv_1^2v_2^2.
  \]
\end{exam}
\begin{exam}
  Let \(V=\R^3\), and let \(f(\vb*{v})=3v^1v^1v^3\), a cubic form.
  Its polarization is the symmetric trilinear form
  \[
    \pol_3(f)(\vb*{v}_1,\vb*{v}_2,\vb*{v}_3)
    =
    v_1^1v_2^1v_3^3
    +
    v_1^1v_2^3v_3^1
    +
    v_1^3v_2^1v_3^1.
  \]
\end{exam}

We will need a more general version with several representations.
\begin{theo}
  Let \(V_1,\dots,V_k\) be finite-dimensional representations of a group \(G\).  
  Every multi-homogeneous polynomial \(f\) of multi-degree \((d_1,\dots,d_k)\) in vectors \(\vb*{v}_1,\dots,\vb*{v}_k\) can be uniquely associated with a multi-symmetric multilinear form
  \[
    \pol_{d_1,\dotsc,d_k}(f)\colon V_1^{d_1}\times\dotsb\times V_k^{d_k} \to \R.
  \]
  Moreover, the original polynomial \(f\) is recovered from \(\pol_{d_1,\dotsc,d_k}(f)\) by evaluating it on the multi-diagonal. 
\end{theo}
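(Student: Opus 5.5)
The plan mimics the proof of the single-representation theorem above, replacing the symmetric group \(S_d\) by the product \(S_{d_1}\times\dotsb\times S_{d_k}\). First we fix bases of \(V_1,\dotsc,V_k\) and write \(v_j^{i}\) for the coordinates of \(\vb*{v}_j\in V_j\). A multi-homogeneous polynomial of multi-degree \((d_1,\dotsc,d_k)\) is then a linear combination of monomials
\[
  m=\prod_{j=1}^k v_j^{i_{j,1}}\dotsm v_j^{i_{j,d_j}}.
\]
For such a monomial we introduce \(d_j\) copies \(\vb*{v}_{j,1},\dotsc,\vb*{v}_{j,d_j}\) of each vector and define
\[
  \pol_{d_1,\dotsc,d_k}(m)
  =
  \prod_{j=1}^k
  \frac{1}{d_j!}\sum_{\alpha\in S_{d_j}}
  v_{j,\alpha_1}^{i_{j,1}}\dotsm v_{j,\alpha_{d_j}}^{i_{j,d_j}}.
\]
Equivalently, this is \(\pol_{d_1}(m_1)\dotsm\pol_{d_k}(m_k)\), where \(m=m_1\dotsm m_k\) is split according to the blocks of variables. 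We then extend by linearity. The result is visibly multilinear and symmetric within each block, which is what multi-symmetric means. Setting \(\vb*{v}_{j,1}=\dotsb=\vb*{v}_{j,d_j}=\vb*{v}_j\), each of the \(d_j!\) terms of the \(j\)-th factor reduces to \(m_j\), so restitution on the multi-diagonal returns \(m\), and hence \(f\) by linearity.

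One point must be checked: the definition should not depend on how a monomial is written. Reordering the indices \(i_{j,1},\dotsc,i_{j,d_j}\) inside a block only reindexes the sum over \(S_{d_j}\), so the value is unchanged. This is the same (implicit) verification as in the one-variable theorem.

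For uniqueness, I would argue as follows. Suppose \(F\) is a multi-symmetric multilinear form whose restitution on the multi-diagonal vanishes. We must show \(F=0\). Fix all blocks except the \(j\)-th. Then \(F\) is a symmetric \(d_j\)-linear form in the \(j\)-th block, with coefficients that are multilinear in the other variables. The one-block statement, namely that \(\pol_d\circ\rest_d\) is the identity on symmetric forms (the earlier remark), is what we want to apply. The plan is to proceed by induction on the blocks, restituting one block at a time. Concretely, \(F\) is determined by its values after restituting blocks \(1,\dotsc,j-1\), because \(\pol\) recovers each block from its diagonal. Applying this for \(j=1,\dotsc,k\) shows that \(F\) is determined by its full restitution, which is zero.

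This uniqueness step is the main obstacle. I expect it to require care because the one-block recovery must be carried out with the other variables treated as parameters. The cleanest route is probably a direct coefficient argument. In the monomial basis, a multi-symmetric multilinear form is determined by its coefficients on the multisets of indices in each block. Restitution sends distinct such data to distinct multi-homogeneous monomials, multiplied by nonzero multinomial factors, so it is injective on multi-symmetric forms.
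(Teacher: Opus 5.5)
Your proposal is correct and is essentially the paper's argument. The paper writes \(f=\sum_i h_i^1\otimes\dotsb\otimes h_i^k\) in \(\R[V_1]\otimes\dotsb\otimes\R[V_k]\) and sets \(\pol_{d_1,\dotsc,d_k}=\pol_{d_1}\otimes\dotsb\otimes\pol_{d_k}\), with restitution \(\rest_{d_1}\otimes\dotsb\otimes\rest_{d_k}\); this is exactly your block-wise symmetrization of monomials, since monomials are pure tensors. Your coefficient argument (restitution is injective on multi-symmetric forms because the multinomial factors are nonzero) makes explicit the uniqueness the paper leaves implicit, so your vaguer induction-on-blocks sketch can simply be dropped.
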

\begin{proof}
  A multi-homogeneous polynomial \(f\) of multi-degree \((d_1,\dotsc,d_k)\) can be written as a finite sum
  \[
    f(\vb*{v}_1,\dotsc,\vb*{v}_k)
    =
    \sum_i
    h_i^1(\vb*{v}_1)
    \dotsm
    h_i^k(\vb*{v}_k),
  \]
  where the functions \(h_i^j\) are homogeneous of degree \(d_j\).
  In tensor notations on the polynomial algebra 
  \[
    \R[V_1\times\dotsb\times V_k]=\R[V_1]\otimes\dotsb\otimes\R[V_k],
  \]
  one has
  \[
    f
    =
    \sum_i
    h_i^1\otimes\dotsb\otimes h_i^k.
  \]
  We use the complete polarization operator \(\pol_{d_j}\) on each \(\R[V_j]\):
  \[
    \pol_{d_1,\dotsc,d_k}(f)
    \bydef
    \sum_i
    \pol_{d_1}(h_i^1)
    \otimes
    \dotsb
    \otimes
    \pol_{d_k}(h_i^k).
  \]
  The restitution is then given by \(\rest_{d_1}\otimes\dotsb\otimes\rest_{d_k}\), that is by the evaluation on the multi-diagonal.
\end{proof}
\begin{exam}
  Let \(V_1=\R^2\) and \(V_2=\R^3\). Let \(f(\vb*{u},\vb*{v})=3u^1u^1v^1v^1v^3\), a quintic monomial of the vectors \(\vb*{u}\in V_1\) and \(\vb*{v}\in V_2\).
  Then \(\pol_{2,3}(f)\) is the multilinear form in \(5\) vector variables
  \[
    \pol_{2,3}(f)(\vb*{u}_1,\vb*{u}_2,\vb*{v}_1,\vb*{v}_2,\vb*{v}_3)
    =
    u_1^1u_2^1
    (
    v_1^1v_2^1v_3^3
    +
    v_1^1v_2^3v_3^1
    +
    v_1^3v_2^1v_3^1
    ).
  \]
\end{exam}

\begin{rema}
  If \(F\) is a multilinear form on  \(V_1^{d_1} \times \dotsb \times V_k^{d_k}\), its evaluation on the multi-diagonal is a multi-homogeneous polynomial \(\rest_{d_1,\dotsc,d_k}(F)\) in \(\vb*{v}_1,\dotsc,\vb*{v}_k\), of multi-degree \((d_1,\dotsc,d_k)\).
  The process \(F\to\rest_{d_1,\dotsc,d_k}(F)\) is called the \textsl{restitution}.
  As in the case \(k=1\),
  the linear map \((\pol_{d_1,\dotsc,d_k}\circ\rest_{d_1,\dotsc,d_k})\) is a projection on the subspace of multi-symmetric multilinear forms,
  which commutes with the action of \(G\).
\end{rema}

Since the polarization and the restitution processes commute with the action of \(G\),
invariant multi-homogeneous polynomials are sent on invariant multilinear forms.
This leads to the following final formulation.
\begin{THEO}\label{theo:linearization}
  Let \(V_1,\dotsc,V_k\) be finite-dimensional representations of a group \(G\).
  One can identify multi-homogeneous invariant forms with multi-symmetric invariant multilinear maps.
\end{THEO}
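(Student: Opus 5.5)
The identification will be the pair of mutually inverse maps \(\pol_{d_1,\dotsc,d_k}\) and \(\rest_{d_1,\dotsc,d_k}\) from the previous theorem, restricted to invariants. Write \(\mathcal{F}\) for the space of multi-homogeneous polynomials of multi-degree \((d_1,\dotsc,d_k)\) on \(V_1\times\dotsb\times V_k\), and \(\mathcal{S}\) for the space of multi-symmetric multilinear forms on \(V_1^{d_1}\times\dotsb\times V_k^{d_k}\). Both carry the \(G\)-action \((g\ast F)(\dotsc)=F(g^{-1}\ast\dotsc)\), taken diagonally on all vector arguments. I will show three things: the two maps are inverse bijections \(\mathcal{F}\leftrightarrow\mathcal{S}\); they are \(G\)-equivariant; and hence they restrict to bijections \(\mathcal{F}^G\leftrightarrow\mathcal{S}^G\) between the invariant subspaces.

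For the bijection, the preceding theorem already gives \(\rest\circ\pol=\id_{\mathcal{F}}\), since evaluating on the multi-diagonal recovers \(f\). For the reverse composition, the remark after it says \(\pol\circ\rest\) is the projector onto \(\mathcal{S}\), so it is the identity on \(\mathcal{S}\). I would check this directly: \(\rest\) is a linear map \(\mathcal{S}\to\mathcal{F}\), and \(\pol\) is a right-inverse to it, so it suffices to know that \(\rest\) is injective on \(\mathcal{S}\). Here I reduce to a single factor, since both maps are tensor products of the one-factor maps. In one factor, the classical polarization identity recovers a symmetric multilinear \(F\) from its diagonal values: \(F(\vb*{v}_1,\dotsc,\vb*{v}_d)\) is \(\tfrac{1}{d!}\) times the coefficient of \(t_1\dotsm t_d\) in \(\rest_d(F)(t_1\vb*{v}_1+\dotsb+t_d\vb*{v}_d)\). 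Expanding by multilinearity, and using symmetry to collect the \(d!\) ordered terms, verifies this. We work over \(\R\), so dividing by \(d!\) is allowed. This injectivity check is the only step needing real care; everything else is formal.

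For equivariance, let \(F\) be multilinear and \(g\in G\). Then \(\rest(g\ast F)(\vb*{v}_1,\dotsc,\vb*{v}_k)=F(g^{-1}\vb*{v}_1,\dotsc,g^{-1}\vb*{v}_1,\dotsc)\), which equals \((g\ast\rest F)(\vb*{v}_1,\dotsc,\vb*{v}_k)\), because the diagonal action sends the multi-diagonal to itself. The same computation shows that \(\mathcal{S}\) is \(G\)-stable, since acting diagonally commutes with permuting the arguments within each block. Since \(\pol\) is the inverse of the equivariant bijection \(\rest|_{\mathcal{S}}\), it is equivariant as well. Therefore if \(g\ast f=f\) for all \(g\), then \(g\ast\pol(f)=\pol(g\ast f)=\pol(f)\). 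Conversely, if \(F\in\mathcal{S}^G\), then \(\rest(F)\) is invariant. So the bijection restricts to a linear isomorphism \(\mathcal{F}^G\cong\mathcal{S}^G\), which is the claimed identification.

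Summing over all multi-degrees gives the identification for all multi-homogeneous invariant forms at once. Note that no hypothesis on \(G\) (reductivity, rationality) is needed; only characteristic zero is used.
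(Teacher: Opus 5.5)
Your argument is correct and is essentially the paper's: Theorem~A is obtained there exactly by noting that \(\pol_{d_1,\dotsc,d_k}\) and \(\rest_{d_1,\dotsc,d_k}\) are mutually inverse between multi-homogeneous forms and multi-symmetric multilinear forms, and that both commute with the diagonal \(G\)-action, so they exchange invariants. You additionally supply details the paper leaves to its remarks: injectivity of restitution on multi-symmetric forms via the classical polarization identity, and equivariance of \(\pol\) deduced from that of \(\rest\).
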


In what follows, we will mainly consider several copies \(V_1,\dotsc,V_k\) of a finite-dimensional representation \(V\), its dual representation \(V^*\) or the representation \(\End(V)\) with the action by conjugation.
Recall that there is a natural identification between the representations \(\End(V)\) and \(V\otimes V^*\).
Therefore, a linear form on the tensor space \(\End(V)=V\otimes V^*\) is a bilinear form on the product \(V^*\times V\).
More generally, every multilinear form \(\End(V)^d\to\mathbb{R}\) can be seen as a multilinear form \(V^d\oplus (V^*)^d\to\R\) in twice as many variables.
Let \(V_1,\dotsc,V_k\) be several copie of \(V\), \(V^*\) or \(\End(V)\).
By linearization, we can see every multihomogeneous polynomial form on \(V_1\oplus\dotsm\oplus V_k\) as 
certain multilinear maps on \(V,V^*,\End(V)\). By the identification \(\End(V)=V\otimes V^*\) above, we can actually see them as certain multilinear maps on \(V\) and \(V^*\) only (that is mixed tensors on \(V\)).
Since the polarization and the restitution processes commute with the \(G\)-action, this identification also identifies invariants.
Therefore \emph{the study of invariant mixed tensors provides a complete understanding of invariant forms on all spaces constructed from \(V\), \(V^*\), and \(\mathrm{End}(V)\)}.
\begin{exam}
  We take \(V_1=\End(V)\), and the quadratic form \(f(\vb{A})=\Trace(\vb{A}^2)\).
  Then, \(\pol_2(f)\) is the bilinear map
  \[
    \pol_2(f)(\vb{A},\vb{B})
    =
    \Trace(\vb{AB})
    =
    a_j^i b_i^j
    =
    \delta^j_k\delta^{\ell}_i
    a_j^i b_\ell^k.
  \]
  It corresponds to the fourth order mixed tensor with components
  \[
    \pol_2(f)(\vb*{e}_i,\vb*{e}^j,\vb*{e}_k,\vb*{e}^\ell)
    =
    \pol_2(f)(\vb*{e}_i\otimes \vb*{e}^j,\vb*{e}_k\otimes \vb*{e}^\ell)
    =
    \delta^j_{k}\delta^{\ell}_i.
  \]
\end{exam}

\section{Polarization operators.}
There is another more general description of the polarization process using differential operators of \(\R[V^d]\), as exposed notably in~\cites{GY03,O99}.
It is a convenient formulation of the classical symbolic approach.

We start with a preliminary fundamental result of independent interest, that will allow more direct proofs.
\begin{theo}\label{theo:pure_powers}
  The space \(\R[V]_d\) of homogeneous polynomials of degree \(d\) is spanned by the \(d\)th powers of linear forms.
\end{theo}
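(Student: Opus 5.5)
We argue by duality. Let \(W\subseteq\R[V]_d\) be the linear span of the powers \(\ell^d\), \(\ell\in V^*\). It suffices to show that every linear form \(\phi\colon\R[V]_d\to\R\) vanishing on \(W\) vanishes identically, since then \(W=\R[V]_d\) by finite-dimensionality.

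To identify \(\R[V]_d^*\) concretely, we use the complete polarization of the first theorem. Via \(\pol_d\), the space \(\R[V]_d\) is isomorphic to \(\Sym^d(V^*)\), the symmetric multilinear forms on \(V^d\). A linear form on the latter is represented by a symmetric tensor \(T\in\Sym^d(V)\subseteq V^{\otimes d}\), acting by contraction. It is convenient to set up a nondegenerate pairing
\[
  \langle F, T\rangle
\]
between \(\Sym^d(V^*)\) and \(\Sym^d(V)\). In coordinates, one pairs the monomials \(v^{i_1}\dotsm v^{i_d}\) with the dual monomials in a basis \(\vb*{e}_i\). Concretely, \(\phi\) is then a family of numbers \(\phi_\beta\) indexed by multi-indices \(\beta\) with \(\abs{\beta}=d\), namely the values of \(\phi\) on the monomials \(v^\beta\).

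For \(\ell=\sum_i x_i v^i\), the multinomial formula gives
\[
  \phi(\ell^d)=\sum_{\abs{\beta}=d}\binom{d}{\beta}\phi_\beta\,x^\beta .
\]
This is a polynomial function of \(x=(x_1,\dotsc,x_n)\in\R^n\). If \(\phi\) vanishes on \(W\), this polynomial vanishes for all real \(x\).

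The key step, and the only real point, is that a polynomial over the infinite field \(\R\) vanishing identically on \(\R^n\) has all coefficients zero. This follows by induction on \(n\): view the polynomial as a one-variable polynomial in \(x_n\) with coefficients in \(\R[x_1,\dotsc,x_{n-1}]\), and use that a nonzero univariate polynomial has finitely many roots. Since the multinomial coefficients \(\binom{d}{\beta}\) are nonzero in characteristic zero, we conclude \(\phi_\beta=0\) for every \(\beta\), so \(\phi=0\).

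As a more constructive alternative, we could recover each monomial explicitly. Taking \(\ell_t=\sum_i t_i v^i\) and applying finite differences in the parameters \(t_i\), which amounts to the same Vandermonde-type invertibility, expresses \(v^\beta\) as a linear combination of finitely many powers \(\ell_t^d\). The duality argument above avoids this bookkeeping. The main obstacle is only to check that the coefficients \(\binom{d}{\beta}\) do not vanish, which is where characteristic zero (here \(\R\)) is used.
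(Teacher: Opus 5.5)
Your argument is correct, but it takes a different route from the paper.

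The paper's proof is constructive. It sets \(\ell_t=t_1v^1+\dotsb+t_nv^n\) and applies the finite-difference operators \(\Delta_1^{i_1}\dotsm\Delta_n^{i_n}\) in the parameters \(t\) to \(\ell_t^d\). Since \(\abs{I}=d\), every term \(t^J\) with \(J\neq I\) is killed, which gives the explicit identity \(\Delta_1^{i_1}\dotsm\Delta_n^{i_n}(\ell_t^d)=d!\,v^I\). This is precisely the ``more constructive alternative'' you mention without carrying out.

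Your duality argument instead shows that no nonzero linear form on \(\R[V]_d\) annihilates all pure powers. It reduces everything to two facts: a real polynomial vanishing on \(\R^n\) is zero, and the multinomial coefficients \(\binom{d}{\beta}\) are nonzero. Both proofs use characteristic zero in the same essential way (division by \(d!\), resp.\ \(\binom{d}{\beta}\neq0\)).

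The paper's version buys explicit formulas with small integer parameters, as in Example~\ref{exam:pure_powers}. Yours gives no formula, but it is shorter and more flexible. Verbatim, it shows that the powers \(\ell^d\) already span \(\R[V]_d\) when \(\ell\) ranges over any subset of \(V^*\) on which no nonzero homogeneous polynomial of degree \(d\) vanishes. Examples are a grid \(\{0,\dotsc,d\}^n\), a non-empty open set, or the orbit \(\GL(V)\ast\ell\) of a nonzero \(\ell\). The last case is the form \(\R[V]_d=\langle G\ast\ell^d\rangle\) invoked in a later remark of the paper.

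Finally, your detour through \(\pol_d\) and the pairing with \(\Sym^d(V)\) plays no role. You only use the coordinates \(\phi_\beta=\phi(v^\beta)\) of \(\phi\) in the basis dual to the monomials, so that paragraph can simply be dropped.
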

\begin{proof}
  It is sufficient to show this property for monomials \(v^I=(v^1)^{i_1}\dotsm(v^n)^{i_n}\).
  Consider the linear form \(\ell(t_1,\dotsc,t_n)=(t_1v^1+\dotsb+t_nv^n)\).
  One has
  \[
    \ell^d
    =
    \sum_{i_1+\dotsb+i_n=d}
    \frac{d!}{i_1!\dotsm i_n!}
    t^Iv^I.
  \]
  The monomials \(v^I\) can be retrieved using the finite differences operators
  \[
    \Delta_i(P)
    =
    P(t_1,\dotsc,t_i+1,\dotsc,t_n)-P(t_1,\dotsc,t_i,\dotsc,t_n).
  \]
  Indeed, for every \(j\), one has
  \[
    \Delta_j t_j^k=(t_j^{k-1}+\dotsb+t_j+1)
    \qqtext*{, whence}
    \Delta_j^kt_j^k=k!
    \qand
    \Delta_j^{k+1}t_j^k=0.
  \]
  Therefore
  \[
    \Delta_1^{i_1}\dotsm\Delta_n^{i_n}(\ell^d)(0)
    =
    d!
    v^I.
  \]
  Note that the polynomial is constant in \(t_1,\dotsc,t_n\) but evaluating at \(0\) can be useful in practice.
  We have written \(v^I\) as a linear combination of \(2^n\) pure powers.
\end{proof}
\begin{exam}\label{exam:pure_powers}
  One has the famous identity
  \[
    v^1v^2
    =
    \frac{ (v^1+v^2)^2 - (v^1)^2 - (v^2)^2 }{2}.
  \]
  One has also the less familiar identity
  \[
    v^1v^1v^2
    =
    \frac{(2v^1+v^2)^3-2(v^1+v^2)^3-6(v^1)^3+(v^2)^3}{6}.
  \]
  Actually, it can be derived by specialization from the more general (and more regular) formula
  \[
    v^1v^2v^3
    =
    \frac{(v^1+v^2+v^3)^3-(v^1+v^2)^3-(v^1+v^3)^3-(v^2+v^3)^3+(v^1)^3+(v^2)^3+(v^3)^3}{6}.
  \]
\end{exam}

We return to the main subject of this section.
For \(1\leq\alpha\leq d\), let \(\sigma_{\alpha}\) be the linear differential operator \(\R[V]\to\R[V\oplus V^d]\) defined by
\[
  \sigma_{\alpha}
  =
  \sum_{i=1}^{n}
  v_\alpha^i\pdv{v^i}.
\]
Note that the operators \(\sigma_\alpha,\sigma_\beta\) commute.
By Leibniz rule, for a linear form \(\ell\), one has 
\[
  \sigma_\alpha(\ell(\vb*{v})^d)
  =
  \ell(\vb*{v}_\alpha)d\ell(\vb*{v})^{d-1}.
\]
Therefore one recovers the complete polarization process as
\[
  \pol_d
  =
  \frac{1}{d!}\prod_{\alpha=1}^d\sigma_{\alpha}
\]
since these linear differential operators coincide on pure powers, which generate the vector space \(\R[V]_d\).

More generally, there is \textsl{partial polarization} processes \(\R[V]_d\to\R[V^d]\), that yields multi-homogeneous polynomials, defined for \(i_1,\dotsc,i_d\geq0\) such that \(i_1+\dotsb+i_d=d\) by
\[
  \pol_{(i_1,\dotsc,i_d)}
  =
  \frac{1}{d!}
  \prod_{\alpha=1}^d\sigma_{\alpha}^{i_\alpha}.
\]
Concretely, for pure powers, one has
\[
  \pol_{(i_1,\dotsc,i_d)}(\ell(\vb*{v})^d)
  =
  \ell(\vb*{v}_{1})^{i_1}
  \dotsm
  \ell(\vb*{v}_{d})^{i_d},
\]
and one extends this definition by linearity to all homogeneous polynomials in \(\R[V]_d\).

One can pass more conveniently from one polarization process to another using the linear differential operator \(\R[V^d]\to\R[V^d]\) defined for \(1\leq\alpha,\beta\leq d\) by
\[
  \sigma_{\alpha\beta}
  =
  \sum_{i=1}^{n}
  v_\alpha^i\pdv{v_\beta^i}.
\]
One can generate all polarization \(\pol_{(i_1,\dotsc,i_d)}\) processes from the complete polarization \(\pol_d=\pol_{(1,\dotsc,1)}\) by iterating the following rule, which is easily checked on pure powers.
If \(\alpha\neq\beta\), then one has:
\[
  \sigma_{\alpha\beta}\pol_{(i_1,\dotsc,i_d)}
  =
  i_\beta\pol_{(_1,\dotsc,j_d)},
\]
where \(j_\alpha=i_\alpha+1\), \(j_\beta=i_\beta-1\) and \(j_\gamma=i_\gamma\) for other indices \(\gamma\).

%One infers that the linear map
%\[
%  \R[\sigma_{\alpha\beta}]
%  \to
%  \langle\pol_{(I)},\abs{I}=d\rangle
%  \qcomma
%  Q(\sigma_{\alpha\beta})
%  \mapsto
%  Q(\sigma_{\alpha\beta})\cdot\pol_d
%\]
%is surjective.
\begin{exam}
  For \(d=3\), one has:
  \[
    \pol_{(3,0,0)}
    =
    \sigma_{12}\sigma_{13}\pol_{(1,1,1)}
    \qand
    \pol_{(2,0,1)}
    =
    \sigma_{12}\pol_{(1,1,1)}.
  \]
\end{exam}

\begin{rema}
  For \(\alpha=\beta\), the operation of \(\sigma_{\alpha\alpha}\) is just a rescaling (by the homogeneous degree in \(\vb*{v}_\alpha\)). 
  %We will from now on call these operators \(\sigma_{\alpha\alpha}\) \textsl{scaling operators} and reserve the terminology \textsl{polarization operator} for \(\sigma_{\alpha\beta}\) with \(\alpha\neq\beta\).
\end{rema}

\begin{rema}
  Let \(F\) be a multilinear form. Let \(\Delta_I\) be the set of vectors
  \[
    (\vb*{v}_1,\stackrel{i_1}\dotsc,\vb*{v}_{1},\dotsc,\vb*{v}_d,\stackrel{i_d}\dotsc,\vb*{v}_d).
  \]
  Evaluating \(F\) on \(\Delta_I\), we obtain a multi-homogeneous polynomial in \(\R[V^d]\) that we have denoted \(\rest_I(F)\).
  One has then
  \[
    \rest_I(\pol(f))
    =
    \pol_{(I)}(f).
  \]
  This is easily checked on pure powers.
  So the partial polarization can be obtained either from \(f\) by polarization or from the complete polarization \(F=\pol(f)\) by restriction to the multi-diagonal.
\end{rema}

\begin{exam}
  Let us follow up on our example cubic form
  \(f(\vb*{v})=3v^1v^1v^3\) on \(\R^3\).
  Its complete polarization is the symmetric trilinear form
  \[
    \pol_{(1,1,1)}(f)(\vb*{v}_1,\vb*{v}_2,\vb*{v}_3)
    =
    v_1^1v_2^1v_3^3
    +
    v_1^1v_2^3v_3^1
    +
    v_1^3v_2^1v_3^1.
  \]
  Then
  \[
    \pol_{(2,0,1)}(f)(\vb*{v}_1,\vb*{v}_2,\vb*{v}_3)
    =
    2v_1^1v_1^3v_3^1
    +
    v_1^1v_1^1v_3^3.
  \]
  and
  \[
    \pol_{(3,0,0)}(f)(\vb*{v}_1,\vb*{v}_2,\vb*{v}_3)
    =
    3v_1^1v_1^1v_1^3. 
  \]
\end{exam}

The relationships among the various processes are summarized in the following commutative diagram, for \(d=i_1+\dotsb+i_d\).
\[
  \begin{tikzcd}
&\R[V]_d
\ar[ldd,bend right=10,shorten >=10pt,swap,"\pol_{(i_1,\dotsc,i_d)}"]
\ar[rdd,bend left=10,"\pol_d"]
\ar[rrdd,bend left=10,"\pol_{(1,\dotsc,1)}"]
&
\\\\
\R[V]_{i_1}\otimes\dotsb\otimes\R[V]_{i_d}
%\ar[uur,hook,bend right=5]
\ar[rdd,swap,"\pol_{i_1,\dotsc,i_d}"]
&&
\Sym^d(V^*)
\ar[uul,bend left=5,"\rest_d"]
\ar[ll,"\rest_{i_1,\dotsc,i_d}"]
\ar[r,hook]
&
{\R[V]_1}^{\otimes d}.
\\\\
&
\Sym^{i_1}(V^*)\otimes\dotsb\otimes\Sym^{i_d}(V^*)
\ar[uur,hook]
  \end{tikzcd}
\]

In particular, one can read the interesting identity, for every integer partition \(d=i_1+\dotsb+i_d\):
\[
  \pol_d
  =
  \pol_{i_1,\dotsc,i_d}\circ\pol_{(i_1,\dotsc,i_d)}.
\]

%\begin{rema}
%From this diagram, one infers that for any multilinear form \(F\), the partial restitution \(\rest_{I}(F)\) is multi-homogeneous in \(d\) variables. We can apply full polarization to each factor as in the end of the previous section, in order to obtain a multi-symmetric multilinear form \((\pol_I\circ\rest_I)(F)\). It is equal to \(F\) if and only if \(F\) was already multi-symmetric.
%\end{rema}

\section{Polarization as an infinitesimal action.}
We will now present a second way to describe the polarization operators, as the generators of the infinitesimal action of a general linear group, as exposed for instance in~\cite{KP96}.
We have already seen that, given coordinates on \(V\), elements of \(V^m\) are identified with \((n\times m)\)-matrices \(\vb{M}\).
The diagonal action of the group \(G=\GL(n)\) on \(\vb{M}\) is by the matrix product \(g\ast \vb{M} = g\vb{M}\).
There is another natural linear action of the general linear group \(\Gamma=\GL(m)\) given by the matrix product \(\gamma\ast \vb{M}\bydef \vb{M}\gamma^{-1}\). It is an operation on the columns of \(\vb{M}\).
This action induces a rational representation of \(\Gamma\) on the vector space \(\R[V^m]\) of multivariate polynomials in vectors, which preserves the total degree, given by
\[
  (\gamma\ast f)(\vb{M})\bydef f(\vb{M}\gamma).
\]
\begin{exam}
  For \(m=2\):
  \[
    \qq*{if} \gamma=\begin{pmatrix}1&2\\3&1\end{pmatrix}
    \qcomma
    \gamma\ast f(\vb*{v}_1,\vb*{v}_2)
    =
    f(\vb*{v}_1+3\vb*{v}_2,2\vb*{v}_1+\vb*{v}_2).
  \]
\end{exam}

Let \(E\) be a differentiable \(\Gamma\)-representation.
For a \(1\)-parameter subgroup \(\gamma(s)\) of \(\Gamma\) parametrized by \(s\in(\R,+)\), we consider the following linear differential operator on \(E\):
\[
  L_\gamma
  \colon
  x
  \mapsto
  \partial_s\left(\gamma(s)\ast x\right)\vert_{s=0}
\]
(in more sophisticated terms, we study the infinitesimal action of \(\Gamma\)).
\begin{rema}
  Since the actions of \(G\) and \(\Gamma\) commute (by associativity of the matrix product), the operators \(L_\gamma\) commute with the \(G\)-action.
\end{rema}
Recall that for \(x\in E\) the parametrized curve \(s\mapsto x(s)\bydef \gamma(s)\ast x\) is called the \textsl{flow} of \(L_\gamma\) at \(x\). It is the unique (global) solution to the Cauchy problem
\[
  \left\lbrace
    \begin{aligned}
      x(0)&=x;
      \\
      x'(s)&=L_\gamma(x(s)).
    \end{aligned}
  \right.
\]

We will be particularly interested in the following \(n^2\) subgroups:
\begin{itemize}
  \item
    for \(\alpha\neq\beta\), the \(1\)-parameter subgroup
    \[
      U_{\alpha\beta}
      \bydef
      \Set{I+s E_{\alpha\beta},s\in\R},
    \]
    where we denote by \((E_{\alpha\beta})\) the canonical basis of the vector space of \((m\times m)\)-matrices.
  \item
    for \(\alpha=\beta\), the \(1\)-parameter subgroup
    \[
      T_{\alpha}
      \bydef
      \Set{\diag(1,\dotsc,1,e^s,1,\dotsc,1),s\in \R},
    \]
    where \(e^s\) occupies the \(\alpha\)th diagonal entry.
\end{itemize}
The subgroups \(U_{\alpha\beta}\) with \(\alpha<\beta\) and \(\beta<\alpha\) generate respectively the subgroups of upper and lower unitriangular matrices.
The subgroups \(T_{\alpha}\) generate the subgroup of positive diagonal matrices.
By LU-decomposition, the subgroups of upper unitriangular, lower unitriangular, and positive diagonal matrices generate the connected subgroup
\(
\Gamma^+
=
\Set{\gamma\in\Gamma\colon \det(\gamma)>0}
\).

\begin{exam}
  We take \(E=\R[V^m]\).
  The flow of \(U_{\alpha\beta}\) is given by
  \[
    f
    \mapsto
    f(\vb*{v}_1,\dotsc,\vb*{v}_\beta+s\vb*{v}_\alpha,\dotsc,\vb*{v}_m).
  \]
  Differentiating in order to compute \(L_\gamma\), we retrieve the polarization operator \(L_\gamma=\sigma_{\alpha\beta}\).
  The flow of \(T_\alpha\) is given by
  \[
    f
    \mapsto
    f(\vb*{v}_1,\dotsc,e^s \vb*{v}_\alpha,\dotsc,\vb*{v}_m).
  \]
  Differentiating in order to compute \(L_\gamma\), we retrieve the scaling operator \(L_\gamma=\sigma_{\alpha\alpha}\).
\end{exam}

We will now link the concepts of polarization and of subrepresentation.
\begin{theo}
  Let \(E\) be a rational \(\Gamma\)-representation.
  A subspace \(F\) of \(E\) is stable under the \(\Gamma\)-action if and only if it is stable under the infinitesimal \(\Gamma\)-action.
\end{theo}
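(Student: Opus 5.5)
The plan is to reduce everything to finite-dimensional linear algebra, where the flow of \(L_\gamma\) is a matrix exponential. After that, I would pass from the connected group \(\Gamma^+\) to the whole of \(\Gamma=\GL(m)\) by a density argument. I will use the fact that a rational representation is locally finite: every \(x\in E\) lies in a finite-dimensional \(\Gamma\)-stable subspace \(W_x\). For \(E=\R[V^m]\) this is clear, since the action preserves the total degree, and one may take \(W_x\) to be a space of polynomials of bounded degree. In general I would take \(W_x=\langle\Gamma\ast\{x\}\rangle\), which is finite-dimensional because the matrix coefficients of \(\gamma\mapsto\gamma\ast x\) lie in the finitely generated space of regular functions with bounded numerator degree and bounded power of \(\det^{-1}\).

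\emph{Direct implication.} Suppose \(F\) is \(\Gamma\)-stable and \(x\in F\). The curve \(s\mapsto\gamma(s)\ast x\) lies in \(F\cap W_x\). This is a subspace of a finite-dimensional space, hence closed, so the derivative at \(s=0\) also lies in it. Thus \(L_\gamma(x)\in F\).

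\emph{Converse.} Suppose \(F\) is stable under every \(L_\gamma\), and in particular under \(\sigma_{\alpha\beta}\) and \(\sigma_{\alpha\alpha}\) (which are \(L_\gamma\) for \(\gamma=U_{\alpha\beta},T_\alpha\), by the preceding example). Let \(x\in F\). On the finite-dimensional space \(W_x\), the operator \(L_\gamma\) restricts to a linear map. By uniqueness for the Cauchy problem, the flow is \(\gamma(s)\ast x=\exp(sL_\gamma)x=\sum_k s^kL_\gamma^k x/k!\). Each partial sum lies in the finite-dimensional subspace \(F\cap W_x\), which is closed, so the limit \(\gamma(s)\ast x\) lies in \(F\) as well. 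Hence \(F\) is stable under the subgroups \(U_{\alpha\beta}\) and \(T_\alpha\). By the LU-decomposition remark, these generate \(\Gamma^+\), so \(F\) is \(\Gamma^+\)-stable.

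\emph{Main obstacle.} It remains to pass from \(\Gamma^+\) to \(\Gamma\), and this is where rationality is essential, since the infinitesimal action only sees the identity component. I would first choose a complement \(W_x=(F\cap W_x)\oplus C\) and let \(\pi\) be the projection onto \(C\). Then each coordinate of \(\gamma\mapsto\pi(\gamma\ast x)\) is a regular function on \(\GL(m)\), of the form \(P(\gamma)/\det(\gamma)^k\) with \(P\in\R[\operatorname{M}_m]\). This function vanishes on \(\Gamma^+\), so \(P\) vanishes on the nonempty open subset \(\{\det>0\}\) of \(\operatorname{M}_m\). A polynomial vanishing on a nonempty open set is identically zero, so \(\pi(\gamma\ast x)=0\) for all \(\gamma\in\Gamma\), that is, \(\gamma\ast x\in F\). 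This shows that \(F\) is \(\Gamma\)-stable.
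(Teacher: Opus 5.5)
Your proof is correct and follows the same route as the paper. The direct implication uses closedness of \(F\). The converse uses the flow (Cauchy problem) of the one-parameter subgroups \(U_{\alpha\beta}\), \(T_\alpha\) to get \(\Gamma^+\)-stability, and then a regular function vanishing on the open set \(\{\det>0\}\) to reach all of \(\Gamma\). The only difference is that you work inside finite-dimensional \(\Gamma\)-stable subspaces \(W_x\), which makes the closedness and exponential arguments rigorous for infinite-dimensional \(E\) such as \(\R[V^m]\); the paper instead tacitly takes \(E\) finite-dimensional by choosing a basis adapted to \(F\).
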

\begin{proof}
  If \(F\) is stable under the \(\Gamma\)-action, it is clearly stable under the infinitesimal \(\Gamma\)-action, because \(F\) is closed in \(E\).

  If \(F\) is stable under the infinitesimal \(\Gamma\)-action, the flow of the infinitesimal action preserves \(F\) as well, because it is the solution of a Cauchy problem in \(F\).
  Therefore \(F\) is stable under \(\Gamma^+\).
  We can assume that there is a basis \((\vb*{e}_1,\dotsc,\vb*{e}_n)\) of \(E\) such that \(F\) is spanned by \((\vb*{e}_1,\dotsc,\vb*{e}_r)\). 
  Let \(\rho\colon\Gamma\to\End(E)\) be the linear action on \(E\).
  For any \(\gamma\in\Gamma^+\) and any \(x\in F\):
  \[
    \forall i>r\qcomma
    (\rho(\gamma,x))_i=0.
  \]
  This is a polynomial vanishing on a non-empty open set of \(\Gamma\) [up to clearing the denominator].
  But a polynomial vanishing on a non-empty open set is zero, because its Taylor expansion is zero in any point of the open set.
  Hence \(F\) is \(\Gamma\)-stable.
\end{proof}

As a special case, we obtain that a subspace of \(\R[V^m]\) is a sub-\(\Gamma\)-representation if and only if it is stable under the action of all polarization operators \(\sigma_{\alpha\beta}\) (possibly with \(\alpha=\beta\)).
For a subset \(S\in\R[V^m]\), we define the \textsl{polarization} of \(S\) to be the smaller vector subspace containing \(S\) and stable under the action of all polarization operators. In other words it the smaller \(\Gamma\)-subrepresentation \(\langle\Gamma\ast S\rangle\) containing \(S\).

\begin{rema}
  Note that Theorem~\ref{theo:pure_powers} states that, in one vector variable (\(m=1\))
  \[
    \R[V]_d
    =
    \langle G\ast \ell^d\rangle,
  \]
  for \(G=\GL(V)\) and \(\ell\) any non-zero linear form.
  This is another ``polarization'' result (cf. the ``polarization formulas'' of Example~\ref{exam:pure_powers}).
  It is another similar but different concept that the one that we have described in this section. 
  For \(m\geq 1\) there is actually two possible polarizations. The group \(G\) acts on components (lines of \(\vb{M}\)), and the group \(\Gamma\) acts on vectors (columns of \(\vb{M}\)).
  In this text, ``polarization'' will \emph{never} refer to an action of \(G=\GL(V)\), but always to an action of \(\Gamma=\GL(m)\).
\end{rema}

\part{Polynomial invariants.}
\section{Weyl's Polarization Theorem.}
As outlined in the previous section, Weyl’s approach reformulate Classical Invariant Theory in terms of multilinear algebra using a natural action of the general linear group \(\Gamma=\GL(m)\).
By associativity of the matrix product, the action \(\rho\colon G\to\GL(V)\) and the action of \(\Gamma=\GL(m)\) on \(V^m\) commute, and the same holds for the induced actions on  the coordinate ring \(\R[V^m]\). Hence the invariant subring \(\R[V^m]^G\) is also a \(\Gamma\)-representation.

For \(p<m\), there is an injective homomorphism \(\GL(p)\hookrightarrow\GL(m)\), obtained by taking the identity on the standard supplement of \(\R^p\) in \(\R^m\). Hence, every \(\GL(m)\)-representation \(\GL(m)\to\GL(W)\) can be seen by restriction as a representation \(\GL(p)\to\GL(W)\).
There is also a standard injection \(V^p\hookrightarrow V^m\) (by completing vectors with zeros), and it commutes with the \(\GL(V)\)-actions.
One has therefore
\begin{equation}
  \label{eq:restriction}
  \R[V^p]^{G}
  =
  \R[V^p]\cap \R[V^m]^{G},
\end{equation}
as \(\GL(p)\)-representations.
In other words, a polynomial \(G\)-invariant of \(V^p\) is a polynomial \(G\)-invariant of \(V^m\) that depends only on the \(p\) first vector variables.
For any \(p < m\), the ring of \(p\)-ary \(G\)-invariant forms can thus be recovered from the ring of \(m\)-ary \(G\)-invariant forms.

Since \(\R[V^m]^G\) is a \(\Gamma\)-representation, one has even
\begin{equation}
  \label{eq:pol}
  \langle\Gamma\ast\R[V^p]^{G}\rangle
  \subseteq
  \R[V^m]^{G}.
\end{equation}
In an attempt to construct a tractable basis of invariants forms, one naturally wonders:
\begin{quote}
  \itshape
  How many variables (if finite) are needed to capture all irreducible invariants? In other words, does \eqref{eq:pol} becomes an equality for large \(p\), and if yes for what value of \(p\)? 
\end{quote}

A fundamental result of Weyl, shows that \eqref{eq:pol} is an equality for \(p\geq\dim(V)\).
In particular,
\[
  \R[V^m]^{G}
  =
  \langle\Gamma\ast\R[V^n]^{G}\rangle
  \qcomma\qfor m\geq n.
\]
The main ingredient of the proof is column-wise Gaussian elimination.
We denote by \(U_m\) the subgroup of the general linear group \(\Gamma=\GL(m)\) consisting of upper triangular matrices with ones on the diagonal (\textsl{upper unitriangular} matrices).
It is precisely the group of column-wise Gaussian elimination.

An \(m\)-ary form \(f\in\R[V^m]\) that is simultaneously invariant under the diagonal action of \(G\) on \(V^m\) and under the action of \(U_m\subset\Gamma\) will be called a \textsl{bi-invariant}.
Note that since the actions of \(G\) and \(\Gamma\) commute, one has
\[
  \R[V^m]^{G\times U_m}
  =
  (\R[V^m]^{G})^{U_m}
  =
  (\R[V^m]^{U_m})^{G}.
\]

\begin{THEO}
  [Weyl's Polarization Theorem]
  \label{theo:polarization}
  Let \(V\) be a \(n\)-dimensional rational representation of a linear group \(G\).
  \begin{enumerate}
    \item
      All \(G\)-invariant \(m\)-ary forms can be recovered from \((G\times U_m)\)-invariant \(m\)-ary forms by polarization:
      \[
	\R[V^m]^G
	=
	\langle\Gamma\ast\R[V^m]^{(G\times U_m)}\rangle.
      \]
    \item
      For any \(m\), the \(U_m\)-invariant \(m\)-ary forms can be obtained from \(U_n\)-invariant \(n\)-ary forms as 
      \[
	\R[V^m]^{U_m}
	=
	\R[V^m]
	\cap
	\R[V^n]^{U_n}.
      \]
    \item
      As a consequence, the \(G\)-invariant \(m\)-ary forms can be recovered from the \(G\)-invariant \(n\)-ary forms by polarization if \(m\geq n\) as
      \[
	\R[V^m]^G
	=
	\langle \Gamma\ast \R[V^n]^G\rangle,
      \]
      and by restriction if \(m\leq n\).
  \end{enumerate}
\end{THEO}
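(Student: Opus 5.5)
The plan is to prove the three items in order, with item (1) carrying most of the weight. For item (1), the inclusion \(\supseteq\) is clear: \(\R[V^m]^G\) is a \(\Gamma\)-representation containing \(\R[V^m]^{G\times U_m}\). For \(\subseteq\), I work degree by degree, since each homogeneous component \(E_d\bydef\R[V^m]^G_d\) is a finite-dimensional rational (indeed polynomial) \(\Gamma\)-representation. Set
\[
  W_d\bydef\langle\Gamma\ast (E_d)^{U_m}\rangle\subseteq E_d.
\]
I intend to use two facts about \(\Gamma=\GL(m)\).
\begin{enumerate}
  \item Complete reducibility of polynomial \(\GL(m)\)-representations. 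Over \(\R\) I would justify this by Weyl's unitary trick: average an inner product over the compact group \(\GO(m)\), and pass from \(\GO(m)\)-stability to \(\Gamma\)-stability by a Zariski-density argument like the one in the proof that infinitesimal stability implies \(\Gamma\)-stability. If a cheaper route is needed, I would fall back on the Casimir operator built from the \(\sigma_{\alpha\beta}\).
  \item Every nonzero finite-dimensional \(\Gamma\)-subrepresentation contains a nonzero \(U_m\)-fixed vector. The infinitesimal generators of \(U_m\) are the \(\sigma_{\alpha\beta}\) with \(\alpha<\beta\). These are nilpotent on each \(E_d\), because they strictly raise a weight for the torus \(T_\alpha\). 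They span a nilpotent Lie algebra, so by Engel's theorem they have a common kernel vector. Such a vector is fixed by the flows, hence by \(U_m\).
\end{enumerate}
Then choose a \(\Gamma\)-stable complement \(W'_d\) of \(W_d\) in \(E_d\). If \(W'_d\neq 0\), fact 2 gives a nonzero \(U_m\)-invariant vector in \(W'_d\). But every \(U_m\)-invariant of \(E_d\) already lies in \(W_d\), so this vector lies in \(W_d\cap W'_d=0\), a contradiction. Hence \(E_d=W_d\).

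For item (2), I use the convention \((\gamma\ast f)(\vb{M})=f(\vb{M}\gamma)\). For \(\gamma\in U_m\), the \(j\)th column of \(\vb{M}\gamma\) is \(\vb*{v}_j+\sum_{i<j}\gamma_{ij}\vb*{v}_i\). Take \(f\in\R[V^m]^{U_m}\) with \(m\geq n\), and \(\vb{M}\) in the dense open set where \(\vb*{v}_1,\dotsc,\vb*{v}_n\) are linearly independent. Each \(\vb*{v}_j\) with \(j>n\) is then a combination of \(\vb*{v}_1,\dotsc,\vb*{v}_n\), so a suitable \(\gamma\in U_m\) (column-wise Gaussian elimination) sends these columns to \(0\). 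This gives
\[
  f(\vb{M})=f(\vb*{v}_1,\dotsc,\vb*{v}_n,0,\dotsc,0)
\]
on a dense set, hence everywhere by polynomiality. So \(f\) depends only on the first \(n\) vectors. It is \(U_n\)-invariant because \(U_n\hookrightarrow U_m\) acts on these first columns.

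Conversely, a \(U_n\)-invariant polynomial in the first \(n\) columns is \(U_m\)-invariant. The reason is that an upper unitriangular \(\gamma\) changes the first \(n\) columns only through its upper-left \(U_n\) block. When \(m<n\), the same statement holds through the inclusion \(V^m\hookrightarrow V^n\), which is compatible with \(U_m\hookrightarrow U_n\).

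Item (3) follows by combining these. For \(m\geq n\), item (1) and item (2) give
\[
  \R[V^m]^G=\langle\Gamma\ast\R[V^m]^{G\times U_m}\rangle\subseteq\langle\Gamma\ast\R[V^n]^{G}\rangle.
\]
The reverse inclusion is \eqref{eq:pol}. For \(m\leq n\), the statement is exactly \eqref{eq:restriction}. The main obstacle is item (1) over \(\R\), specifically complete reducibility, which is where I would spend the care. The unipotent fixed-vector step via nilpotency of the \(\sigma_{\alpha\beta}\) with \(\alpha<\beta\) is routine by comparison.
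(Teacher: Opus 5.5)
Your architecture matches the paper's. Item (1) follows degree by degree from two facts: complete reducibility of the finite-dimensional $\Gamma$-representations $E_d$, and the existence of a nonzero $U_m$-fixed vector in every nonzero subrepresentation. Items (2) and (3) use the same Gaussian elimination and bookkeeping as the paper. Your Engel argument for the fixed vector is correct and simpler than the paper's orbit-limit lemma: the $\sigma_{\alpha\beta}$ with $\alpha<\beta$ span a Lie algebra acting nilpotently on $E_d$, and a common null vector is fixed by the flows of the $U_{\alpha\beta}$, which generate $U_m$.

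The gap is in complete reducibility, the step you yourself single out, and your primary justification for it fails. $\GO(m)$ is a Zariski-closed proper subgroup of $\GL(m,\R)$, so no density argument passes from $\GO(m)$-stability to $\Gamma$-stability. Infinitesimally, $\mathfrak{so}(m)$ only supplies the operators $\sigma_{\alpha\beta}-\sigma_{\beta\alpha}$, never the individual $\sigma_{\alpha\beta}$ that the infinitesimal criterion requires. For instance, for $m\geq 2$ the form $\Trace(\vb{M}^*\vb{M})=\sum_\alpha\langle\vb*{v}_\alpha,\vb*{v}_\alpha\rangle$ spans a $\GO(m)$-stable line in $E_2$ (with $G=\GO(n)$) that is not $\Gamma$-stable. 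Likewise, the orthogonal complement of a subrepresentation for an inner product averaged over $\GO(m)$ is only guaranteed to be $\GO(m)$-stable, which is not enough. This is exactly the obstruction the paper flags ($\GL(m,\R)$ has no compact subgroup large enough for the unitary trick), and it is why the paper builds algebraic Reynolds operators in Appendix A.

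Your Casimir fallback can be made rigorous, but it is not cheap. On $E_d$ the Euler operator $\sum_\alpha\sigma_{\alpha\alpha}$ acts by the scalar $d$. You would then need the full Weyl complete-reducibility theorem for $\mathfrak{sl}(m,\R)$, followed by the paper's infinitesimal-stability theorem to return to $\Gamma$.

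There are other repairs:
\begin{enumerate}
  \item Complexify, use $U(m)$ (which is Zariski-dense in $\GL(m,\mathbb{C})$), and descend by averaging the complex equivariant projector with its complex conjugate.
  \item Most economically here, replace $\GO(m)$-invariance by the stronger property that the adjoint of $\rho(\gamma)$ is $\rho(\gamma^*)$. The Fischer inner product $\langle f,h\rangle=f(\partial)h$ on $\R[V^m]_d$ has this property, because the Frobenius adjoint of $\vb{M}\mapsto\vb{M}\gamma$ is $\vb{M}\mapsto\vb{M}\gamma^*$. Since $\gamma\mapsto\gamma^*$ is a bijection of $\Gamma$, the orthogonal complement of $W_d$ inside $E_d$ is then automatically $\Gamma$-stable.
\end{enumerate}
With either repair, the rest of your argument goes through unchanged.
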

\begin{proof}
  \begin{enumerate}[wide]
    \item
      Any finite-dimensional \(\Gamma=\GL(m)\)-representation \(W\) is generated by its \(U_m\)-invariants:
      \[
	W
	=
	\langle \Gamma\ast W^{U_m}\rangle.
      \]
      This fundamental result may be admitted on a first reading, and its proof is postponed to the appendix.

      Let \(W\bydef\R[V^m]^G\).
      It is not finite-dimensional, but \(W=\bigoplus W_d\), where
      \[
	W_{d}
	\bydef
	\big\langle f\in\R[V^m]^G,\deg(f)=d\big\rangle
      \]
      is finite-dimensional.
      Since the actions of \(G\) and of \(\Gamma\) commute and preserve the degree, \(W_d\) is a \(\Gamma\)-representation.
      We apply the general result above to this \(\Gamma\)-representation in order to obtain
      \[
	W_{d}
	=
	\langle\Gamma\ast{W_{d}}^{U_m}\rangle.
      \]
      Since \(\R[V^m]=W=\bigoplus W_d\), as \((G\times\Gamma)\)-representations, by summing
      \[
	\R[V^m]^G
	=
	\langle\Gamma\ast(\R[V^m]^G)^{U_m}\rangle
	=
	\langle\Gamma\ast\R[V^m]^{(G\times U_m)}\rangle.
      \]
    \item
      For \(m\leq n\), the subspace \(V^m\subseteq V^n\) is \(U_n\)-stable,
      and
      \[
	\R[V^m]^{U_m}
	=
	\R[V^m]^{U_n}
	=
	\R[V^m]\cap\R[V^n]^{U_n}.
      \]

      We now assume that \(m\geq n\).
      Since \(\R[V^n]^{U_n}\subseteq \R[V^m]\), we want to prove that
      \[
	\R[V^m]^{U_m}
	=
	\R[V^m]\cap\R[V^n]^{U_n}
	=
	\R[V^n]^{U_n},
      \]
      as subspaces of \(\R[V^m]\).
      Inverting the roles of \(m\) and of \(n\) in the previous reasoning the right hand side becomes
      \[
	\R[V^n]^{U_n}
	=
	\R[V^n]\cap\R[V^m]^{U_m}.
      \]
      It remains hence to prove that \(\R[V^m]^{U_m}=\R[V^m]^{U_m}\cap\R[V^n]\), in other words that 
      \[
	\R[V^m]^{U_m}\subseteq \R[V^n].
      \]

      An \(m\)-ary form is \(U_m\)-invariant if and only if it is constant on every \(U_m\)-orbit.
      We first restrict ourselves to the open dense subset \(\Omega\) of \(V^m\) consisting of elements \((\vb*{v}_1,\dotsc,\vb*{v}_m)\) such that \(\det(\vb*{v}_1,\dotsc,\vb*{v}_n)\neq 0\), that is the \(n\) first vectors are linearly independent.
      By Gaussian elimination, the point \((\vb*{v}_1,\dotsc,\vb*{v}_n,0,\dotsc,0)\) lies in the \(U_m\)-orbit of \((\vb*{v}_1,\dotsc,\vb*{v}_m)\).
      If \(f\) is a \(U_m\)-invariant \(m\)-ary form, we have that \(f(\vb*{v}_1,\dotsc,\vb*{v}_m)=f(\vb*{v}_1,\dotsc,\vb*{v}_n,0,\dotsc,0)\) on \(\Omega\). Since \(\Omega\) is dense, this polynomial identity is globally true.
      Hence any \(U_m\)-invariant \(m\)-ary form is in the subring \(\R[V^n]\) of \(n\)-ary forms.

    \item
      We are now in position to prove the last point.
      For \(m\leq n\), it is the content of \eqref{eq:restriction}.
      For \(m\geq n\), one infers from the first point that
      \[
	\R[V^m]^G
	=
	\langle \GL(m)\ast(\R[V^m]^{G})^{U_m}\rangle
	=
	\langle \GL(m)\ast(\R[V^m]^{U_m})^G\rangle,
      \]
      and similarly
      \[
	\R[V^n]^G
	=
	\langle \GL(n)\ast(\R[V^n]^{U_n})^G\rangle.
      \]
      From the second point, one infers that
      \[
	S
	\bydef
	(\R[V^n]^{U_n})^G
	=
	(\R[V^m]^{U_m})^G.
      \]
      Since \(\GL(n)\) is a subgroup of \(\GL(m)\), for any set \(S\in\R[V^m]\) one has
      \[
	\big\langle\GL(m)\ast\langle\GL(n)\ast S\rangle\big\rangle
	=
	\langle\GL(m)\ast S\rangle.
      \]
      Therefore
      \[
	\R[V^m]^G
	=
	\langle \GL(m)\ast S\rangle
	=
	\big\langle\GL(m)\ast\langle\GL(n)\ast S\rangle\big\rangle
	=
	\langle \GL(m)\ast \R[V^n]^G\rangle.
	\qedhere
      \]
  \end{enumerate}
\end{proof}

For \(\gamma_i\in\Gamma\), \(P\in\R[X_1,X_2,\dotsc]\), \(s_{ij}\in S\), one has
\[
  \sum_i \gamma_i \ast P_i(s_{ij})
  =
  \sum_i P_i(\gamma_i \ast s_{ij}).
  \qedhere
\]
Therefore, if \(W\) is a subalgebra of \(\R[V^m]\), that is algebraically generated by a set \(S\), then 
\(\langle \Gamma\ast W\rangle\) is algebraically generated by \(\Gamma\ast S\).
Moreover, the action of \(\Gamma\) on \(\R[V^m]\) preserves the degree.
One infers that \textit{if the \(G\)-invariant \(n\)-ary forms are algebraically generated by elements of degree \(\leq d\),
then it is the case for \(G\)-invariant forms in any number of variables.}

Theorem~\ref{theo:polarization} can be generalized in order to deal with joint invariant forms.
\begin{THEO'}{theo:polarization}
  Let \(V_1,\dotsc,V_k\) be rational finite-dimensional representations of dimensions \(n_1,\dotsc,n_k\) of a linear group \(G\).
  Then the joint invariant ring of \((m_1,\dotsc,m_k)\)-ary forms can be recovered by polarization and restriction from the \((G\times U_{n_1}\times\dotsb\times U_{n_k})\)-invariant \((n_1,\dotsc,n_k)\)-ary forms.
\end{THEO'}

\begin{rema}
  The polarization theorem applies to \emph{every} linear group.
  For instance every symmetric group (seen as a group of permutation matrices) is a linear group and more generally every finite group is a linear group (by the theorem of Cayley).
  In general linear subgroups can be quite wild: not necessarily ``reductive'', etc\ldots
\end{rema}

\section{Invariant forms for the general and special linear groups.}
The case of the standard representation \(V=\R^n\) of the group \(G=\GL(n)\) is not very interesting. Picking a basis, we identify \(V^n\) with \(\operatorname{M}_n\). The orbit of the matrix \(I\) is the open dense set \(\GL(n)\).
Hence every polynomial invariant on \(V^n\) is constant.

We will rather work out the example of the representation \(V^n\oplus(V^*)^n\). 
According to Theorem~\ref{theo:polarization}', we need to understand the \((\GL(n)\times U_n\times U_n)\)-invariants forms, which we call \textsl{tri-invariant forms}.

We form the matrix \(\vb{M}\) which columns are the coordinates of the vectors \(\vb*{v}_1,\dotsc,\vb*{v}_n\) and the matrix \(\vb{A}=\Mat(f^1,\dotsc,f^n)\) which rows are the coordinates of the covectors \(f^1,\dotsc,f^n\).
Then we consider the standard diagonal action 
\[
  g\ast(\vb{M},\vb{A})=(g\vb{M},\vb{A}g^{-1})
\]
of the group \(G=\GL(n)\), the standard columnwise action 
\[
  \gamma\ast \vb{M}=\vb{M}\gamma^{-1}
\]
of the group \(\Gamma=\GL(\dim(V))=\GL(n)\), and the ``flipped'' rowwise action 
\[
  \gamma'\ast \vb{A}=J\gamma' J\vb{A}
\]
of the group \(\Gamma'=\GL(\dim(V^*))=\GL(n)\), where \(J\) is the exchange matrix, i.e. the permutation matrix with ones on the antidiagonal.
We make this last choice so that the upper unitriangular group \(U_n\in\GL(n)\) actually acts on \(\vb{A}\) by multiplication with lower unitriangular matrices.
These three actions commute.

We will soon restrict to an open dense set of \(V^n\oplus(V^*)^n\), which will allow us to define a normal form in every \((\GL(n)\times U_n\times U_n)\)-orbit.
Recall that the leading minor \(\Delta_i(\vb{M})\) of a square matrix \(\vb{M}\) is the determinant of the submatrix formed of the \(i\) first rows and of the \(i\) first columns of \(\vb{M}\).
\begin{theo}[LU decomposition]
  Let \(\vb{M}\in\GL(n)\) be a matrix, all the leading minors of which are non-zero.
  Then there exists a unique decomposition \(\vb{M}=\gamma'D\gamma\) where \(\gamma'\) is lower unitriangular, \(\gamma\) is upper unitriangular, and \(D\) is diagonal.
  Moreover the diagonal coefficient \(\lambda_i\) of \(D\) is the ratio \(\Delta_i(\vb{M})/\Delta_{i-1}(\vb{M})\).
\end{theo}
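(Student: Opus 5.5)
I would prove existence, the formula for the diagonal coefficients, and uniqueness separately, with existence by induction on \(n\) using a block decomposition.

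For existence, write
\[
  \vb{M}=\begin{pmatrix}\vb{M}'&b\\c&d\end{pmatrix},
\]
where \(\vb{M}'\) is the leading \((n-1)\times(n-1)\) block. All leading minors of \(\vb{M}'\) are leading minors of \(\vb{M}\), hence non-zero. By induction, \(\vb{M}'=\gamma_1'D_1\gamma_1\), and \(\gamma_1'\), \(D_1\), \(\gamma_1\) are invertible. I then look for a factorization of the form
\[
  \gamma'=\begin{pmatrix}\gamma_1'&0\\x&1\end{pmatrix},\qquad
  D=\begin{pmatrix}D_1&0\\0&\lambda\end{pmatrix},\qquad
  \gamma=\begin{pmatrix}\gamma_1&y\\0&1\end{pmatrix}.
\]
Multiplying out gives the conditions \(\gamma_1'D_1y=b\), \(xD_1\gamma_1=c\) and \(xD_1y+\lambda=d\). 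These are solved uniquely by
\[
  y=D_1^{-1}\gamma_1'^{-1}b,\qquad
  x=c\,\gamma_1^{-1}D_1^{-1},\qquad
  \lambda=d-xD_1y.
\]

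For the diagonal coefficients, note that a leading submatrix of \(\gamma'D\gamma\) equals the product of the corresponding leading blocks, because \(\gamma'\) is lower triangular and \(\gamma\) is upper triangular. The unitriangular blocks have determinant \(1\), so \(\Delta_i(\vb{M})=\lambda_1\dotsm\lambda_i\). Dividing consecutive minors gives \(\lambda_i=\Delta_i(\vb{M})/\Delta_{i-1}(\vb{M})\), with the convention \(\Delta_0=1\). In particular every \(\lambda_i\) is non-zero, which is consistent with \(\vb{M}\) being invertible.

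For uniqueness, suppose \(\gamma_1'D_1\gamma_1=\gamma_2'D_2\gamma_2\). The diagonal matrices are invertible by the formula above, so
\[
  \gamma_2'^{-1}\gamma_1'=D_2\gamma_2\gamma_1^{-1}D_1^{-1}.
\]
The left side is lower unitriangular and the right side is upper triangular, so both equal a diagonal matrix with ones on the diagonal, namely \(I\). Hence \(\gamma_1'=\gamma_2'\), and then \(D_2\gamma_2\gamma_1^{-1}=D_1\). Comparing the diagonals gives \(D_1=D_2\), and cancelling the invertible \(D_1\) gives \(\gamma_1=\gamma_2\).

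The step I expect to require the most care is the block computation in the induction: one must make sure that the inverses used exist, which they do because the leading minors are non-zero and the inductive factors are therefore invertible. Everything else is routine bookkeeping.
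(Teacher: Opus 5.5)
Your proof is correct. Uniqueness is the same argument as in the paper: a matrix that is both lower unitriangular and upper triangular must be the identity. Existence, however, follows a different route.

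The paper runs Gaussian elimination without row exchanges and then factors out the diagonal. You instead build the factors by induction on the leading \((n-1)\times(n-1)\) block, with explicit bordering formulas for \(x\), \(y\) and \(\lambda\).

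Elimination is the familiar algorithm, but the paper's sketch leaves implicit why a zero pivot never occurs. Justifying this requires noting that adding multiples of earlier rows to later rows preserves every leading minor. Hence the \(i\)th pivot equals \(\Delta_i(\vb{M})/\Delta_{i-1}(\vb{M})\neq 0\).

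Your induction shows directly where the hypothesis enters: it is exactly what makes \(\vb{M}'\) and \(D_1\) invertible, so the block equations can be solved. Your leading-block observation \(\Delta_i(\vb{M})=\lambda_1\dotsm\lambda_i\) also proves the formula for the diagonal coefficients, which the paper states but never proves. Since that identity holds for any such factorization, it already gives \(D_1=D_2\) in your uniqueness step.
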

\begin{proof}
  We apply Gaussian elimination (without row exchanges) to reduce \(\vb{M}\) to an upper-triangular matrix, then we factorize the diagonal.
  Uniqueness follows from the observation that the only matrices that are both upper and lower triangular are diagonal.
\end{proof}

\begin{theo}\label{theo:triinv}
  The tri-invariant forms on \(V^n\oplus(V^*)^n\)
  under the action of \(\GL(n)\)
  are generated by the determinants
  \[
    D_k(\vb*{v}_1,\dotsc,\vb*{v}_n;f^1,\dotsc,f^n)
    \bydef
    \begin{vmatrix}
      f^1(\vb*{v}_1)&\dotso&f^1(\vb*{v}_k)\\
      \\
      f^k(\vb*{v}_1)&\dotso&f^k(\vb*{v}_k)
    \end{vmatrix}.
  \]
  There is thus a basis of \(\GL(n)\)-invariant of degree at most \(2n\).
\end{theo}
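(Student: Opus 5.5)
The plan is to reduce everything to the matrix \(\vb{P}\bydef\vb{A}\vb{M}\), whose entries are \(f^i(\vb*{v}_j)\), and then to use the LU decomposition on an open dense set to pin down a normal form in each orbit.

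First I check that the \(D_k\) are tri-invariant. The product \(\vb{A}\vb{M}\) is unchanged by \(g\ast(\vb{M},\vb{A})=(g\vb{M},\vb{A}g^{-1})\). Under \(U_n\times U_n\) it is transformed as \(\vb{P}\mapsto L\vb{P}U\), with \(L\) lower unitriangular and \(U\) upper unitriangular. The \(k\)th leading minor \(D_k=\Delta_k(\vb{P})\) is preserved under such a transformation. Indeed, the leading \(k\times k\) block of \(L\vb{P}U\) equals \(L_k\vb{P}_kU_k\), where \(L_k,\vb{P}_k,U_k\) are the leading blocks, because \(L\) is lower and \(U\) is upper triangular; and \(\det L_k=\det U_k=1\).

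Conversely, let \(F\) be a tri-invariant form. I restrict it to the open dense set \(\Omega\) where \(\vb{M}\in\GL(n)\) and all leading minors of \(\vb{A}\vb{M}\) are non-zero. On \(\Omega\), using \(g=\vb{M}^{-1}\in\GL(n)\), I move \((\vb{M},\vb{A})\) to \((I,\vb{A}\vb{M})=(I,\vb{P})\). By the LU decomposition, \(\vb{P}=\gamma'D\gamma\). Applying the \(U_n\)-action on the covectors, I remove \(\gamma'\), giving \(\vb{A}'=D\gamma\). I then use the \(U_n\)-action on the vectors combined with a compensating \(g\): the pair \((I,D\gamma)\) is sent to \((\gamma^{-1},D\gamma)\), and then \(g=\gamma\) sends it to \((I,D)\). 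I must check that the conventions make \(\gamma^{\pm1}\) upper unitriangular in the correct position, but this is routine. Hence \(F(\vb{M},\vb{A})=F(I,D)\), where \(D=\diag(\lambda_1,\dotsc,\lambda_n)\) with \(\lambda_i=D_i/D_{i-1}\). So on \(\Omega\),
\[
  F=\phi\Big(D_1,\tfrac{D_2}{D_1},\dotsc,\tfrac{D_n}{D_{n-1}}\Big)
\]
for the polynomial \(\phi(\lambda)=F(I,\diag(\lambda))\). By density, \(F\) is therefore a Laurent polynomial expression in \(D_1,\dotsc,D_n\) that is globally a polynomial.

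The main obstacle is to show that \(F\) is in fact a polynomial in the \(D_k\), and not merely a rational function of them. My approach is to write \(\phi\) in terms of the monomials \(\lambda^a\), so that \(\lambda^a=\prod_k D_k^{a_k-a_{k+1}}\) (with \(a_{n+1}=0\)). The \(D_k\) are algebraically independent: on the diagonal slice \(\vb{M}=I\), \(\vb{A}=\diag(\lambda)\) they equal \(\lambda_1\dotsm\lambda_k\), which are independent. Hence distinct exponent vectors give distinct Laurent monomials, and it suffices to show that every \(a\) occurring in \(\phi\) satisfies \(a_1\geq a_2\geq\dotsb\geq a_n\geq0\).

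For this I plan to use that each \(D_k\) is an irreducible polynomial. Then \(F\cdot\prod_k D_k^{N}\), for large \(N\), is a genuine polynomial identity in the polynomial ring, and unique factorization forces all negative exponents of \(D_k\) to cancel. Irreducibility of \(D_k\) follows because it is the determinant of a generic \(k\times k\) product \(\vb{A}_k\vb{M}_k\)-type minor; alternatively I would specialize the vectors and covectors so that \(D_k\) becomes a generic determinant, which is irreducible. With this, \(F\in\R[D_1,\dotsc,D_n]\).

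Finally, \(D_k\) has degree \(2k\le 2n\), which gives the stated degree bound.
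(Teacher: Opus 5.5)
Your normal-form reduction is correct; it is the paper's argument with the roles of \(\vb{M}\) and \(\vb{A}\) exchanged. Your proof that the \(D_k\) are algebraically independent, via the slice \(\vb{M}=I\), \(\vb{A}=\diag(\lambda)\), even supplies a point the paper only admits.

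The gap is in the step you yourself call the main obstacle. Irreducibility of \(D_k\) plus unique factorization does not stop negative exponents from surviving in a \emph{sum} of Laurent monomials in the \(D_j\). After clearing denominators and cancelling the common power of \(D_k\), factoriality only tells you that \(D_k\) divides \(H_0(D_1,\dotsc,\widehat{D_k},\dotsc,D_n)\) for some non-zero polynomial \(H_0\) in \(n-1\) variables. Nothing in your argument rules this out, and such cancellations do occur in general. In \(\R[x,y]\), \(p=x\) is irreducible and \(p\), \(q=xy+1\) are algebraically independent, yet \(qp^{-1}-p^{-1}=y\) is a polynomial.

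So you need one more ingredient, and either of two will do:
\begin{itemize}
\item Show that the \(D_j\), \(j\neq k\), stay algebraically independent on the hypersurface \(\{D_k=0\}\). For \(k<n\), take \(\vb{M}=I\) and \(\vb{A}\) diagonal except for an antidiagonal \(2\times2\) block in rows and columns \(k,k+1\).
\item Separate the terms by degree, which is what makes the paper's step ``treat every monomial of \(P\) separately'' legitimate. \(D_k\) is linear in each of \(f^1,\dotsc,f^k\) and does not involve the other covectors. Hence \(\lambda^a=\prod_k D_k^{a_k-a_{k+1}}\) is multi-homogeneous of multidegree \(a\) in \((f^1,\dotsc,f^n)\). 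The terms \(c_a\lambda^a\) are therefore exactly the multi-homogeneous components of \(F\), so each is a polynomial on its own. For a single Laurent monomial your factoriality argument does go through.
\end{itemize}

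Separately, your claim that every \(D_k\) is irreducible is false for \(k=n\), since \(D_n=\det(\vb{A})\det(\vb{M})\). This is harmless, because the exponent \(a_n\) of \(D_n\) is never negative. It does show that your suggested justification is invalid: a polynomial with an irreducible specialization need not be irreducible, since \(D_n\) specializes to the generic determinant \(\det(\vb{A})\) at \(\vb{M}=I\). For \(k<n\) irreducibility is true, but it needs an actual argument.
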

\begin{proof}
  Let \(P\) be a tri-invariant form.
  For any \(g\in\GL(n)\), any \(\gamma\in U_n\) and any \(\gamma'\in JU_nJ\), one has
  \[
    P(\vb{M},\vb{A})
    =
    P(g\vb{M}\gamma,\gamma' \vb{A}g^{-1}).
  \]
  We restrict to the open dense subset \(\Omega\) where \(\vb{A}\) is invertible and where all the leading minors of \(\vb{A}\vb{M}\) are non-zero.
  Then \(\vb{A}\vb{M}\) admits a LU decomposition \(\vb{A}\vb{M}=(\gamma')^{-1}D\gamma^{-1}\). Taking \(g=\gamma'\vb{A}\), one has:
  \[
    P(\vb{M},\vb{A})
    =
    P(\gamma'\vb{A}\vb{M}\gamma,I)
    =
    P(D,I).
  \]

  Since for every \(i\), \(\lambda_i=D_{i}/D_{i-1}\) is a rational bi-invariant, we can treat every monomial of \(P\) separately.
  We admit the algebraic independence of the leading minors \(D_k=\Delta_k(\vb{A}\vb{M})\) (their dependance on the parameters is triangular).
  The monomial 
  \[
    \lambda_1^{\alpha_1}\dotsm\lambda_n^{\alpha_n} = D_{1}^{\alpha_1-\alpha_2}\dotsm D_{m-1}^{\alpha_{m-1}-\alpha_m}D_m^{\alpha_m}
  \]
  is polynomial if and only if \(\alpha_1\geq\dotsb\geq\alpha_n\).

  Any tri-invariant form on \(\Omega\) is thus a polynomial in leading minors of \(\vb{A}\vb{M}\).
  This is globally true because a polynomial identity that holds on on a dense open subset holds on \(V^n\oplus (V^*)^n\).
\end{proof}

\begin{coro}\label{coro:SW}
  Every polynomial \(\GL(n)\)-invariant of vectors and covectors can be obtained by algebraic combinations and polarization from the duality bracket \(V\oplus V^*\to\R\).
\end{coro}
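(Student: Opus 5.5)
The plan is to combine the generalized polarization theorem (Theorem~\ref{theo:polarization}') with the explicit description of tri-invariants in Theorem~\ref{theo:triinv}. Fix numbers \(p\) of vectors and \(q\) of covectors, and consider \(\R[V^p\oplus(V^*)^q]^{\GL(n)}\). Since \(\dim V=\dim V^*=n\), Theorem~\ref{theo:polarization}' reduces the general case to \((n,n)\)-ary forms. It states that every joint invariant of \((p,q)\)-ary forms is obtained by polarization (when \(p\) or \(q\) exceeds \(n\)) and restriction (when it is smaller) from the \((\GL(n)\times U_n\times U_n)\)-invariant forms on \(V^n\oplus(V^*)^n\), i.e. from the tri-invariants. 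Here restriction means setting the extra vectors or covectors to zero and viewing \(\R[V^p]\) as a subring of \(\R[V^n]\). It is harmless, because the final generators will be pairings \(f^i(\vb*{v}_j)\), and setting some variables to zero either kills such a pairing or leaves it unchanged.

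Next I invoke Theorem~\ref{theo:triinv}: the tri-invariants form the polynomial algebra generated by the minors \(D_1,\dotsc,D_n\). Each \(D_k\) is a determinant of the matrix \((f^i(\vb*{v}_j))_{i,j\leq k}\). Expanding this determinant, \(D_k\) is a polynomial expression (sums of products) in the brackets \(\langle f^i,\vb*{v}_j\rangle=f^i(\vb*{v}_j)\). Moreover, each bracket \(f^i(\vb*{v}_j)\) is obtained from the single bracket \(f^1(\vb*{v}_1)\) by the polarization operators \(\sigma_{1j}\) acting on vector slots and the analogous operators acting on covector slots. Concretely, applying the substitution \(\vb*{v}_1\mapsto\vb*{v}_1+s\vb*{v}_j\) and differentiating at \(s=0\) turns \(f^1(\vb*{v}_1)\) into \(f^1(\vb*{v}_j)\). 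So every tri-invariant lies in the algebra generated by the \(\Gamma\times\Gamma'\)-orbit of the duality bracket \(V\oplus V^*\to\R\).

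Finally I transport algebra generation through polarization, using the remark following Theorem~\ref{theo:polarization}. That remark says that if \(W\) is algebraically generated by \(S\), then \(\langle\Gamma\ast W\rangle\) is algebraically generated by \(\Gamma\ast S\). It applies equally to \(\Gamma\times\Gamma'=\GL(p)\times\GL(q)\). Taking \(S\) to be the set of brackets \(f^i(\vb*{v}_j)\) with \(i,j\leq n\), the polarizations of brackets are again linear combinations of brackets \(f^a(\vb*{v}_b)\) with \(a\leq q\) and \(b\leq p\). Hence every invariant is a polynomial in these brackets, which proves the corollary.

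The main obstacle I expect is the bookkeeping in the first step. I have to check that Theorem~\ref{theo:polarization}' gives exactly the equality
\[
  \R[V^p\oplus(V^*)^q]^{\GL(n)}
  =
  \big\langle(\Gamma\times\Gamma')\ast\R[V^n\oplus(V^*)^n]^{\GL(n)\times U_n\times U_n}\big\rangle,
\]
with the flipped action of \(U_n\) on the covector rows matching the convention used in Theorem~\ref{theo:triinv}. If that convention were mismatched, the tri-invariants would be a different set of minors, namely trailing instead of leading ones. The conclusion would be unaffected, since any minor of \((f^i(\vb*{v}_j))\) is still a polynomial in brackets.
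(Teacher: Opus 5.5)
Your proposal is correct and follows the same route the paper intends: the corollary is stated there without a separate proof, as a direct consequence of Theorem~\ref{theo:triinv} and Theorem~\ref{theo:polarization}'. Your argument fills this in the way the paper does: the minors \(D_k\) are polynomials in the brackets \(f^i(\vb*{v}_j)\), each bracket is a polarization of \(f^1(\vb*{v}_1)\), and the remark after Theorem~\ref{theo:polarization} transports algebra generators through polarization. One small index slip: with the paper's convention \(\sigma_{\alpha\beta}=\sum_i v_\alpha^i\,\partial/\partial v_\beta^i\), the substitution \(\vb*{v}_1\mapsto\vb*{v}_1+s\vb*{v}_j\) corresponds to \(\sigma_{j1}\), not \(\sigma_{1j}\), though your concrete description of the operation is the right one.
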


\begin{theo}
  For \(n\geq 2\), the tri-invariant forms on \(V^n\oplus(V^*)^n\) under the standard action of \(\SL(n)\) are generated by \(D_1,\dotsc,D_{n-1}\) and the determinants.
  There is thus a basis of \(\SL(n)\)-invariant of degree at most \(2n-2\).
\end{theo}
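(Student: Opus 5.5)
Following the proof of Theorem~\ref{theo:triinv}, the idea is to bring a generic point into a normal form, but now \(g\) must have determinant one. On the open dense set \(\Omega\) where \(\vb{A}\) is invertible and all leading minors of \(\vb{A}\vb{M}\) are non-zero, write the LU decomposition \(\vb{A}\vb{M}=(\gamma')^{-1}D\gamma^{-1}\). We can no longer take \(g=\gamma'\vb{A}\), because \(\det(\gamma'\vb{A})=\det\vb{A}\) is arbitrary. Set \(a\bydef\det\vb{A}\) and \(E_a\bydef\diag(1,\dotsc,1,a)\), and take \(g\bydef E_a^{-1}\gamma'\vb{A}\in\SL(n)\). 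Then
\[
  P(\vb{M},\vb{A})
  =
  P(g\vb{M}\gamma,\gamma'\vb{A}g^{-1})
  =
  P(E_a^{-1}D,\;E_a).
\]
Thus, on \(\Omega\), a tri-invariant \(P\) is a polynomial in \(\lambda_1,\dotsc,\lambda_{n-1}\), \(\lambda_n/a\) and \(a\). These quantities are rational functions of the \(n+1\) invariants \(D_1,\dotsc,D_{n-1},D_n\) and \(a=\det(f^1,\dotsc,f^n)\). We also use \(b\bydef\det(\vb*{v}_1,\dotsc,\vb*{v}_n)\), which is \(\SL(n)\)- and \(U_n\)-invariant and satisfies \(D_n=ab\). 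Hence \(\lambda_n/a=b\cdot D_{n-1}^{-1}\), since \(\lambda_n=D_n/D_{n-1}\).

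Rewriting in these variables, \(P(E_a^{-1}D,E_a)\) becomes a Laurent-type expression
\[
  \sum c\,\lambda_1^{\alpha_1}\dotsm\lambda_{n-1}^{\alpha_{n-1}}\,(\lambda_n/a)^{\beta}\,a^{\delta},
\]
with nonnegative exponents. Substituting \(\lambda_i=D_i/D_{i-1}\) gives
\[
  D_1^{\alpha_1-\alpha_2}\dotsm D_{n-2}^{\alpha_{n-2}-\alpha_{n-1}}\,D_{n-1}^{\alpha_{n-1}-\beta}\,b^{\beta}a^{\delta}.
\]
Next I would admit, as in the proof of Theorem~\ref{theo:triinv}, that \(D_1,\dotsc,D_{n-1},a,b\) are algebraically independent on \(V^n\oplus(V^*)^n\). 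This is plausible because the dependence is triangular: \(a\) and \(b\) are free scalings of the last covector and vector, and this freedom is not fixed by \(D_1,\dotsc,D_{n-1}\). Granting it, such a monomial is a polynomial function exactly when \(\alpha_1\geq\dotsb\geq\alpha_{n-1}\geq\beta\). Hence every tri-invariant is, on \(\Omega\), a polynomial in \(D_1,\dotsc,D_{n-1},a,b\). By density the identity holds everywhere. Note that \(D_n=ab\) is redundant, which is where the degree bound \(2n-2\) for the \(D_k\) comes from. The determinants \(a,b\) have degree \(n\) and play the role of ``the determinants'' in the statement.

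The converse is immediate: \(D_k\) is \(\GL(n)\)-invariant by Theorem~\ref{theo:triinv}, and \(\det\vb{M}\) and \(\det\vb{A}\) are \(\SL(n)\)-invariant and invariant under unitriangular column and row operations.

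I expect the main obstacle to be the algebraic independence and the resulting polynomiality criterion, that is, deciding exactly which Laurent monomials are genuine polynomials. First I would try to verify the independence by exhibiting the explicit family \(\vb{M}=E_t D'\), \(\vb{A}=E_s\) with \(D'\) diagonal. On this family the \(n+1\) functions take arbitrary independent values, which proves the independence, and divisibility can be read off in the polynomial ring \(\R[D_1,\dotsc,D_{n-1},a,b]\). The hypothesis \(n\geq2\) ensures that \(E_a\) and \(D_{n-1}\) make sense as stated. For \(n=1\), \(D_1\) would coincide with \(ab\).
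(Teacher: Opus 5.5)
Your route is the paper's. You normalise a point of \(\Omega\) by LU decomposition with a determinant correction, then write \(P\) on \(\Omega\) as a Laurent polynomial in \(D_1,\dotsc,D_{n-1},\det\vb{A},\det\vb{M}\) using \(D_n=\det\vb{A}\det\vb{M}\), and conclude by independence and density. Your \(E_a=\diag(1,\dotsc,1,a)\) makes precise the paper's normal form \((\det(\vb{A})^{-1}D,\det(\vb{A}))\), and your exponent bookkeeping is right.

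The gap is in the last step. The paper only asserts this step, and the verification you plan would not establish it. Algebraic independence does not by itself imply that a Laurent polynomial \(L\) in \(D_1,\dotsc,D_{n-1},a,b\) which is a polynomial function has no negative exponents: \(x\) and \(xy\) are independent, yet \(xy/x\) is a polynomial. A criterion for single monomials does not settle sums either. What you need is that a polynomial \(Q(D_1,\dotsc,D_{n-1},a,b)\) is divisible by \(D_k\) only if \(Q\) already has \(D_k\) as a factor in the polynomial ring \(\R[D_1,\dotsc,D_{n-1},a,b]\).

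Your family \(\vb{M}=E_tD'\), \(\vb{A}=E_s\) does prove independence, since its image contains the open set \(D_1\dotsm D_{n-1}\neq0\). But it cannot detect divisibility, because it is exactly the normal-form slice. On it, \(\lambda_1,\dotsc,\lambda_{n-1},\lambda_n/a,a\) are the polynomial coordinates \(d_1,\dotsc,d_{n-1},td_n,s\), so every expression you produced restricts to a polynomial there. For instance \(b/D_{n-1}\) becomes \(td_n\), although it is not a polynomial on \(V^n\oplus(V^*)^n\). Nor do the values range freely: \(d_1=0\) forces \(D_k=0\) for all \(k<n\) and \(b=0\).

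A repair is a polynomial section on which the generators themselves are the coordinates.
\begin{itemize}
\item Take \(\vb{M}=\diag(1,\dotsc,1,z)\).
\item Let \(\vb{A}\) have ones on the superdiagonal, entry \((-1)^{k+1}x_k\) in position \((k,1)\) for \(k<n\), entry \((-1)^{n+1}y\) in position \((n,1)\), and zeros elsewhere.
\item Expanding each leading block along its last row gives \(D_k=x_k\) for \(k<n\), \(\det\vb{A}=y\) and \(\det\vb{M}=z\).
\item The point lies in \(\Omega\) whenever \(x_1\dotsm x_{n-1}yz\neq0\).
\end{itemize}
Composing \(P=L(D_1,\dotsc,D_{n-1},a,b)\), which holds on \(\Omega\), with this section shows that \(L(x,y,z)\) coincides with a polynomial on a dense set, hence is a polynomial. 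This gives independence and polynomiality at once, without treating monomials separately, and density then finishes as you say.
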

\begin{proof}
  We keep the notation of the previous proof.
  The normal form of \((\vb{M},\vb{A})\in\Omega\) in its \((\SL(n)\times U_n\times U_n)\)-orbit is \((\det(\vb{A})^{-1}D,\det(\vb{A}))\).
  Therefore, any tri-invariant form is a Laurent polynomial in \(D_1,\dotsc,D_{n},\det(\vb{A})\).
  One has \(D_n(\vb{M},\vb{A})=\det(\vb{A})\det(\vb{M})\), so we can replace \(D_n\) by \(\det(\vb{M})\).
  We conclude similarly as above by algebraic independence of \(D_1,D_2,\dotsc,D_{n-1},\det(\vb{A}),\det(\vb{M})\).
\end{proof}

\begin{coro}\label{coro:SLSW}
  Every polynomial \(\SL(n)\)-invariant of vectors and covectors can be obtained by algebraic combinations and polarization from the duality bracket \(V\oplus V^*\to\R\), the determinant of vectors \(V^n\to\R\) and the determinant of covectors \(V^{*n}\to\R\).
\end{coro}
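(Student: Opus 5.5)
The plan is to derive Corollary~\ref{coro:SLSW} from the previous theorem (the \(\SL(n)\) tri-invariants) combined with Theorem~\ref{theo:polarization}', in the same way Corollary~\ref{coro:SW} follows from Theorem~\ref{theo:triinv}.

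First, consider an arbitrary polynomial \(\SL(n)\)-invariant on \(V^{m_1}\oplus(V^*)^{m_2}\). By multi-homogeneous decomposition, which commutes with the \(\SL(n)\)-action, it suffices to treat multi-homogeneous invariants.

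\textbf{Step 1: adjust the number of arguments.} Enlarge \(m_1,m_2\) to \(\max(m_1,n)\) and \(\max(m_2,n)\) if necessary. By \eqref{eq:restriction}, an invariant in fewer variables is an invariant in more variables that happens not to depend on the extra ones. So we may assume \(m_1,m_2\geq n\). Any expression obtained in the larger setting can then be specialised back by setting the extra vectors to zero. This specialisation is itself a restriction, and it sends polarizations of the three basic invariants to polarizations or zero.

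\textbf{Step 2: reduce to tri-invariants.} Apply Theorem~\ref{theo:polarization}' with \(V_1=V\), \(V_2=V^*\), both of dimension \(n\). It gives
\[
  \R[V^{m_1}\oplus (V^*)^{m_2}]^{\SL(n)}
  =
  \langle (\GL(m_1)\times\GL(m_2))\ast \R[V^n\oplus(V^*)^n]^{\SL(n)\times U_n\times U_n}\rangle .
\]
Here the second \(U_n\) acts through the flipped rowwise action. This is harmless: conjugation by \(J\) is just a relabelling of the covectors, and it stays inside \(\GL(m_2)\).

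\textbf{Step 3: identify the tri-invariants.} By the preceding theorem, the tri-invariants are polynomials in \(D_1,\dotsc,D_{n-1}\), \(\det(\vb{M})\) and \(\det(\vb{A})\). Each \(D_k\) is a \(k\times k\) determinant in the pairings \(f^i(\vb*{v}_j)\), so it is an algebraic combination of the duality bracket evaluated on various pairs of variables. Each such evaluation is obtained from the bracket \(\langle f,\vb*{v}\rangle\) on \(V\oplus V^*\) by polarization, namely by inserting it into a larger set of variables via the \(\Gamma\)-action. Likewise, \(\det(\vb{M})\) and \(\det(\vb{A})\) are the determinant of vectors and the determinant of covectors themselves.

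\textbf{Step 4: pass to the \(\Gamma\)-span.} Use the remark following Theorem~\ref{theo:polarization}: if a subalgebra is generated by a set \(S\), then its \(\Gamma\)-span is generated by \(\Gamma\ast S\). Elements of \(\Gamma\ast S\) are the basic brackets and determinants with their arguments replaced by linear combinations of the variables. By multilinearity these expand into linear combinations of the same basic invariants evaluated on the original variables, which are again polarizations.

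The step needing the most care is Step 4, and specifically checking that "polarization" in the paper's sense (the span under \(\Gamma\)) applied to an algebra generated by \(S\) yields exactly algebraic combinations of polarizations of the elements of \(S\). This is precisely the displayed identity \(\sum_i\gamma_i\ast P_i(s_{ij})=\sum_i P_i(\gamma_i\ast s_{ij})\), so the corollary follows by combining the four steps.
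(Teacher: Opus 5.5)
Your proposal is correct and follows the paper's implicit argument. The paper gives no written proof of this corollary; it is meant to follow from the \(\SL(n)\) tri-invariant theorem via Theorem~\ref{theo:polarization}', just as Corollary~\ref{coro:SW} follows from Theorem~\ref{theo:triinv}. Your extra care about enlarging \(m_1,m_2\) to at least \(n\), the harmless \(J\)-flip on the covectors, and the fact that the \(\Gamma\)-span of an algebra generated by \(S\) is generated by \(\Gamma\ast S\) makes explicit details the paper leaves unstated.
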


\section{Invariant forms for orthogonal groups.}
Next, we work out the example of the standard representation \(V=\R^n\) of the group \(G=\GO(n)\).
We identify \(V^n\) with the vector space of square matrices \(\operatorname{M}_n\).
We can restrict to the open dense set \(\GL(n)\), which allows us to define a normal form in every \((\GO(n)\times U_n)\)-orbit.

Recall that for a rectangle matrix \(\vb{M}=\Mat(\vb*{v}_1,\dotsc,\vb*{v}_k)\), the Gram determinant \(\det(\vb{M}^*\vb{M})\) is the square of the volume of the parallelepiped formed by the vectors \(\vb*{v}_1,\dotsc,\vb*{v}_k\) in \(V\). This is known as the \textsl{Gram identity}, a slight refinement of the identity \(\det(\vb{M}^*\vb{M})=\det(\vb{M})^2\) for square matrices. We denote this volume by \(\lVert \vb*{v}_1\wedge\dotsb\wedge \vb*{v}_k\rVert\).

\begin{theo}[QR decomposition]
  Any real square matrix \(\vb{M}=\Mat(\vb*{v}_1,\dotsc,\vb*{v}_n)\in\GL(n)\) decomposes uniquely as \(\vb{M}=gD\gamma^{-1}\), where \(g\in\GO(n)\), \(\gamma\in U_n\) and \(D\) is positive diagonal.
  Moreover the diagonal coefficients \(\lambda_j\) of \(D\) satisfy
  \[
    \lVert \vb*{v}_1\wedge\dotsb\wedge \vb*{v}_j\rVert
    =
    \lambda_1\dotsm\lambda_j.
  \]
\end{theo}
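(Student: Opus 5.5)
The plan is to run Gram--Schmidt orthonormalization on the columns \(\vb*{v}_1,\dotsc,\vb*{v}_n\) of \(\vb{M}\); this gives existence. Uniqueness will come from the observation that a matrix which is orthogonal and upper triangular with positive diagonal must be the identity. This is the same philosophy as the LU decomposition above.

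\emph{Existence.} Since \(\vb{M}\in\GL(n)\), the vectors \(\vb*{v}_1,\dotsc,\vb*{v}_n\) are linearly independent. Gram--Schmidt produces an orthonormal basis \((\vb*{e}_1,\dotsc,\vb*{e}_n)\) such that, for every \(j\),
\[
  \Vect(\vb*{e}_1,\dotsc,\vb*{e}_j)=\Vect(\vb*{v}_1,\dotsc,\vb*{v}_j)
  \qand
  \vb*{v}_j=\lambda_j\vb*{e}_j+\sum_{i<j}r_{ij}\vb*{e}_i ,
\]
where \(\lambda_j>0\) is the norm of the component of \(\vb*{v}_j\) orthogonal to \(\vb*{v}_1,\dotsc,\vb*{v}_{j-1}\). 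Let \(g\) be the matrix with columns \(\vb*{e}_j\); it lies in \(\GO(n)\). Writing the relations above in matrix form gives \(\vb{M}=gR\), with \(R\) upper triangular and positive diagonal \(D=\diag(\lambda_1,\dotsc,\lambda_n)\). Factor \(R=D(D^{-1}R)\). The matrix \(D^{-1}R\) is upper unitriangular, so we may set \(\gamma\bydef(D^{-1}R)^{-1}\in U_n\), since \(U_n\) is a group. This yields \(\vb{M}=gD\gamma^{-1}\).

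\emph{Uniqueness.} Suppose \(gD\gamma^{-1}=g'D'\gamma'^{-1}\). Then
\[
  g'^{-1}g=D'\gamma'^{-1}\gamma D^{-1}.
\]
The left side is orthogonal. The right side is upper triangular with positive diagonal. An orthogonal upper triangular matrix has orthonormal columns. By induction on the columns, the first column is \(\pm\vb*{e}_1\), then each subsequent column is \(\pm\vb*{e}_j\), so the matrix is diagonal with entries \(\pm1\). Positivity of the diagonal forces it to be \(I\). Hence \(g=g'\) and \(D\gamma^{-1}=D'\gamma'^{-1}\). Comparing diagonals, using that \(\gamma^{-1}\) and \(\gamma'^{-1}\) are unitriangular, gives \(D=D'\) and then \(\gamma=\gamma'\). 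This step is the only one requiring any care, and it is elementary.

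\emph{The volume formula.} Let \(\vb{M}_j\) be the \(n\times j\) matrix formed by the first \(j\) columns of \(\vb{M}\). Since \(D\gamma^{-1}\) is upper triangular, \(\vb{M}_j=g\,T_j\), where \(T_j\) is the \(n\times j\) matrix whose only nonzero rows are its first \(j\), forming the leading \(j\times j\) block \(R_j\) of \(R=D\gamma^{-1}\). Then
\[
  \vb{M}_j^*\vb{M}_j=T_j^*g^*gT_j=R_j^*R_j ,
\]
so the Gram identity gives
\[
  \lVert\vb*{v}_1\wedge\dotsb\wedge\vb*{v}_j\rVert^2=\det(R_j)^2=(\lambda_1\dotsm\lambda_j)^2 .
\]
Taking positive square roots, which is legitimate since all \(\lambda_i>0\), yields the claim.
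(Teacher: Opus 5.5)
Your proof is correct and follows essentially the same route as the paper: Gram--Schmidt gives \(\vb{M}=gR\) with \(R\) (the paper's \(P^{-1}\)) upper triangular with positive diagonal, factored as \(D\gamma^{-1}\), and the volume formula follows from the invariance of the Gram matrix under \(g\) together with the triangularity of \(R\). The only difference is that you prove uniqueness explicitly, by showing that an orthogonal upper triangular matrix with positive diagonal is \(I\), whereas the paper simply invokes the uniqueness of the Gram--Schmidt change of basis.
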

\begin{proof}
  The Gram--Schmidt process yields a unique upper-triangular change of basis \(P\) with positive diagonal such that 
  \(g\bydef \vb{M}P\) is orthogonal. We write \(P^{-1}=D\gamma^{-1}\), for \(\gamma\in U_n\) and \(D\) positive diagonal,
  with coefficients \(\lambda_1,\dotsc,\lambda_n\).

  Denote by \(\vb*{v}_1,\dotsc,\vb*{v}_n\) the columns of \(\vb{M}\) and denote by \(\vb*{w}_1,\dotsc,\vb*{w}_n\) the columns of \(P^{-1}\).
  Then for \(i,j\), one has 
  \[
    \langle \vb*{v}_i,\vb*{v}_j\rangle=\langle g\vb*{w}_i,g\vb*{w}_j\rangle=\langle \vb*{w}_i,\vb*{w}_j\rangle.
  \]
  Accordingly
  \[
    \det(\Mat(\vb*{v}_1,\dotsc,\vb*{v}_j)^*\Mat(\vb*{v}_1,\dotsc,\vb*{v}_j))
    =
    \det(\Mat(\vb*{w}_1,\dotsc,\vb*{w}_j)^*\Mat(\vb*{w}_1,\dotsc,\vb*{w}_j))
    =
    (\lambda_1\dotsm\lambda_j)^2.
  \]
  The last equality holds for all \(j\) because \(\gamma\in U_n\) is unitriangular.
\end{proof}

\begin{theo}
  The bi-invariant \(n\)-ary forms on the standard representation of \(\GO(n)\) are generated by the Gram determinants
  \[
    G_k(\vb*{v}_1,\dotsc,\vb*{v}_n)
    \bydef
    \begin{vmatrix}
      \langle \vb*{v}_1,\vb*{v}_1\rangle&\dotso&\langle \vb*{v}_1,\vb*{v}_k\rangle\\
      \\
      \langle \vb*{v}_k,\vb*{v}_1\rangle&\dotso&\langle \vb*{v}_k,\vb*{v}_k\rangle
    \end{vmatrix}.
  \]
  There is thus a basis of \(\GO(n)\)-invariant of degree at most \(2n\).
\end{theo}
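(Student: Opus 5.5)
Let \(P\) be a bi-invariant form, so that \(P(g\vb{M}\gamma)=P(\vb{M})\) for every \(g\in\GO(n)\) and every \(\gamma\in U_n\). The plan follows the proof of Theorem~\ref{theo:triinv}, with the QR decomposition taking the place of the LU decomposition.

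We first restrict \(P\) to the open dense subset \(\GL(n)\subseteq\operatorname{M}_n\). There we write \(\vb{M}=gD\gamma^{-1}\), which gives \(P(\vb{M})=P(D)\) with \(D=\diag(\lambda_1,\dotsc,\lambda_n)\) positive diagonal. Hence on \(\GL(n)\) the form \(P\) equals a polynomial \(Q(\lambda_1,\dotsc,\lambda_n)\), namely the restriction of \(P\) to diagonal matrices. By the QR decomposition theorem, \(\lambda_1\dotsm\lambda_j=\lVert\vb*{v}_1\wedge\dotsb\wedge\vb*{v}_j\rVert\), and by the Gram identity this gives \(G_j=(\lambda_1\dotsm\lambda_j)^2\).

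Next we exploit the residual symmetry. The diagonal matrices \(\diag(\pm1,\dotsc,\pm1)\) lie in \(\GO(n)\), and left multiplication by such a matrix sends \(D\) to another diagonal matrix. Therefore \(Q\) is invariant under each sign change \(\lambda_i\mapsto-\lambda_i\), so \(Q\) is even in each variable separately. Thus \(Q\) is a polynomial in \(\mu_i\bydef\lambda_i^2=G_i/G_{i-1}\), with the convention \(G_0=1\), and we can treat each monomial \(\mu^\alpha\) separately. We have
\[
  \mu_1^{\alpha_1}\dotsm\mu_n^{\alpha_n}
  =
  G_1^{\alpha_1-\alpha_2}\dotsm G_{n-1}^{\alpha_{n-1}-\alpha_n}G_n^{\alpha_n}.
\]
So \(P\) is a Laurent polynomial in \(G_1,\dotsc,G_n\) on \(\GL(n)\).

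The main obstacle is showing that only nonnegative exponents occur. As in Theorem~\ref{theo:triinv}, I would use the algebraic independence of \(G_1,\dotsc,G_n\). This holds because the map \(D\mapsto(G_1,\dotsc,G_n)\) is triangular and dominant: the \(\mu_i\) are independent coordinates on the positive diagonal matrices. Consequently the Laurent expression is unique, and it is a polynomial exactly when the exponents \(\alpha_1-\alpha_2,\dotsc,\alpha_{n-1}-\alpha_n,\alpha_n\) are all nonnegative.

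To force this, I would show that the polynomial \(P\) cannot have poles along the hypersurfaces \(G_k=0\). Concretely, restrict to the diagonal matrices: there \(P=Q\) is a genuine polynomial in \(\lambda_1^2,\dotsc,\lambda_n^2\). The Laurent polynomial in the \(G_k\), once rewritten through \(G_k=\mu_1\dotsm\mu_k\), equals this polynomial in the \(\mu_i\), and the substitution \(G\mapsto\mu\) is an injective monomial change of variables. Hence a monomial in the \(\mu_i\) has nonnegative exponents if and only if the corresponding monomial in the \(G_k\) does, so \(Q\) is a polynomial in \(G_1,\dotsc,G_n\). In fact every \(\mu\)-monomial already yields a Laurent monomial in the \(G_k\), and polynomiality in \(\mu\) alone does not force nonnegative \(G\)-exponents. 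The extra constraint must therefore come from \(U_n\)-invariance together with polynomiality on all of \(\GL(n)\), as in the tri-invariant case. I would check it by evaluating on matrices where \(G_k\to0\) while the other \(G_j\) stay bounded, for instance by letting \(\vb*{v}_k\) tend to the span of \(\vb*{v}_1,\dotsc,\vb*{v}_{k-1}\). A negative exponent of \(G_k\) would make \(P\) blow up there, which is impossible for a polynomial.

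Once polynomiality in \(G_1,\dotsc,G_n\) holds on \(\GL(n)\), it holds on all of \(\operatorname{M}_n\) by density. Since \(\deg G_k=2k\), all generators have degree at most \(2n\).
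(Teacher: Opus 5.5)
Your outline is exactly the paper's: QR normal form on \(\GL(n)\), sign changes making \(Q\) even in each \(\lambda_i\), \(\mu_i=\lambda_i^2=G_i/G_{i-1}\), algebraic independence of the \(G_k\), and density. You rightly single out the one delicate point: why only monomials \(\mu^\alpha\) with \(\alpha_1\geq\dotsb\geq\alpha_n\) can occur. The paper just asserts this (after ``treating every monomial separately''), and the justification you propose does not work.

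The intermediate claim that nonnegative \(\mu\)-exponents give nonnegative \(G\)-exponents is false (\(\mu_2=G_2G_1^{-1}\)), as you notice yourself. More seriously, the blow-up argument fails. If \(\vb*{v}_k\) tends to the span of \(\vb*{v}_1,\dotsc,\vb*{v}_{k-1}\), then \(G_j=G_k\lambda_{k+1}^2\dotsm\lambda_j^2\to0\) for \emph{every} \(j\geq k\). So a negative power of \(G_k\) can be compensated by positive powers of \(G_{k+1},\dotsc,G_n\). In fact nothing ever blows up. Every candidate equals \(Q(\lambda_1^2,\dotsc,\lambda_n^2)\) with \(0<\lambda_i\leq\lVert\vb*{v}_i\rVert\), hence is bounded on bounded subsets of \(\GL(n)\). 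Already for \(n=2\), \(G_2/G_1=\lVert\vb*{v}_2\rVert^2-\langle\vb*{v}_1,\vb*{v}_2\rangle^2/\lVert\vb*{v}_1\rVert^2\leq\lVert\vb*{v}_2\rVert^2\) is bounded, yet it is not a polynomial. Boundedness cannot see the obstruction; you need an algebraic test.

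Two ingredients are missing.

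\emph{Separating the monomials.} The diagonal torus normalizes \(U_n\). Hence each multihomogeneous component \(P_d\) of \(P\) in \((\vb*{v}_1,\dotsc,\vb*{v}_n)\) is again bi-invariant: it is a linear combination of the bi-invariants \(\vb{M}\mapsto P(\vb{M}t)\), \(t\) diagonal. Moreover \(P_d(\diag(\lambda_1,\dotsc,\lambda_n))=c\lambda_1^{d_1}\dotsm\lambda_n^{d_n}\), so \(P_d=c\mu^\alpha\) on \(\GL(n)\) with \(d=2\alpha\).

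\emph{Showing \(c=0\) unless \(\alpha_k\geq\alpha_{k+1}\).} Evaluate at \(\vb{M}=\left[\vb*{e}_1\vert\dotso\vert\vb*{e}_{k-1}\vert x\vb*{e}_k+y\vb*{e}_{k+1}\vert\vb*{e}_{k+1}\vert\dotso\vert\vb*{e}_n\right]\), which is invertible for \(x\neq0\). There \(\mu_k=x^2+y^2\), \(\mu_{k+1}=x^2/(x^2+y^2)\), and \(\mu_i=1\) otherwise. So \(P_d\) restricts to \(c\,x^{2\alpha_{k+1}}(x^2+y^2)^{\alpha_k-\alpha_{k+1}}\). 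Since \(x^2+y^2\) is irreducible over \(\R\) and prime to \(x\), this is a polynomial in \((x,y)\) only if \(c=0\) or \(\alpha_k\geq\alpha_{k+1}\). Equivalently, \(U_n\)-invariance gives \(\sigma_{k,k+1}P_d=0\), and the \(\mathfrak{sl}_2\)-triple \((\sigma_{k,k+1},\sigma_{k+1,k},\sigma_{kk}-\sigma_{k+1,k+1})\) forces \(d_k\geq d_{k+1}\) for a non-zero highest weight vector.

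With these two steps, the rest of your argument goes through.
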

\begin{proof}
  Let \(P\in\R[V^n]\) be a bi-invariant \(n\)-ary form.
  For an invertible matrix \(\vb{M}=\Mat(\vb*{v}_1,\dotsc,\vb*{v}_n)\in\GL(n)\),
  since bi-invariant forms are constant on orbits, one obtains that
  \[
    P(\vb{M})
    =
    P(\lambda_1\vb*{e}_1,\dotsc,\lambda_n\vb*{e}_n),
  \]
  with the notation of the above lemma for the normal form of \(\vb{M}\).
  Now the hyperplane reflections are elements of \(\GO(n)\), hence
  \[
    P(\lambda_1\vb*{e}_1,\dotsc,\lambda_i\vb*{e}_i,\dotsc,\lambda_n\vb*{e}_n)
    =
    P(\lambda_1\vb*{e}_1,\dotsc,-\lambda_i\vb*{e}_i,\dotsc,\lambda_n\vb*{e}_n).
  \]
  One infers that \(P\) depends polynomially on \(\lambda_1^2,\dotsc,\lambda_n^2\).
  Since for every \(i\), \(\lambda_i^2=G_i/G_{i-1}\) is a rational bi-invariant, we can treat every monomial of \(P\) separately.
  We admit the algebraic independence of the Gram determinants \(G_k\) (their dependance on the vectors \(\vb*{v}_i\) is triangular).
  The monomial 
  \[
    \lambda_1^{2\alpha_1}\dotsm\lambda_n^{2\alpha_n} = G_{1}^{\alpha_1-\alpha_2}\dotsm G_{n-1}^{\alpha_{n-1}-\alpha_n}G_n^{\alpha_n}
  \]
  is polynomial in \(v\) if and only if \(\alpha_1\geq\dotsb\geq\alpha_n\).

  Any bi-invariant form is thus a polynomial in the Gram determinants of \(\vb{M}\in\GL(n)\).
  This is globally true on \(V^n\) because a polynomial identity that holds on \(\GL(n)\) holds on \(V^n\), by density.
\end{proof}

\begin{coro}\label{coro:OSW}
  Every polynomial \(\GO(n)\)-invariant of vectors can be obtained by algebraic combinations and polarization from the metric \(V\oplus V\to\R\).
\end{coro}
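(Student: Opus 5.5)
The plan is to combine Weyl's Polarization Theorem~\ref{theo:polarization} with the preceding theorem on bi-invariant \(n\)-ary forms, and then reduce the Gram determinants to polynomials in scalar products.

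First, fix \(m\) and consider \(\R[V^m]^{\GO(n)}\). If \(m\leq n\), point (3) of Theorem~\ref{theo:polarization} gives \(\R[V^m]^{\GO(n)}=\R[V^m]\cap\R[V^n]^{\GO(n)}\). It therefore suffices to treat \(m\geq n\). In that case point (3) gives
\[
  \R[V^m]^{\GO(n)}=\langle\Gamma\ast\R[V^n]^{\GO(n)}\rangle ,
\]
and point (1), applied with \(m=n\), gives \(\R[V^n]^{\GO(n)}=\langle\GL(n)\ast\R[V^n]^{\GO(n)\times U_n}\rangle\). Combining the two, every invariant is a linear combination of \(\Gamma\)-translates of bi-invariant \(n\)-ary forms.

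By the preceding theorem, each bi-invariant is a polynomial in the Gram determinants \(G_1,\dotsc,G_n\). Expanding each \(G_k\) as a determinant shows it is a polynomial in the scalar products \(\langle\vb*{v}_i,\vb*{v}_j\rangle\). Each such scalar product is in turn a polarization of the metric: it is obtained from \(\langle\vb*{v}_1,\vb*{v}_1\rangle\), or from the bilinear form \(\langle\vb*{v}_1,\vb*{v}_2\rangle\) on \(V\oplus V\), by the action of a permutation or elementary matrix in \(\Gamma\), equivalently by polarization operators \(\sigma_{\alpha\beta}\).

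Next I use the observation made right after Theorem~\ref{theo:polarization}. The identity \(\gamma\ast P(s_1,\dotsc)=P(\gamma\ast s_1,\dotsc)\) holds because \(\Gamma\) acts by algebra automorphisms of \(\R[V^m]\). Hence \(\langle\Gamma\ast W\rangle\) is algebraically generated by \(\Gamma\ast S\) whenever \(W\) is generated by \(S\). Take \(S\) to be the set of scalar products \(\langle\vb*{v}_i,\vb*{v}_j\rangle\), \(1\leq i,j\leq n\). Then \(\Gamma\ast S\) consists of the forms \(\langle\sum_\alpha\gamma_{\alpha i}\vb*{v}_\alpha,\sum_\beta\gamma_{\beta j}\vb*{v}_\beta\rangle\). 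These are linear combinations of the \(\langle\vb*{v}_\alpha,\vb*{v}_\beta\rangle\), so they are again polarizations of the metric. Therefore \(\R[V^m]^{\GO(n)}\) is algebraically generated by polarizations of the metric. For \(m\leq n\), restriction simply sets the extra vectors to zero, and the scalar products involving them vanish, so the same conclusion holds.

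I expect the only delicate step to be the bookkeeping: checking that the algebra generated by \(G_1,\dotsc,G_n\) together with its \(\Gamma\)-span is exactly captured by the scalar products. Concretely, the preceding theorem only says that bi-invariants lie in \(\R[G_1,\dotsc,G_n]\). Polarization acts on the whole invariant space, not merely on the generators, and the algebra-automorphism remark is what handles this. The identification of the \(G_k\) as polynomials in scalar products is immediate from the determinant formula.
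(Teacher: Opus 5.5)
Your proposal is correct and follows the route the paper intends. The corollary comes from combining three ingredients: the theorem that bi-invariant \(n\)-ary forms are polynomials in the Gram determinants, Weyl's Polarization Theorem~\ref{theo:polarization}, and the remark that \(\Gamma\) acts by algebra automorphisms, so that polarizations of polynomials in scalar products are polynomials in polarized scalar products. Your treatment of the case \(m\le n\) by restriction, and of \(\langle\vb*{v}_\alpha,\vb*{v}_\alpha\rangle\) via \(\sigma_{\alpha\beta}\), fills in bookkeeping the paper leaves implicit.
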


\begin{rema}
  This proof also works e.g. for the Lorentz group, replacing the Gram--Schmidt process with the Lorentzian Gram--Schmidt process and working on the open dense subset where all leading minors of the Gram matrix \(\vb{M}^*\eta \vb{M}\) are non-zero.
\end{rema}

\begin{theo}
  For \(n\geq 2\), the bi-invariant \(n\)-ary forms on the standard representation of \(\SO(n)\) are generated by the Gram determinants \(G_1,\dotsc,G_{n-1}\) and the determinant.
  There is thus a basis of \(\SO(n)\)-invariant of degree at most \(2n-2\).
\end{theo}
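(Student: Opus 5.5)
We follow the proof of the preceding theorem for \(\GO(n)\), replacing the QR normal form by an \(\SO(n)\)-adapted one. Let \(P\in\R[V^n]\) be a bi-invariant \(n\)-ary form under \(\SO(n)\times U_n\), and restrict to the open dense set \(\GL(n)\subseteq V^n\). The QR decomposition gives \(\vb{M}=gD\gamma^{-1}\) with \(g\in\GO(n)\), \(\gamma\in U_n\), and \(D\) positive diagonal. If \(\det g=-1\), write \(g=g_0 S\), where \(S=\diag(1,\dotsc,1,-1)\) and \(g_0\in\SO(n)\). Since \(SD=\diag(\lambda_1,\dotsc,\lambda_{n-1},-\lambda_n)\), the normal form in the \((\SO(n)\times U_n)\)-orbit of \(\vb{M}\) is
\[
  (\lambda_1\vb*{e}_1,\dotsc,\lambda_{n-1}\vb*{e}_{n-1},\epsilon\lambda_n\vb*{e}_n),
  \qquad \epsilon=\operatorname{sign}\det\vb{M}.
\]
Hence \(P(\vb{M})=Q(\lambda_1,\dotsc,\lambda_{n-1},\mu)\) with \(\mu\bydef\epsilon\lambda_n\), where \(Q(x_1,\dotsc,x_n)=P(x_1\vb*{e}_1,\dotsc,x_n\vb*{e}_n)\) is a polynomial.

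Next we use the sign symmetries still available in \(\SO(n)\). For \(n\geq 2\), the diagonal matrix with \(-1\) in positions \(i\) and \(n\) lies in \(\SO(n)\). Applying it for \(i<n\), and then re-normalising the last sign with the orbit description above, shows that \(Q\) is even in each \(x_i\) for \(i<n\). (Concretely: evaluate \(P\) at diagonal matrices with all possible signs and compare.) We cannot conclude that \(Q\) is even in \(x_n\). Therefore \(Q\) is a polynomial in \(x_1^2,\dotsc,x_{n-1}^2\) and \(x_n\).

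Now substitute the invariants. We have \(\lambda_i^2=G_i/G_{i-1}\) for \(i\le n-1\), and \(\lambda_1\dotsm\lambda_{n-1}\mu=\det\vb{M}\), whence \(\mu=\det\vb{M}/G_{n-1}\). So \(P\) is a Laurent polynomial in \(G_1,\dotsc,G_{n-1}\) and \(\det\) on \(\GL(n)\), and we may treat each monomial \(\lambda_1^{2\alpha_1}\dotsm\lambda_{n-1}^{2\alpha_{n-1}}\mu^{\beta}\) separately. We admit, as in the previous proofs, the algebraic independence of \(G_1,\dotsc,G_{n-1},\det\), which holds by the same triangularity. The key manipulation is the relation \(\det^2=G_n\), which lets one write
\[
  \mu^{\beta}=\det^{\beta}\,G_{n-1}^{-\beta}.
\]
The monomial then equals
\[
  G_1^{\alpha_1-\alpha_2}\dotsm G_{n-2}^{\alpha_{n-2}-\alpha_{n-1}}\,G_{n-1}^{\alpha_{n-1}-\beta}\,\det^{\beta}.
\]

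The main obstacle is showing that only genuinely polynomial monomials occur. The criterion is \(\alpha_1\ge\dotsb\ge\alpha_{n-1}\ge\beta\), obtained by comparing with the polynomiality of \(Q\) through degree considerations: a negative power of some \(G_i\) cannot be polynomial, by independence and irreducibility. Granting this, \(P\) is a polynomial in \(G_1,\dotsc,G_{n-1},\det\) on \(\GL(n)\), and hence on \(V^n\) by density. The degree bound follows because \(\deg G_k=2k\le 2n-2\) for \(k\le n-1\), while the generator of degree \(n\) is \(\det\), and \(n\le 2n-2\) when \(n\ge 2\).
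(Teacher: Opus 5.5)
Your overall strategy (a QR normal form in which the orientation sign sits on a single vector, followed by the sign changes available in \(\SO(n)\)) is the same as the paper's. However, the parity step is wrong, and so is the substitution step.

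\textbf{The parity step.} The matrices \(\diag(1,\dotsc,-1,\dotsc,-1)\) with \(-1\) in positions \(i\) and \(n\) only give
\[
  Q(x)=Q(x_1,\dotsc,-x_i,\dotsc,-x_n).
\]
``Re-normalising the last sign'' just reproduces this relation. Flipping a single sign reverses orientation, which would require a reflection, and reflections are not in \(\SO(n)\). In fact the claim that \(Q\) is even in \(x_1,\dotsc,x_{n-1}\) is false. The bi-invariant \(P=\det\) gives \(Q=x_1x_2\dotsm x_n\), which is odd in every variable and does not lie in \(\R[x_1^2,\dotsc,x_{n-1}^2,x_n]\). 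So your claimed structure excludes the very generator you want to reach. The correct conclusion, as in the paper, is that within each monomial of \(Q\) all exponents have the same parity:
\[
  Q=x_1\dotsm x_n\,F(x_1^2,\dotsc,x_n^2)+H(x_1^2,\dotsc,x_n^2).
\]

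\textbf{The substitution step.} The identity \(\mu=\det\vb{M}/G_{n-1}\) is wrong. Since \(G_{n-1}=(\lambda_1\dotsm\lambda_{n-1})^2\), one has \(\mu=\det\vb{M}/\sqrt{G_{n-1}}\). A degree count already shows the problem: \(\mu\) is homogeneous of degree \(1\), while \(\det/G_{n-1}\) has degree \(2-n\). Hence \(\mu^\beta=\det^\beta G_{n-1}^{-\beta/2}\), which is not even a Laurent monomial when \(\beta\) is odd. Consequently your displayed monomial and the criterion \(\alpha_{n-1}\geq\beta\) are both incorrect. For example, take \(P=G_n=\det^2\), with all \(\alpha_i=1\) and \(\beta=2\). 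Your formula returns \(\det^2/G_{n-1}\) instead of \(\det^2\), and your criterion would reject this genuine polynomial bi-invariant.

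\textbf{The repair.} Once the parity splitting is correct, the fix is direct. The odd part carries the factor \(\lambda_1\dotsm\lambda_{n-1}\mu=\det\vb{M}\). The even monomials satisfy
\[
  \lambda_1^{2a_1}\dotsm\lambda_{n-1}^{2a_{n-1}}\mu^{2b}=G_1^{a_1-a_2}\dotsm G_{n-1}^{a_{n-1}-b}G_n^{b},
\]
so they are handled exactly as in the \(\GO(n)\) theorem, and then \(G_n=\det^2\) removes \(G_n\). One way to make this rigorous is to fix a reflection \(R\). Then \(\tfrac12(P(\vb{M})+P(R\vb{M}))\) and \(\tfrac12(P(\vb{M})-P(R\vb{M}))/\det\vb{M}\) are polynomial \(\GO(n)\)-bi-invariants. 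For the second, the numerator vanishes on singular matrices, so it is divisible by \(\det\).
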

\begin{proof}
  The proof proceeds as before.
  Let \(P\in\R[V^n]\) be a bi-invariant \(n\)-ary form.
  For an invertible matrix \(\vb{M}=\Mat(\vb*{v}_1,\dotsc,\vb*{v}_n)\in\GL(n)\),
  since bi-invariant forms are constant on orbits, one obtains that
  \[
    P(\vb{M})
    =
    P(\pm\lambda_1\vb*{e}_1,\lambda_2\vb*{e}_2,\dotsc,\lambda_n\vb*{e}_n),
  \]
  with the notation of the above lemma for the normal form of \(\vb{M}\).
  Here the sign \(\pm\) is the sign of \(\det(\vb{M})\), because we want \(g\in\SO(n)\).
  Now the central symmetries in coordinate planes are elements of \(\GO(n)\), hence
  \[
    P(\pm\lambda_1\vb*{e}_1,\dotsc,\lambda_i\vb*{e}_i,\dotsc,\lambda_j\vb*{e}_j,\dotsc,\lambda_n\vb*{e}_n)
    =
    P(\pm\lambda_1\vb*{e}_1,\dotsc,-\lambda_i\vb*{e}_i,\dotsc,-\lambda_j\vb*{e}_j,\dotsc,\lambda_n\vb*{e}_n).
  \]
  One infers that \(P\) has only monomials where all \(\lambda_i\) have the same parity.
  Therefore
  \[
    P(\vb{M})
    =
    \pm(\lambda_1\dotsm\lambda_n)
    f(\lambda_1^2,\dotsc,\lambda_n^2)
    +
    g(\lambda_1^2,\dotsc,\lambda_n^2).
  \]
  As before, we deduce that
  \[
    P(\vb{M})
    =
    \pm(\lambda_1\dotsm\lambda_n)
    \tilde f(G_1,\dotsc,G_n)
    +
    \tilde g(G_1,\dotsc,G_n).
  \]
  Lastly we notice that \(\pm(\lambda_1\dotsm\lambda_n)=\det(\vb{M})\), and \(G_n(\vb{M})=\det(\vb{M})^2\).

  Any bi-invariant form is thus a polynomial in the Gram determinants \(G_1,\dotsc,G_{n-1}\) and the determinant of \(\vb{M}\in\GL(n)\).
  This is globally true on \(V^n\) because a polynomial identity that holds on \(\GL(n)\) holds on \(V^n\), by density.
\end{proof}

\begin{coro}\label{coro:SOSW}
  Every polynomial \(\SO(n)\)-invariant of vectors can be obtained by algebraic combinations and polarization from the metric \(V\oplus V\to\R\) and the determinant \(V^{n}\to \R\).
\end{coro}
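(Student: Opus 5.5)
The plan is to combine the preceding theorem on bi-invariant \(n\)-ary forms for \(\SO(n)\) with Weyl's Polarization Theorem~\ref{theo:polarization}, in the same way Corollary~\ref{coro:OSW} follows for \(\GO(n)\). There are two cases, according to the number \(m\) of vector variables.

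First take \(m\geq n\). By the third point of Theorem~\ref{theo:polarization} we have \(\R[V^m]^{\SO(n)}=\langle\Gamma\ast\R[V^n]^{\SO(n)}\rangle\). Moreover, by the first point applied with \(m=n\), \(\R[V^n]^{\SO(n)}=\langle\GL(n)\ast\R[V^n]^{(\SO(n)\times U_n)}\rangle\). The preceding theorem shows that \(\R[V^n]^{(\SO(n)\times U_n)}\) is algebraically generated by \(G_1,\dotsc,G_{n-1}\) and \(\det\). By the remark following Theorem~\ref{theo:polarization}, if a subalgebra \(W\) is generated by a set \(S\), then \(\langle\Gamma\ast W\rangle\) is generated by \(\Gamma\ast S\). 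Hence every invariant is a polynomial in the \(\Gamma\)-translates of the Gram determinants and of \(\det(\vb*{v}_1,\dotsc,\vb*{v}_n)\).

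It remains to see that these translates come from the metric and the determinant. Each \(G_k\) is a polynomial in the scalar products \(\langle\vb*{v}_i,\vb*{v}_j\rangle\), and each of these is obtained by restitution and polarization from the metric \(V\oplus V\to\R\). A translate \(\gamma\ast\langle\vb*{v}_i,\vb*{v}_j\rangle\) is, by bilinearity, a linear combination of products \(\langle\vb*{v}_\alpha,\vb*{v}_\beta\rangle\). Likewise \(\gamma\ast\det(\vb*{v}_1,\dotsc,\vb*{v}_n)\) is, by multilinearity, a linear combination of determinants \(\det(\vb*{v}_{\alpha_1},\dotsc,\vb*{v}_{\alpha_n})\), which are exactly the polarizations of the determinant \(V^n\to\R\).

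For \(m<n\), every invariant is, by \eqref{eq:restriction}, the restriction of an invariant in \(n\) variables to the first \(m\) vectors, with the remaining vectors set to zero. Restriction is an algebra homomorphism, so the same generators survive. The determinant terms vanish under this restriction, and what is left are the scalar products.

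The only delicate point is the bookkeeping that identifies polarization (the \(\Gamma\)-span) with substitutions of linear combinations into the multilinear generators. This is routine once multilinearity of the metric and of the determinant is used.
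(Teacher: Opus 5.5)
Your proof is correct and follows the route the paper intends; the corollary is stated there without proof, as a direct consequence of the \(\SO(n)\) bi-invariant theorem and Weyl's Polarization Theorem. You combine that theorem with the remark that \(\langle\Gamma\ast W\rangle\) is algebraically generated by \(\Gamma\ast S\) when \(W\) is generated by \(S\), use multilinearity to identify the translates with scalar products and determinants \(\det(\vb*{v}_{\alpha_1},\dotsc,\vb*{v}_{\alpha_n})\), and handle \(m<n\) through the restriction identity \eqref{eq:restriction}, where the determinant terms vanish — a detail the paper leaves implicit.
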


\part{Tensor invariants.}\label{part:tensor}
\section{Weyl's First Fundamental Theorem.}
We now truly enter Weyl's philosophy of linearizing Classical Invariant Theory.
\textit{All polynomial invariants of classical groups are symmetrizations of tensor expressions built only from:
tensor products, permutations of indices, contractions using the fundamental invariant bilinear forms and (for special groups) determinants.}
For a finite-dimensional vector space \(V\), we denote the \textsl{tensor algebra} of \(V\) by
\(
T^{\bullet}(V)
\bydef
\bigoplus_{p\geq 0} V^{\otimes p}
\).
Its \textsl{mixed tensor algebra} is then the product \(T^\bullet(V)\otimes T^\bullet(V^*)\).
The following general result is admitted.
\begin{THEO}[Weyl's typical invariants]
  Let \(V\) be a finite-dimensional rational representation of a classical group \(G\). There exists a finite number of basic invariants that generate the mixed tensor algebra \((T^{\bullet}(V)\otimes T^{\bullet}(V^*))^G\).
\end{THEO}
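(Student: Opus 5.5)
The plan is to reduce the tensor statement to the polynomial results of Part II. By Theorem~\ref{theo:linearization}, a $G$-invariant element of $V^{\otimes p}\otimes (V^*)^{\otimes q}$ is the same thing as a $G$-invariant multilinear form on $(V^*)^p\times V^q$. I will view it as a multi-homogeneous invariant polynomial of multidegree $(1,\dotsc,1)$ in $p$ covectors and $q$ vectors. So it suffices to describe the multilinear part of $\R[(V^*)^p\oplus V^q]^G$ for all $p,q$. I will treat the classical groups $\GL(n),\SL(n),\GO(n),\SO(n)$ acting on their standard representation $V=\R^n$. (For a general rational representation one would first embed it in mixed tensors of the standard one; I would not pursue that in detail.)

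The key steps, in order, are as follows.

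\begin{enumerate}
  \item Invoke Corollaries~\ref{coro:SW}, \ref{coro:SLSW}, \ref{coro:OSW}, \ref{coro:SOSW}. They show that every invariant polynomial in any number of vectors and covectors is generated, through algebraic combinations and polarization, by a finite list of basic invariants: the duality bracket $\langle f,\vb*{v}\rangle$, the determinants of $n$ vectors or $n$ covectors, and in the orthogonal case the metric.
  \item Make polarization explicit. Polarization, that is the action of $\Gamma=\GL(m)$, sends a product of basic invariants to a sum of products of the same basic invariants evaluated on linear combinations of the variables. Expanding by multilinearity of each basic invariant, every invariant polynomial becomes a polynomial in the basic invariants evaluated on the variables themselves.
  \item Extract the multilinear component. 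Take the component of multidegree $(1,\dotsc,1)$. A monomial in basic invariants contributes there only if each variable occurs exactly once. Such a monomial is precisely a tensor product of the basic invariant tensors $\delta^i_j$, $\det$ (the volume tensor), and $g_{ij}$ or $g^{ij}$, followed by a permutation of the tensor slots.
  \item Conclude generation. The invariant mixed tensor algebra, with its tensor product and permutations of factors, is therefore generated by finitely many elements: $\delta\in V\otimes V^*$, the determinant tensors in $(V^*)^{\otimes n}$ and $V^{\otimes n}$, and the metric and its inverse.
\end{enumerate}

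I expect the main obstacle to be step~3. Polarization is a linear action on the whole polynomial ring, so I must check that taking the multilinear component commutes with the expansions above. That requires using carefully that the $\Gamma$-action preserves multidegree only after summing over the full $\Gamma$-orbit. It also requires that the diagonal torus of $\Gamma$ separates the multidegrees, so that projecting onto degree $(1,\dotsc,1)$ is itself a linear combination of $\Gamma$-translates. I would first try to handle this with the scaling operators $\sigma_{\alpha\alpha}$ as eigenvalue projectors.

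A secondary point is what ``generate'' means, since a finite set cannot generate a graded algebra that is infinite in every degree unless permutations of slots are allowed. I will state the result as generation under tensor product and index permutations, consistent with Weyl's philosophy quoted above.
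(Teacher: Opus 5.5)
For the standard representations of $\GL(n)$, $\SL(n)$, $\GO(n)$ and $\SO(n)$, your argument is what the paper does in its worked examples. It identifies invariant tensors with multilinear invariants and reads them off Corollaries~\ref{coro:SW}--\ref{coro:SOSW}. For the orthogonal groups you must also identify $V$ with $V^*$ through the metric, since Corollaries~\ref{coro:OSW} and~\ref{coro:SOSW} only treat vectors; this is where the cometric enters.

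The statement, however, concerns an \emph{arbitrary} finite-dimensional rational representation $V$. The paper does not prove this general case; it explicitly admits it. Your proposal sets exactly this case aside, and the route you mention for it does not by itself give finiteness. Suppose $V$ is an equivariant direct summand of a mixed tensor space $W$ of the standard representation, with maps $\iota\colon V\to W$ and $\pi\colon W\to V$. Then an invariant of $V^{\otimes p}\otimes V^{*\otimes q}$ is indeed a contraction pattern of $\delta$'s, determinants and metrics on $W$, sandwiched between copies of $\iota$ and $\pi$. But $\iota$ and $\pi$ are not elements of $T^\bullet(V)\otimes T^\bullet(V^*)$. Moreover, one connected pattern can link arbitrarily many $V$-slots, so no finite list of generators living on $V$ comes out. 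Even for $\End(V)=V\otimes V^*$, where no projector is needed, the paper has to analyse cycles and use composition to reach trace, product and transpose.

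The missing idea is to bypass the standard representation entirely. Theorem~\ref{theo:polarization}' holds for every rational representation of every linear group. With $N=\dim V$, each invariant of $p$ covectors and $q$ vectors is a linear combination of $(\GL(p)\times\GL(q))$-translates of elements of $\R[V^{*N}\oplus V^{N}]^G$; add dummy variables if $p$ or $q$ is less than $N$.

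Finiteness then comes from Hilbert's theorem for the reductive group $G$, via a Reynolds operator: Appendix~\ref{sec:reducibility} for $\GL$, Haar averaging for $\GO(n)$ and $\SO(n)$. This gives finitely many multihomogeneous generators $f_1,\dotsc,f_s$ of that ring. Take their complete polarizations $\pol(f_j)$ as the basic invariant tensors. Each translate $\gamma\ast f_j$ is $\pol(f_j)$ evaluated on linear combinations of the variables, with repetitions. Expanding by multilinearity and keeping the component of multidegree $(1,\dotsc,1)$ shows that every invariant multilinear form is a linear combination of tensor products of the $\pol(f_j)$ with permuted slots.

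The same observation shows that your anticipated obstacle in step~3 is not one. Once an invariant is written as a polynomial in multilinear basic invariants evaluated at the variables, each monomial is multihomogeneous. Extracting the multilinear part then just means discarding monomials; no torus projection or averaging over $\Gamma$-orbits is needed.
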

The following illustrative examples will be worked below.
\begin{itemize}
  \item
    The only basic tensor invariant for the standard representation of the general linear group \(\GL(n)\) is the duality bracket;
  \item
    For the special linear group \(\SL(n)\), there is also the determinant.
  \item
    The only basic tensor invariants for the standard representation of the orthogonal group \(\GO(n)\) are the metric and the cometric;
  \item
    For the special orthogonal group \(\SO(n)\), there is also the determinant.
  \item
    The only basic tensor invariants for the conjugation action on \(\End(V)\) of the general linear group \(\GL(V)\) are the trace and the matrix product.
  \item
    The only basic tensor invariants for the conjugation action on \(\End(V)\) of the orthogonal group \(\GO(V)\) are the trace, the transpose and the matrix product.
\end{itemize}

\section{Tensor invariants for general and special linear groups.}
Let \(V\) be a \(n\)-dimensional vector space.
We consider the invariants of the mixed tensor algebra
\[T^\bullet(V)\otimes T^\bullet(V^*)\]
under the diagonal action of \(\GL(V)\).
This is a bigraded algebra, with graded pieces
\[
  T^{p,q}(V)
  =
  V^{\otimes p}\otimes V^{*\otimes q}
\]
and the action of \(\GL(V)\) respects this bigrading.

\begin{theo}
  The only basic \(\GL(n)\)-invariant tensor of the standard representation \(V\) is the duality pairing \(V^*\otimes V\to\R\).
  Every \(\GL(n)\)-invariant mixed tensor can be built from this pairing and tensor index permutations,
  or equivalently from index contractions:
  \[
    (\vb*{v}_1,\dotsc,\vb*{v}_m;f^1,\dotsc,f^m)
    \mapsto
    f^{i}(\vb*{v}_{j}).
  \]
\end{theo}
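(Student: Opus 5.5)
The plan is to deduce the tensor statement from the polynomial one, Corollary~\ref{coro:SW}, through the linearization principle of Theorem~\ref{theo:linearization}. An invariant mixed tensor in \(T^{p,q}(V)=V^{\otimes p}\otimes V^{*\otimes q}\) is the same thing as a \(\GL(n)\)-invariant multilinear form \(F\) on \((V^*)^p\times V^q\). Such a form is in particular a multi-homogeneous polynomial of multi-degree \((1,\dotsc,1)\) in \(p\) covector variables and \(q\) vector variables. No symmetrization is needed, since every variable occurs only in degree one. Put \(m=\max(p,q)\) and regard \(F\) as an invariant in \(\R[V^m\oplus (V^*)^m]\).

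First, I use Corollary~\ref{coro:SW}, together with the subsequent remark that polarization commutes with products. Together they say that \(\R[V^m\oplus(V^*)^m]^{\GL(n)}\) is spanned by products of polarized brackets, i.e.\ by monomials \(\prod_{r} f^{i_r}(\vb*{v}_{j_r})\) in the contractions \(f^i(\vb*{v}_j)\). To justify this concretely: by Theorem~\ref{theo:triinv}, the tri-invariants are polynomials in the minors \(D_k\), which expand as polynomials in the brackets \(f^i(\vb*{v}_j)\). Applying \(\Gamma\times\Gamma'\) to a bracket \(f^i(\vb*{v}_j)\) gives bilinear combinations of brackets, so the polarization \(\langle(\Gamma\times\Gamma')\ast\cdot\rangle\) of the algebra generated by brackets is still contained in that algebra.

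Next, I project onto the correct multi-degree. The \(\GL(n)\)-action preserves multi-degree, and \(F\) lies in the multi-degree \((1,\dotsc,1;1,\dotsc,1)\) component (with zeros in the unused slots if \(p\neq q\)). So \(F\) is a linear combination of those bracket monomials that are multilinear, each covector \(f^i\) and each vector \(\vb*{v}_j\) appearing exactly once. A degree count shows that such a monomial pairs each covector with exactly one vector. Hence \(p=q\) unless \(F=0\), and each monomial has the form \(\prod_{i=1}^p f^i(\vb*{v}_{\pi(i)})\) for some permutation \(\pi\in S_p\). Reading this back as a tensor, it is the tensor product of \(p\) copies of the duality pairing (the identity tensor \(\delta\in V\otimes V^*\)) followed by a permutation of the indices. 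This is exactly the claim.

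The main obstacle is the first step: making rigorous that Corollary~\ref{coro:SW} really yields a spanning set of bracket monomials in every number of variables, and not just invariants ``obtained by polarization'' in a vaguer sense. I would handle this by checking carefully that the linear span of bracket monomials is stable under all \(\sigma_{\alpha\beta}\), so that it is a \(\Gamma\)-subrepresentation containing the tri-invariants. Theorem~\ref{theo:polarization}' then gives equality with the whole invariant ring. The multi-degree projection and the permutation bookkeeping are routine.
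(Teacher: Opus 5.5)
Your proposal is correct and follows the paper's own route: you identify invariant mixed tensors with multilinear invariants in \(\R[V^p\oplus V^{*q}]\) and invoke Corollary~\ref{coro:SW}, which the paper derives from Theorem~\ref{theo:triinv} and Theorem~\ref{theo:polarization}' exactly as you sketch. The paper stops at that point. You also make explicit the multidegree projection that keeps only the monomials \(\prod_i f^i(\vb*{v}_{\pi(i)})\), forcing \(p=q\); this is the step that turns ``algebraic combinations and polarization of the bracket'' into ``tensor products of the pairing and index permutations''.
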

\begin{proof}
  Invariants tensors \(T^{p,q}(V)\to\R\) naturally identify with invariant multilinear polynomials in \(\R[V^p\oplus V^{*q}]\).
  The result hence follows from Corollary~\ref{coro:SW}.
\end{proof}

\begin{theo}
  The only basic \(\SL(n)\)-invariant tensors of the standard representation \(V\) are the duality pairing \(V\otimes V^*\to\R\), and the determinant \(V^{\otimes n}\to\R\).
  Every \(\SL(n)\)-invariant mixed tensor can be built from these basic tensors and tensor index permutations,
\end{theo}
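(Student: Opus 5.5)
The plan mirrors the proof of the preceding $\GL(n)$ statement, with Corollary~\ref{coro:SLSW} in place of Corollary~\ref{coro:SW}. First, an $\SL(n)$-invariant mixed tensor in $T^{p,q}(V)$, viewed as a multilinear form on $V^{*p}\times V^{q}$ (or dually on $V^q\times V^{*p}$), is the same thing as an $\SL(n)$-invariant multilinear polynomial in $\R[V^q\oplus V^{*p}]$. This identification is $G$-equivariant, exactly as in the remark on polarization and restitution (Theorem~\ref{theo:linearization}). Hence it suffices to describe the multilinear $\SL(n)$-invariants of $q$ vectors and $p$ covectors.

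By Corollary~\ref{coro:SLSW}, every such polynomial invariant is obtained from three basic invariants by algebraic combinations and polarization. These are the brackets $f^i(\vb*{v}_j)$, the determinants $\det(\vb*{v}_{j_1},\dotsc,\vb*{v}_{j_n})$, and the determinants $\det(f^{i_1},\dotsc,f^{i_n})$. I will then use multihomogeneity. The invariant is multilinear, so it has multidegree $(1,\dotsc,1)$ in the $p+q$ variables. Polarizing a product of basic invariants (via the operators $\sigma_{\alpha\beta}$) produces sums of products of basic invariants evaluated at the new variables. Extracting the component of multidegree $(1,\dotsc,1)$ therefore expresses the invariant as a linear combination of products of brackets and determinants. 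In each product, every vector and every covector appears exactly once.

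Each such product is, as a tensor, a tensor product of copies of the pairing $\delta$, of $\det\in (V^{*})^{\otimes n}$ and of $\det^*\in V^{\otimes n}$. These factors are then rearranged by a permutation of tensor slots. This yields the claimed description, provided $\det^*$ is accounted for. The duality gives $\det^*$ as the dual determinant, and in fact
\[
\det(f^{1},\dotsc,f^{n})\det(\vb*{v}_1,\dotsc,\vb*{v}_n)=\det\big(f^i(\vb*{v}_j)\big).
\]
So a product containing both a determinant of vectors and one of covectors can be rewritten using pairings only, which shows the list of basic tensors is consistent with the statement.

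The main obstacle is justifying that multilinear components of polarized products are again products of basic invariants, with each variable used once. I would verify this carefully on the generators, where polarization acts as a derivation. For a bracket, $\sigma_{\alpha\beta}$ applied to $f^i(\vb*{v}_\beta)$ gives $f^i(\vb*{v}_\alpha)$. For a determinant, multilinearity gives the analogous substitution of one column. Applying Leibniz's rule, the polarization of a product is a sum of products of basic invariants with substituted arguments. Since the multilinear invariants span the component of multidegree $(1,\dotsc,1)$, they are spanned by such products.
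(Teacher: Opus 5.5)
Your route is the paper's: you identify $\SL(n)$-invariants of $T^{p,q}(V)$ with multilinear invariants of $p$ covectors and $q$ vectors, and then apply Corollary~\ref{coro:SLSW}. Your extraction of the multidegree $(1,\dotsc,1)$ part is correct. Polarization operators are derivations sending brackets and determinants to brackets and determinants, so the multilinear invariants are spanned by products in which each variable occurs once. This is more explicit than the paper's one line.

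The gap is in the last step, where you dispose of $\det^*$. The identity $\det(f^1,\dotsc,f^n)\det(\vb*{v}_1,\dotsc,\vb*{v}_n)=\det\big(f^i(\vb*{v}_j)\big)$ only eliminates a covector determinant that occurs together with a vector determinant. It does nothing for a product containing covector determinants but no vector determinant. The simplest case is $\det(f^1,\dotsc,f^n)$ itself, a non-zero invariant in $T^{n,0}(V)=V^{\otimes n}$.

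That tensor cannot be built from $\delta$ and $\det\in V^{*\otimes n}$ by the allowed operations. Tensor products add the numbers of contravariant and covariant slots, contractions lower both by one, and permutations keep both. Starting from $\delta$ (one of each) and $\det$ ($n$ covariant slots), the difference ``contravariant minus covariant'' therefore stays in $\{0,-n,-2n,\dotsc\}$. But $\det^*$ has difference $n$. So ``the duality gives $\det^*$ as the dual determinant'' is not a construction with the allowed operations.

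What your argument actually proves is that the basic tensors are $\delta$, $\det$ and $\det^*$. The identity only shows that no product ever needs both $\det$ and $\det^*$. The paper's proof glosses over exactly the same point (``recovered from the determinant and the inverse of the duality pairing''). The statement must therefore be read with ``the determinant'' covering both $\det\in\Lambda^nV^*$ and $\det^*\in\Lambda^nV$. Under that reading your proof is complete, but you should say so explicitly rather than claim that $\det^*$ is redundant.
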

\begin{proof}
  This time it folllows from Corollary~\ref{coro:SLSW}.
  Note that the determinant of covectors can be recovered from the determinant and the inverse of the duality pairing.
\end{proof}

\section{Tensor invariants for orthogonal groups.}
Let \(V\) be a \(n\)-dimensional vector space.
We consider the invariants of the mixed tensor algebra \(T^\bullet(V)\otimes T^\bullet(V^*)\) under the diagonal action of \(\GO(V)\).

\begin{theo}
  The only basic \(\GO(n)\)-invariant tensors of the standard representation \(V\) are the metric \(V\otimes V\to\R\), and the cometric \(\R\to V\otimes V\).
  Every \(\GO(n)\)-invariant mixed tensor can be built from these pairings and tensor index permutations.
\end{theo}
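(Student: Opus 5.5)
The plan mirrors the $\GL(n)$ and $\SL(n)$ cases. The real content is already in Corollary~\ref{coro:OSW}; what remains is to reduce from mixed tensors to purely covariant ones, where that corollary applies.

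\textbf{Step 1: reduce mixed tensors to covariant ones.} The metric gives a $\GO(n)$-equivariant isomorphism $V\to V^*$, $\vb*{v}\mapsto\langle\vb*{v},\cdot\rangle$. The cometric gives its inverse $V^*\to V$. Tensoring these maps on the appropriate factors yields a $\GO(n)$-equivariant linear isomorphism
\[
  T^{p,q}(V)\;\xrightarrow{\ \sim\ }\;T^{0,p+q}(V)=(V^{*})^{\otimes(p+q)}.
\]
This isomorphism is built from the metric and cometric alone, since raising and lowering indices are contractions with them. It therefore identifies invariant mixed tensors with invariant covariant tensors, and it suffices to show that every $\GO(n)$-invariant tensor in $(V^*)^{\otimes N}$ is built from the metric and index permutations.

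\textbf{Step 2: apply Corollary~\ref{coro:OSW}.} An element of $(V^*)^{\otimes N}$ is a multilinear form on $V^N$. It is $\GO(n)$-invariant exactly when it is an invariant polynomial in $\R[V^N]$ that is multihomogeneous of multidegree $(1,\dotsc,1)$. By Corollary~\ref{coro:OSW}, such a polynomial lies in the algebra generated by the polarized metrics $\langle\vb*{v}_i,\vb*{v}_j\rangle$. Since the $\Gamma$-action and products preserve multidegree, I can project onto the component of multidegree $(1,\dotsc,1)$. Only products $\prod_k\langle\vb*{v}_{i_k},\vb*{v}_{j_k}\rangle$ in which each index appears exactly once survive. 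So $N$ is even and the invariant is a linear combination of such products, each indexed by a perfect matching of $\{1,\dotsc,N\}$.

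\textbf{Step 3: interpret the products as tensors.} Each such product is the tensor $g^{\otimes N/2}$, with $g$ the metric, composed with an index permutation. Transporting back through the isomorphism of Step 1 writes every invariant mixed tensor as a combination of tensor products of metrics and cometrics together with index permutations. Equivalently, it is a complete contraction pattern of the upper and lower indices, using the duality pairing, which is itself a contraction of the metric with the cometric.

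\textbf{Main obstacle.} The delicate point is the bookkeeping in Step 2: checking that the corollary's statement ``algebraic combinations and polarization'' really yields only products of the bilinear brackets in the multilinear component. Polarization operators $\sigma_{\alpha\beta}$ applied to $\langle\vb*{v},\vb*{v}\rangle$ produce brackets $\langle\vb*{v}_\alpha,\vb*{v}_\beta\rangle$. Applied to products, they stay in the span of products of brackets by the Leibniz rule. This closure under the $\sigma_{\alpha\beta}$ is what I would verify carefully.
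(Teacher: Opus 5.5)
Your proposal is correct and follows the paper's own route: identify $V$ with $V^*$ through the metric and cometric, then invoke Corollary~\ref{coro:OSW}. You spell out the extraction of the multilinear component, which the paper leaves implicit. One small slip: the $\Gamma$-action does not preserve multidegree, only total degree. What you actually need, and what is true, is that the algebra generated by the brackets $\langle\vb*{v}_i,\vb*{v}_j\rangle$ is spanned by multihomogeneous bracket monomials, so it is closed under taking the $(1,\dotsc,1)$-component.
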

\begin{proof}
  Since the euclidean metric \(g \colon V \to V^*\) and the euclidean cometric \(g^{-1}\colon V^*\to V\)  are a pair of inverse \(\GO(n)\)-invariant tensors, we can identify \(V\) and \(V^*\) as needed in order to reduce the study of the tensor algebra \(T^\bullet(V)\). Then the results follows from Corollary~\ref{coro:OSW}.
\end{proof}

\begin{theo}
  The only basic \(\SO(n)\)-invariant tensors of the standard representation \(V\) are the metric \(V\otimes V\to\R\), the cometric \(\R\to V\otimes V\), and the determinant \(V^{\otimes n}\to\R\).
  Every \(\SO(n)\)-invariant mixed tensor can be built from these basic tensors and tensor index permutations.
\end{theo}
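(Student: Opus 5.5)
The plan is to mirror the proof of the \(\GO(n)\) theorem, with Corollary~\ref{coro:SOSW} in place of Corollary~\ref{coro:OSW}.

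First, the metric \(g\colon V\to V^*\) and the cometric \(g^{-1}\colon V^*\to V\) are \(\SO(n)\)-invariant, since \(\SO(n)\subseteq\GO(n)\), and they are inverse to each other. Contracting with \(g\) or \(g^{-1}\) therefore gives \(\SO(n)\)-equivariant isomorphisms \(T^{p,q}(V)\simeq T^{p+q,0}(V)\), and these maps are themselves built from the basic tensors. So it suffices to describe the \(\SO(n)\)-invariant multilinear forms \(V^{p}\to\R\), that is, the invariant tensors in \((V^{*})^{\otimes p}\). Conversely, every invariant in \(T^{p,q}\) is obtained from such a form by contracting its last \(q\) slots with cometrics.

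Next, an invariant multilinear form \(F\colon V^p\to\R\) is a multi-homogeneous polynomial of multidegree \((1,\dotsc,1)\) in \(\R[V^p]^{\SO(n)}\). By Corollary~\ref{coro:SOSW}, \(F\) is obtained by products, linear combinations and polarization from \(\langle\vb*{v}_i,\vb*{v}_j\rangle\) and \(\det(\vb*{v}_{i_1},\dotsc,\vb*{v}_{i_n})\). Polarizing a product in the direction \(\vb*{v}_\beta\mapsto\vb*{v}_\alpha\) acts on each factor by the Leibniz rule, and polarizing a metric or a determinant yields a metric or a determinant with substituted arguments. Hence \(F\) is a linear combination of products of terms \(\langle\vb*{w},\vb*{w}'\rangle\) and \(\det(\vb*{w}_1,\dotsc,\vb*{w}_n)\), where the arguments are among the \(\vb*{v}_\alpha\).

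We then project onto the multidegree \((1,\dotsc,1)\) component. Since the action preserves multidegree, we may keep only the monomials in which each \(\vb*{v}_\alpha\) occurs exactly once. Such a monomial is a tensor product of copies of the metric and of the determinant, with indices permuted according to which \(\vb*{v}_\alpha\) fills which slot. This gives the claimed form of every invariant. Determinant terms with a repeated argument vanish, and products of two determinants can be rewritten in terms of metrics via \(\det(\vb{M})\det(\vb{N})=\det(\vb{M}^*\vb{N})\), though that reduction is not needed here.

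The main obstacle is to make the step "polarization of products of basic invariants yields only products of basic invariants with permuted arguments" rigorous, and to ensure that the multilinear component is correctly extracted. I would handle this using the description of polarization by the operators \(\sigma_{\alpha\beta}\), which are derivations, and the fact that \(\langle\Gamma\ast S\rangle\) for the algebra generated by \(S\) is algebraically generated by \(\Gamma\ast S\), as remarked after Theorem~\ref{theo:polarization}. Elements of \(\Gamma\ast S\) are metrics and determinants evaluated on linear combinations of the \(\vb*{v}_\alpha\). Expanding these multilinearly gives the required terms, and taking the \((1,\dotsc,1)\) component finishes the argument.
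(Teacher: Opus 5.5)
Your proposal is correct and follows the paper's route. You use the \(\SO(n)\)-invariant metric/cometric pair to reduce mixed tensors to multilinear forms, then read those forms off Corollary~\ref{coro:SOSW}, and you also spell out what the paper leaves implicit: polarizations of products of metrics and determinants are again such products, and the multidegree \((1,\dotsc,1)\) component selects exactly the tensor products of basic tensors with permuted indices (one minor indexing slip: to recover \(T^{p,q}(V)\) from a form on \(V^{p+q}\) you raise \(p\) slots with the cometric, not the last \(q\)).
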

\begin{proof}
  Since the euclidean metric \(g \colon V \to V^*\) and the euclidean cometric \(g^{-1}\colon V^*\to V\)  are a pair of inverse \(\GO(n)\)-invariant tensors, we can identify \(V\) and \(V^*\) as needed in order to reduce the study of the tensor algebra \(T^\bullet(V)\). Then the results follows from Corollary~\ref{coro:SOSW}.
\end{proof}

\section{Tensor invariants on matrix spaces.}
We consider the \(\GL(V)\)-action on \(\End(V)\) by conjugation.
Since \(\End(V)\simeq V\otimes V^*\), one has \(\End(V)=\End(V)^*\).
Therefore, we consider the tensor algebra \(T^\bullet(\End(V))\).
The invariant tensors in \(\End(V)^{\otimes m}=\End(V^{\otimes m})\) are the endomorphisms \(V^{\otimes m}\to V^{\otimes m}\) that commute with the \(\GL(V)\)-action.
The group of permutations \(S_m\) acts linearly on \(V^{\otimes m}\) by tensor index permutations
\[
  \sigma\ast(\vb*{v}_1\otimes\dotsb\otimes \vb*{v}_m)
  =
  (\vb*{v}_{\sigma^{-1}(1)}\otimes\dotsb\otimes \vb*{v}_{\sigma^{-1}(m)}).
\]
The \textsl{group algebra} \(\R[S_m]\) is the vector space generated by permutations, equipped with the linear extension of the group product. 
Since the action of \(S_m\) commutes with the diagonal \(\GL(V)\)-action, one has
\[
  \R[S_m]
  \subseteq
  (\End(V^{\otimes m}))^{\GL(V)}.
\]
\textsl{Schur--Weyl duality} asserts that every \(\GL(V)\)-equivariant endomorphism of \(V^{\otimes m}\) actually arises from this action.

\begin{theo}
  The algebra of endomorphisms of \(V^{\otimes m}\) commuting with the \(\GL(V)\)-action is naturally isomorphic with the group algebra \(\R[S_m]\) of the symmetric group acting by permutation of the tensor factors.
\end{theo}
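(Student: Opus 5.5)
The plan is to read this as a consequence of the first fundamental theorem for \(\GL(V)\) proved in the previous section. We prove surjectivity of the natural map \(\R[S_m]\to\End(V^{\otimes m})^{\GL(V)}\) in general, and injectivity when \(n\geq m\). Using \(\End(V^{\otimes m})\simeq V^{\otimes m}\otimes V^{*\otimes m}=T^{m,m}(V)\), an element \(\Phi\in\End(V^{\otimes m})\) corresponds to the multilinear form
\[
  F_\Phi(\vb*{v}_1,\dotsc,\vb*{v}_m;f^1,\dotsc,f^m)
  \bydef
  (f^1\otimes\dotsb\otimes f^m)\big(\Phi(\vb*{v}_1\otimes\dotsb\otimes\vb*{v}_m)\big).
\]
This identification commutes with the \(\GL(V)\)-actions. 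Hence \(\Phi\) commutes with \(\GL(V)\) if and only if \(F_\Phi\) is an invariant multilinear form, that is, a \(\GL(n)\)-invariant in \(\R[V^m\oplus V^{*m}]\) of multidegree \((1,\dotsc,1;1,\dotsc,1)\).

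By Corollary~\ref{coro:SW}, together with the remark after Theorem~\ref{theo:polarization} that polarization of an algebra generated by a set \(S\) is generated by \(\Gamma\ast S\), the invariant \(F_\Phi\) is a polynomial in the brackets \(f^i(\vb*{v}_j)\), \(1\leq i,j\leq m\). The key step is to extract the correct multihomogeneous component. Each bracket \(f^i(\vb*{v}_j)\) has degree one in \(f^i\), degree one in \(\vb*{v}_j\), and degree zero in every other variable. The action of the diagonal scaling operators \(\sigma_{\alpha\alpha}\) (for vectors and for covectors) preserves the set of monomials in brackets, so we may replace the polynomial expression by its multidegree \((1,\dotsc,1;1,\dotsc,1)\) part, which still equals \(F_\Phi\).

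A monomial in brackets of that multidegree uses each \(f^i\) exactly once and each \(\vb*{v}_j\) exactly once. It is therefore of the form \(\prod_{j} f^{\tau(j)}(\vb*{v}_j)\) for a permutation \(\tau\in S_m\). A direct computation identifies this form with \(F_{\Phi_\tau}\), where \(\Phi_\tau\) is the index-permutation operator attached to \(\tau\) (or to \(\tau^{-1}\), depending on the convention \(\sigma\ast(\vb*{v}_1\otimes\dotsb)\) fixed above). Hence \(\Phi\) is a linear combination of permutation operators, which proves surjectivity.

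For injectivity when \(n\geq m\), take \(\vb*{e}_1,\dotsc,\vb*{e}_m\) distinct basis vectors. The images \(\sigma\ast(\vb*{e}_1\otimes\dotsb\otimes\vb*{e}_m)\) are pairwise distinct elements of the tensor basis of \(V^{\otimes m}\), so a vanishing combination \(\sum c_\sigma\sigma\) forces all \(c_\sigma=0\).

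I expect the main obstacle to be the statement of the theorem itself rather than the argument. For \(n<m\) the map is not injective, since the antisymmetrizer over \(n+1\) indices acts as zero on \(V^{\otimes m}\). So ``naturally isomorphic'' must be read as: \((\End V^{\otimes m})^{\GL(V)}\) is the image of \(\R[S_m]\), and the two are isomorphic exactly when \(n\geq m\). I would state this explicitly in the proof. The only technical point to check carefully is the component extraction, namely that the homogeneous components of a polynomial in brackets are again polynomials in brackets; this holds because each bracket is multihomogeneous.
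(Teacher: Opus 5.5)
Your argument is correct and is essentially the paper's. Both identify \(\End(V^{\otimes m})\) with \(V^{\otimes m}\otimes V^{*\otimes m}\), use the first fundamental theorem for \(\GL(V)\) to see that the invariant multilinear forms are spanned by the complete contractions \(T^\sigma=\prod_j f^{\sigma(j)}(\vb*{v}_j)\), and match \(T^\sigma\) with the permutation operator \(\sigma\ast\). Your additions are all sound: extracting the multidegree \((1,\dotsc,1;1,\dotsc,1)\) component, injectivity via \(\vb*{e}_1\otimes\dotsb\otimes\vb*{e}_m\) when \(n\geq m\), and the antisymmetrizer on \(n+1\) factors showing non-injectivity when \(n<m\). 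They make explicit that the paper's proof only shows that \(\R[S_m]\) surjects onto the commutant, so ``isomorphic'' indeed requires \(n\geq m\).
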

\begin{proof}
  The representation \(\End(V^{\otimes m})\), with the action of \(\GL(V)\) by conjugation identifies with the representation \(V^m\otimes V^{*m}\), with the standard diagonal action of \(\GL(V)\).
  But we know that any invariant of \(V^m\otimes V^{*m}\) is built from contractions.
  Hence the basic invariants are complete contractions
  \[
    T^{\sigma}
    \colon
    (\vb*{v}_1,\dotsc,\vb*{v}_m;f^1,\dotsc,f^m)
    \mapsto
    f^{\sigma(1)}(\vb*{v}_1)\dotsm f^{\sigma(m)}(\vb*{v}_m).
  \]
  One has
  \[
    T^\sigma(\vb*{e}_{i_1},\dotsc,\vb*{e}_{i_m};\vb*{e}^{j_1},\dotsc,\vb*{e}^{j_m})
    =
    \delta_{i_1}^{\sigma(j_{1})}
    \dotsm
    \delta_{i_m}^{\sigma(j_{m})}
    =
    \vb*{e}^{j_1}\otimes\dotsb\otimes\vb*{e}^{j_m}
    \big(\sigma\ast(\vb*{e}_{i_1}\otimes\dotsb\otimes\vb*{e}_{i_m})\big).
  \]
  Therefore, \(T_\sigma\) and \(\sigma\ast\) are identified through the isomorphism \(V^m\otimes V^{*m}=\End(V^{\otimes m})\).
\end{proof}

\begin{theo}
  The only basic \(\GL(n)\)-invariant mixed tensors of the action on \(\End(V)\) by conjugation are the trace \(\End(V)\to\R\) and the matrix product \(\End(V)\otimes\End(V)\to\End(V)\).
  Every \(\GL(n)\)-invariant tensor can be built from these basic tensors and tensor index permutations.
\end{theo}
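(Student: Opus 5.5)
The plan is to reduce the statement to the Schur--Weyl duality theorem just proved, through the identification \(\End(V)=V\otimes V^*\).

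First, a \(\GL(V)\)-invariant mixed tensor on \(\End(V)\) is an invariant element of \(\End(V)^{\otimes p}\otimes(\End(V)^*)^{\otimes q}\). Since \(\End(V)^*\simeq\End(V)\) equivariantly (through the trace pairing \((\vb{A},\vb{B})\mapsto\Trace(\vb{AB})\), which is itself built from the trace and the product), we may assume \(q=0\). Then \(\End(V)^{\otimes p}=\End(V^{\otimes p})\), and the invariants are the \(\GL(V)\)-equivariant endomorphisms of \(V^{\otimes p}\). By the previous theorem these form \(\R[S_p]\). So it suffices to express each permutation operator \(\sigma\ast\), viewed as an element of \(\End(V)^{\otimes p}\), or dually as the multilinear form \((\vb{A}_1,\dotsc,\vb{A}_p)\mapsto\Trace(\sigma\circ(\vb{A}_1\otimes\dotsb\otimes\vb{A}_p))\), in terms of traces and products.

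The key computation is the classical trace formula. Decompose \(\sigma\) into disjoint cycles \((i_1\,i_2\,\dotsc\,i_k)\). Then
\[
  \Trace\big(\sigma\circ(\vb{A}_1\otimes\dotsb\otimes\vb{A}_p)\big)
  =
  \prod_{\text{cycles}}\Trace(\vb{A}_{i_k}\dotsm\vb{A}_{i_2}\vb{A}_{i_1}),
\]
up to the orientation convention for the cycles. I would verify this on elementary tensors \(\vb{A}_\alpha=\vb*{v}_\alpha\otimes f^\alpha\). There the left side is the contraction \(T^{\sigma}\) from the preceding proof, namely \(\prod_\alpha f^{\sigma(\alpha)}(\vb*{v}_\alpha)\) or its inverse variant. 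Grouping the factors along a cycle gives \(f^{i_2}(\vb*{v}_{i_1})f^{i_3}(\vb*{v}_{i_2})\dotsm f^{i_1}(\vb*{v}_{i_k})\), which equals \(\Trace\) of the corresponding product of rank-one matrices. Multilinearity then extends the identity to all \(\vb{A}_\alpha\). Every invariant multilinear form on \(\End(V)^p\) is therefore a linear combination of products of traces of products. Mixed tensors with outputs in \(\End(V)\), such as the product map itself, are obtained by dualizing one slot with the trace pairing: \(\Trace(\vb{A}_1\dotsm\vb{A}_k\vb{B})\) corresponds to the tensor \(\vb{A}_1\otimes\dotsb\otimes\vb{A}_k\mapsto\vb{A}_1\dotsm\vb{A}_k\).

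Finally, the question of \emph{basic} invariants. The trace and the product are invariant, and neither can be generated from the other alone: the product never produces scalars, and the trace alone never produces a map \(\End(V)^{\otimes 2}\to\End(V)\). The main obstacle is bookkeeping rather than substance. One must fix the direction conventions (\(\sigma\) versus \(\sigma^{-1}\), and the cycle orientation) consistently with the identification used in the Schur--Weyl proof, and handle the passage between \(\End(V)\) and \(\End(V)^*\) cleanly when tensors have mixed variance.
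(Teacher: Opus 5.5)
Your proposal is correct and is essentially the paper's proof: reduce through \(\End(V)\simeq V\otimes V^*\) to the Schur--Weyl contractions \(T^\sigma\), then decompose \(\sigma\) into disjoint cycles and recognize each \(T^\sigma\) as a product of traces of matrix products taken along the cycles. Your check on rank-one tensors \(\vb{A}_\alpha=\vb*{v}_\alpha\otimes f^\alpha\) makes explicit what the paper leaves implicit, and your reversed ordering \(\vb{A}_{i_k}\dotsm\vb{A}_{i_1}\) is the one that matches the paper's \(T^\sigma\); the paper's \(P_\tau=\vb{A}_a\vb{A}_b\dotsm\vb{A}_z\) yields \(T^{\sigma^{-1}}\), which is harmless since \(\sigma\) ranges over all of \(S_m\).
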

\begin{proof}
  Any permutation \(\sigma\) can be written uniquely as a product of disjoint cycles \(\tau_1,\dotsc,\tau_k\).
  For each cycle \(\tau=(a,b,\dotsc,z)\) (even for \(1\)-cycles), take the product of matrices 
  \[
    P_\tau
    =
    \vb{A}_a \vb{A}_b\dotsm \vb{A}_z.
  \]
  Then
  \[
    T^\sigma(\vb{A}_1,\dotsc,\vb{A}_m)
    =
    \Trace(P_{\tau_{1}})
    \dotsm
    \Trace(P_{\tau_{k}}).
  \]
  See the examples below for a clearer illustration.
\end{proof}
\begin{exam}
  The \(2\) basic invariants tensors of type \((2,0)\) are:
  \[
    T^{\id}(\vb{A}_1\otimes \vb{A}_2)
    =
    \delta_{i_1}^{j_1}\delta_{i_2}^{j_2}(\vb{A}_1)^{i_1}_{j_1} (\vb{A}_2)_{i_2}^{j_2}
    =
    \Trace(\vb{A}_1)\Trace(\vb{A}_2)
  \]
  and
  \[
    T^{(1,2)}(\vb{A}_1\otimes \vb{A}_2)
    =
    \delta_{i_1}^{j_2}\delta_{i_2}^{j_1}(\vb{A}_1)^{i_1}_{j_1} (\vb{A}_2)_{i_2}^{j_2}
    =
    \Trace(\vb{A}_1\vb{A}_2).
  \]
  The \(6\) basic invariants tensors of type \((3,0)\) are:
  \[
    \{\Trace(A_1)\Trace(A_2)\Trace(A_3);\Trace(A_i)\Trace(A_jA_k);\Trace(A_iA_jA_k)\}.
  \]
\end{exam}

\begin{theo}
  The only basic \(\GO(n)\)-invariant mixed tensors of the action on \(\End(V)\) by conjugation are the trace \(\End(V)\to\R\), the transpose \(\End(V)\to\End(V)\) and the matrix product \(\End(V)\otimes\End(V)\to\End(V)\).
  Every \(\GO(n)\)-invariant tensor can be built from these basic tensors and tensor index permutations.
\end{theo}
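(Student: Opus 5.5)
We mimic the proof of the preceding theorem for \(\GL(n)\), replacing the duality-bracket description with the \(\GO(n)\) result of the previous section.

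\emph{Step 1: translate to multilinear invariants.} Using the identification \(\End(V)=V\otimes V^*\), a \(\GO(n)\)-invariant tensor on \(\End(V)^{\otimes m}\) becomes a \(\GO(n)\)-invariant multilinear form in \(m\) vectors \(\vb*{v}_1,\dotsc,\vb*{v}_m\) and \(m\) covectors \(f^1,\dotsc,f^m\). The euclidean metric \(g\colon V\to V^*\) is \(\GO(n)\)-equivariant, so each covector can be traded for a vector \(\vb*{w}_\alpha=g^{-1}f^\alpha\). We obtain a \(\GO(n)\)-invariant multilinear form in the \(2m\) vectors \(\vb*{v}_1,\dotsc,\vb*{v}_m,\vb*{w}_1,\dotsc,\vb*{w}_m\).

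\emph{Step 2: apply the orthogonal tensor theorem.} By the theorem on \(\GO(n)\)-invariant tensors of the standard representation (that is, by Corollary~\ref{coro:OSW} together with multilinearity), such a form is a linear combination of complete contractions. These are products of scalar products \(\langle \vb*{x},\vb*{y}\rangle\) indexed by perfect matchings of the \(2m\) slots. Each pairing has one of three types:
\begin{itemize}
\item \(\langle \vb*{w}_\alpha,\vb*{v}_\beta\rangle=f^\alpha(\vb*{v}_\beta)\), the duality bracket;
\item \(\langle \vb*{v}_\alpha,\vb*{v}_\beta\rangle\), the metric;
\item \(\langle \vb*{w}_\alpha,\vb*{w}_\beta\rangle\), the cometric.
\end{itemize}
For \(\GL(n)\) only the first type occurred, and the matchings were permutations \(\sigma\in S_m\).

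\emph{Step 3: read the matchings as matrix words.} Consider the graph on the \(m\) matrices \(\vb{A}_1,\dotsc,\vb{A}_m\). Each \(\vb{A}_\alpha\) has two half-edges: its row slot \(f^\alpha\) and its column slot \(\vb*{v}_\alpha\). The matching joins these half-edges, and every vertex has degree two, so the graph is a disjoint union of cycles. Along a cycle, traverse each matrix either as \(\vb{A}_\alpha\) (entering by the column slot) or as \({}^t\vb{A}_\alpha\) (entering by the row slot). The transpose is exactly what converts a metric or cometric pairing into an ordinary index contraction, because \(({}^t\vb{A})^i_j=g_{jk}\,a^k_\ell\,g^{\ell i}\). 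Hence each cycle contributes a factor \(\Trace(\vb{A}_a^{\epsilon_a}\dotsm \vb{A}_z^{\epsilon_z})\) with \(\epsilon\in\{1,t\}\). We would check this on small examples; for instance the matching \(\langle \vb*{v}_1,\vb*{v}_2\rangle\langle \vb*{w}_1,\vb*{w}_2\rangle\) gives \(\Trace(\vb{A}_1{}^t\vb{A}_2)\). Thus every invariant is a linear combination of products of traces of words in the matrices and their transposes. This is built from trace, transpose and matrix product, and the tensor-valued invariants follow by the same tensor-index identification.

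\emph{Main obstacle.} The delicate step is the bookkeeping in Step 3. One must orient each cycle consistently, choosing a starting slot and a direction, so that every edge becomes either a standard contraction between consecutive factors or, after inserting a transpose, such a contraction. The sign and index conventions for \({}^t\vb{A}\) relative to \(g\) must be verified carefully. I would first handle cycles of length one and two explicitly, then argue by induction on the cycle length.
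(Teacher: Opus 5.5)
Your proposal is correct and is essentially the paper's argument. The paper also reduces to complete contractions with the metric and cometric, that is, to $2$-partitions (perfect matchings) of the index set $\{i_1,\dotsc,i_m,j_1,\dotsc,j_m\}$. It then splits each matching into disjoint loops by alternating between the pairing and the other index of the same factor, and reads each loop as the trace of a word in the $A_x$ and their transposes. Your half-edge graph, built after trading covectors for vectors via the metric, is a repackaging of those loops, and your check $\langle v_1,v_2\rangle\langle w_1,w_2\rangle\mapsto\Trace(A_1{}^tA_2)$ matches the paper's example $\delta_{i_1i_2}\delta^{j_1j_2}\mapsto\Trace(A_1A_2^*)$.
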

\begin{proof}
  The basic invariants are complete contractions with the metric and the cometric.
  So product of symbols \(\delta_{i,i'}\) and \(\delta^{j,j'}\) such that the non-repeating indices are \(\{i_1,\dotsc,i_m\}\) in the lower part and \(\{j_1,\dotsc,j_m\}\) in the upper part.
  In Einstein notation, one has \(\delta_{ik}\delta^{kj}=\delta_{i}^{j}\), so we can eliminate all repeating indices.
  We end up with a symbol which is in bijection with \(2\)-partitions \(\pi\) of the set of indices \(\Set{i_1,\dotsc,i_m,j_1,\dotsc,j_m}\).
  A loop is obtained as follows: starting from an index, follow the pairing, and then go to the lower/upper index of the same factor, and continue this alternation until you return from the starting index. Every \(2\)-partition of the set of indices decomposes in a union of disjoint loop \(\tau_1,\dotsc,\tau_k\).
  For each loop \(\tau\), form a product \(P_\tau\) in the order in which the indices \(a,b,\dotsc,z\) appear, with \(\vb{A}_x\) if the loop goes from \(i_x\) to \(j_x\) and with \(\vb{A}_x^*\) if the loop goes from \(j_x\) to \(i_x\).
  Then
  \[
    T^\pi(\vb{A}_1,\dotsc,\vb{A}_m)
    =
    \Trace(P_{\tau_1})
    \dotsm
    \Trace(P_{\tau_k}).
  \]
  See some basic examples below.
\end{proof}

\begin{exam}
  The \(3\) basic \(\GO(n)\)-invariants tensors \(\End(V)^{\otimes 2}\to\R\) are
  \[
    \{\Trace(\vb{A}_1)\Trace(\vb{A}_2), \Trace(\vb{A}_1\vb{A}_2), \Trace(\vb{A}_1\vb{A}_2^*)\}.
  \]
  Corresponding respectively to
  \[
    \delta_{i_1}^{j_1}\delta_{i_2}^{j_2},\quad
    \delta_{i_1}^{j_2}\delta_{i_2}^{j_1}\quad\text{and}\quad
    \delta_{i_1i_2}\delta^{j_1j_2}.
  \]
  It is in bijection with the set of \(2\)-partitions of \(\Set{i_1,i_2,j_1,j_2}\).
  The graphs of the \(2\)-partitions are:
  \[
    \begin{tikzpicture}
      [>=stealth,pairing/.style={thick,->}, jump/.style={dashed,->}]
      \begin{scope}
	\node (i1) at (0,0) {\(i_1\)};
	\node (i2) at (2,0) {\(i_2\)};
	\node (j1) at (0,2) {\(j_1\)};
	\node (j2) at (2,2) {\(j_2\)};

	\draw[jump,bend right] (i1) to (j1);
	\draw[pairing,bend right] (j1) to (i1);
	\draw[jump,bend right] (i2) to (j2);
	\draw[pairing,bend right] (j2) to (i2);

	\node at (1,-0.7) {\(\Trace(\vb{A}_1)\Trace(\vb{A}_2)\)};
      \end{scope}
      \begin{scope}[xshift=4cm]
	\node (i1) at (0,0) {\(i_1\)};
	\node (i2) at (2,0) {\(i_2\)};
	\node (j1) at (0,2) {\(j_1\)};
	\node (j2) at (2,2) {\(j_2\)};

	\draw[jump] (i1) to (j1);
	\draw[pairing] (j1) to (i2);
	\draw[jump] (i2) to (j2);
	\draw[pairing] (j2) to (i1);

	\node at (1,-0.7) {\(\Trace(\vb{A}_1\vb{A}_2)\)};
      \end{scope}
      \begin{scope}[xshift=8cm]
	\node (i1) at (0,0) {\(i_1\)};
	\node (i2) at (2,0) {\(i_2\)};
	\node (j1) at (0,2) {\(j_1\)};
	\node (j2) at (2,2) {\(j_2\)};

	\draw[jump] (i1) to (j1);
	\draw[pairing,bend right] (j1) to (j2);
	\draw[jump] (j2) to (i2);
	\draw[pairing,bend right] (i2) to (i1);

	\node at (1,-0.7) {\(\Trace(\vb{A}_1\vb{A}_2^*)\)};
      \end{scope}
    \end{tikzpicture}
  \]
\end{exam}

\begin{exam}
  The \(15\) basic \(\GO(n)\)-invariants tensors \(\End(V)^{\otimes 3}\to\R\) are
  \begin{multline*}
    \{\Trace(\vb{A}_1)\Trace(\vb{A}_2)\Trace(\vb{A}_3);\Trace(\vb{A}_i)\Trace(\vb{A}_j\vb{A}_k);\Trace(\vb{A}_i)\Trace(\vb{A}_j\vb{A}_k^*);
      \\
    \Trace(\vb{A}_i\vb{A}_j\vb{A}_k);\Trace(\vb{A}_i^*\vb{A}_j\vb{A}_k);\Trace(\vb{A}_i\vb{A}_j^*\vb{A}_k);\Trace(\vb{A}_i\vb{A}_j\vb{A}_k^*)\}.
  \end{multline*}
  It is in bijection with the set of \(2\)-partitions of \(\Set{i_1,i_2,i_3,j_1,j_2,j_3}\).
  The tensors in the first line are obained from the tensors of type \((2,0)\) by adding a trivial loop.
  The four tensors in the second line corresponds to the loops:
  \[
    \begin{tikzpicture}
      [>=stealth,pairing/.style={thick,->}, jump/.style={dashed,->}]
      \begin{scope}
	\node (i1) at (0,0) {\(i_1\)};
	\node (i2) at (2,0) {\(i_2\)};
	\node (i3) at (4,0) {\(i_3\)};
	\node (j1) at (0,2) {\(j_1\)};
	\node (j2) at (2,2) {\(j_2\)};
	\node (j3) at (4,2) {\(j_3\)};

	\draw[jump](i1)--(j1);
	\draw[pairing] (j1) -- (i2);
	\draw[jump](i2)--(j2);
	\draw[pairing] (j2) -- (i3);
	\draw[jump](i3)--(j3);
	\draw[pairing] (j3) -- (i1);

	\node at (2,-0.8) {\(\Trace(\vb{A}_1\vb{A}_2\vb{A}_3)\)};
      \end{scope}
      \begin{scope}[xshift=7cm]
	\node (i1) at (0,0) {\(i_1\)};
	\node (i2) at (2,0) {\(i_2\)};
	\node (i3) at (4,0) {\(i_3\)};
	\node (j1) at (0,2) {\(j_1\)};
	\node (j2) at (2,2) {\(j_2\)};
	\node (j3) at (4,2) {\(j_3\)};

	\draw[jump](j1)--(i1);
	\draw[pairing,bend left] (i1) to (i2);
	\draw[jump](i2)--(j2);
	\draw[pairing] (j2) -- (i3);
	\draw[jump](i3)--(j3);
	\draw[pairing,bend left] (j3) to (j1);

	\node at (2,-0.8) {\(\Trace(\vb{A}_1^*\vb{A}_2\vb{A}_3)\)};
      \end{scope}
      \begin{scope}[yshift=-4cm]
	\node (i1) at (0,0) {\(i_1\)};
	\node (i2) at (2,0) {\(i_2\)};
	\node (i3) at (4,0) {\(i_3\)};
	\node (j1) at (0,2) {\(j_1\)};
	\node (j2) at (2,2) {\(j_2\)};
	\node (j3) at (4,2) {\(j_3\)};

	\draw[jump](i1)--(j1);
	\draw[pairing,bend right] (j1) to (j2);
	\draw[jump](j2)--(i2);
	\draw[pairing,bend left] (i2) to (i3);
	\draw[jump](i3)--(j3);
	\draw[pairing] (j3) -- (i1);

	\node at (2,-0.8) {\(\Trace(\vb{A}_1\vb{A}_2^*\vb{A}_3)\)};
      \end{scope}
      \begin{scope}[xshift=7cm,yshift=-4cm]
	\node (i1) at (0,0) {\(i_1\)};
	\node (i2) at (2,0) {\(i_2\)};
	\node (i3) at (4,0) {\(i_3\)};
	\node (j1) at (0,2) {\(j_1\)};
	\node (j2) at (2,2) {\(j_2\)};
	\node (j3) at (4,2) {\(j_3\)};

	\draw[jump](i1)--(j1);
	\draw[pairing] (j1) -- (i2);
	\draw[jump](i2)--(j2);
	\draw[pairing,bend right] (j2) to (j3);
	\draw[jump](j3)--(i3);
	\draw[pairing,bend right] (i3) to (i1);

	\node at (2,-0.8) {\(\Trace(\vb{A}_1\vb{A}_2\vb{A}_3^*)\)};
      \end{scope}
    \end{tikzpicture}
  \]
\end{exam}

\section{Isotropic invariants of second order tensors.}
In this last section, we illustrate how to recover polynomial invariants from the tensor invariants on a chosen non-trivial example.  The direct computation of bi-invariants would be difficult, so this is a good demonstration of the relevance of Weyl's method.

\begin{theo}[\cite{Boe87}*{p.~157}]
  Let \(V=\End(\R^3)\), and let \(G=\GO(3)\), acting on \(V\) by conjugation.
  The algebra \(\R[V]^G\) is generated by the seven basic invariants 
  \[
    \{\det(\vb{A}),\Trace(\vb{A}),\Trace(\vb{A}^2),\Trace(\vb{A}\vb{A}^*),\Trace(\vb{A}^2\vb{A}^{*}),\Trace(\vb{A}^2\vb{A}^{*2}),\Trace(\vb{A}\vb{A}^*\vb{A}^2\vb{A}^{*2})\}.
  \]
\end{theo}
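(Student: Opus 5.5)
We plan to combine polarization/restitution with the description of \(\GO(n)\)-invariant tensors on \(\End(V)\) obtained just above. Let \(f\in\R[V]^G\) be homogeneous of degree \(d\). By Theorem~\ref{theo:linearization}, its complete polarization \(\pol_d(f)\) is a symmetric \(\GO(3)\)-invariant multilinear form on \(\End(\R^3)^d\), and \(f=\rest_d(\pol_d(f))\). By the last theorem of the previous section, \(\pol_d(f)\) is a linear combination of the basic tensors \(T^\pi\). Each \(T^\pi\) is a product of factors \(\Trace(P_\tau)\), where \(P_\tau\) is a word in the \(\vb{A}_x\) and \(\vb{A}_x^*\). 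Restituting to the diagonal \(\vb{A}_1=\dotsb=\vb{A}_d=\vb{A}\) therefore shows that \(\R[V]^G\) is spanned by products of traces of words \(\Trace(w(\vb{A},\vb{A}^*))\) in the two letters \(\vb{A},\vb{A}^*\). The problem thus reduces to showing that every such trace is a polynomial in the seven listed invariants.

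The reduction uses the linear relations among words that are valid in dimension \(3\). We would use three of them.
\begin{enumerate}
\item The invariance of the trace under cyclic permutation and under transposition, \(\Trace(w)=\Trace(w^*)\).
\item The Cayley--Hamilton theorem for \(3\times 3\) matrices. It expresses \(\vb{A}^3\) as a combination of \(\vb{A}^2,\vb{A},I\) with coefficients in \(\Trace(\vb{A}),\Trace(\vb{A}^2),\det(\vb{A})\), and similarly for \(\vb{A}^{*3}\).
\item Most importantly, the polarized Cayley--Hamilton identity, obtained by complete polarization \(\pol_3\) of \(X\mapsto X^3-\Trace(X)X^2+\dotsb\) (the multilinear identity in three matrices \(X,Y,Z\)). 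We specialize it to \(X,Y,Z\in\{\vb{A},\vb{A}^*,\text{words}\}\). It rewrites any word containing a symmetrized product of three factors as shorter words times traces.
\end{enumerate}
Note also that \(\vb{A}+\vb{A}^*\) and \(\vb{A}-\vb{A}^*\) are symmetric and skew. One may therefore alternatively write \(\vb{A}=S+W\) and work with invariants of a symmetric \(S\) and a skew \(W\), where \(W\) corresponds to a vector \(\vb*{w}\) via the cross product. This gives the familiar list \(\Trace S,\Trace S^2,\Trace S^3,\lvert\vb*{w}\rvert^2,\vb*{w}\cdot S\vb*{w},\vb*{w}\cdot S^2\vb*{w},\det(\vb*{w},S\vb*{w},S^2\vb*{w})\). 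We would check that this list and the seven given traces generate the same algebra, by a triangular change of generators degree by degree.

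The order of steps is as follows. First, show that only words in which \(\vb{A}\) and \(\vb{A}^*\) each occur with exponent at most \(2\) in each block are needed, using Cayley--Hamilton. Second, induct on the length of the word, using polarized Cayley--Hamilton to bound the number of alternations between blocks. We expect to be able to reduce to words of length at most \(6\), by an argument in the spirit of Razmyslov--Procesi degree bounds for \(3\times3\) matrices. Third, enumerate the remaining words up to cyclic rotation and transposition. Show that each one reduces to the seven generators or to products of lower-degree traces; for instance, \(\Trace(\vb{A}\vb{A}^{*}\vb{A}\vb{A}^{*})\) must be expressed in terms of the others. In the \((S,W)\) picture, this verification reduces to checking that scalar invariants of a symmetric matrix and a vector are generated by \(\Trace S^k\) (\(k\le 3\)) and \(\vb*{w}\cdot S^k\vb*{w}\) (\(k\le 2\)). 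The remaining pseudo-scalar invariants are multiples of \(\det(\vb*{w},S\vb*{w},S^2\vb*{w})\), which corresponds to the degree-\(6\) generator \(\Trace(\vb{A}\vb{A}^*\vb{A}^2\vb{A}^{*2})\) up to lower terms.

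The main obstacle is the second and third steps: the combinatorial bookkeeping showing that all long mixed words collapse, and in particular that the skew contribution \(\vb*{w}\mapsto\det(\vb*{w},S\vb*{w},S^2\vb*{w})\) is the only new invariant needed beyond degree \(4\). We would first attempt this through the \(S,W\) decomposition. In that setting one can diagonalize \(S\) by an orthogonal matrix (on a dense open set), so that bi-invariant normal-form arguments of the type used for \(\GO(n)\) in Part~II give direct control over the remaining invariants. We would then translate back to traces in \(\vb{A},\vb{A}^*\).
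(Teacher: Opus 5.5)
\paragraph{The gap: bounding the number of blocks.}
Your first reduction agrees with the paper: complete polarization, the theorem on $\GO(n)$-invariant tensors of $\End(V)$, and restitution to products of traces of words in $\vb{A},\vb{A}^*$. So does the Cayley--Hamilton bound on exponents. The gap is the step that actually proves the theorem. You only announce that the number of alternating blocks can be bounded (``we expect to be able to reduce to words of length at most $6$''), and your item (3) does not do this by itself.

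Multiplying the polarized Cayley--Hamilton identity by a fourth matrix and taking traces gives $\sum_{\sigma\in\mathfrak{S}_4}(-1)^\sigma T^\sigma(P,Q,R,S)=0$. This identity is homogeneous, and its six single-trace terms all have the same length, so an induction on length cannot run on it. What drops is the number of blocks, and only for a well-chosen specialization.

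The missing idea is the paper's choice $P=\vb{A}^{\ell_1}$, $R=\vb{A}^{\ell_i}$, with $Q$ and $S$ the two segments between them (each beginning and ending with an $\vb{A}^*$-block). In every term other than $\Trace(PQRS)$ and $\Trace(RQPS)$, either two like blocks become adjacent and merge or the trace splits. Hence
\[
\Trace(PQRS)\equiv-\Trace(RQPS)
\]
modulo products of traces of shorter words and traces of words with fewer blocks. If $\ell_1=\ell_i$, the two words coincide and the word reduces. Since every $\ell_i\in\{1,2\}$, pigeonhole leaves at most two $\vb{A}$-blocks, and transposition does the same for $\vb{A}^*$. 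The same congruence then identifies the surviving two-block words with $\pm\Trace(\vb{A}\vb{A}^*\vb{A}^2\vb{A}^{*2})$.

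An appeal to Razmyslov--Procesi-type bounds does not replace this. Even granting the sharp bound $6$ for $3\times 3$ matrices (Spencer's lemma, quoted after the theorem), you would still have to reduce every word of length at most $6$, such as $\Trace(\vb{A}\vb{A}^*\vb{A}\vb{A}^{*2})$. That is your unperformed third step.

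\paragraph{The alternative route through $\vb{A}=S+W$.}
This is a genuinely different route and can be made to work, but as written it assumes its own core: that the invariants of a symmetric $S$ and the axial vector $\vb*{w}$ of $W$ are generated by your seven. (Here $\GO(3)$- and $\SO(3)$-invariants agree, since $-I$ acts trivially by conjugation.)

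Your suggested proof by diagonalizing $S$ is not a transcription of Part~II. On the slice of diagonal $S$ you only get invariance under a finite group of signed permutations. Expressing the restriction through $\Trace S^k$ and $\vb*{w}\cdot S^k\vb*{w}$ means solving for the $w_i^2$, which introduces Vandermonde denominators in the eigenvalues. Clearing them requires the extra $\SO(2)$-invariance at coincident eigenvalues, which you do not address.

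The clean way to close this route with the paper's tools is to apply the $\SO(3)$ tensor theorem to $(S,\vb*{w})$ directly. Complete contractions give $\Trace S^a$, $\vb*{w}\cdot S^a\vb*{w}$, and, with a single $\epsilon$, only $\det(S^a\vb*{w},S^b\vb*{w},S^c\vb*{w})$; a chain joining two legs of $\epsilon$ is symmetric and is killed. Cayley--Hamilton then leaves your list.

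The triangular change of generators does go through. For instance, $\Trace(\vb{A}^2\vb{A}^{*2})\equiv-2\Trace(SWSW)\equiv 4\,\vb*{w}\cdot S^2\vb*{w}$ modulo products. The degree-$6$ coefficient must be checked: the $W$-odd part of $\Trace(\vb{A}\vb{A}^*\vb{A}^2\vb{A}^{*2})$ has to be a nonzero multiple of $\det(\vb*{w},S\vb*{w},S^2\vb*{w})$. It is: for $S=\diag(0,1,2)$ and $\vb*{w}=(1,1,1)$ one gets $\Trace(\vb{A}\vb{A}^*\vb{A}^2\vb{A}^{*2})-\Trace(\vb{A}^2\vb{A}^*\vb{A}\vb{A}^{*2})=32$.

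Completed this way, your route trades the paper's block-swapping combinatorics for a second use of the first fundamental theorem plus explicit bookkeeping in low degrees.
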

\begin{proof}
  The basic tensor invariants of \(\End(\R^3)\) are the trace, the transpose, and the matrix product.
  Therefore \(\R[V]^G\) is generated by the trace of words made of \(\vb{A}\), and \(\vb{A}^*\):
  \[
    \Trace(\vb{A}^{\ell_1}\vb{A}^{*\ell_1'}\dotsm\vb{A}^{\ell_k}\vb{A}^{*\ell_k'}).
  \]
  Since
  \[
    \Trace(PQR)
    =
    \Trace(RPQ),
    \qand
    \Trace(PQ)
    =
    \Trace(Q^*P^*),
  \]
  one can always assume that \(\ell_1>0\), and also that if \(k\geq 2\) then \(\ell_k'>0\).
  By Cayley--Hamilton, for any matrix \(M\)
  \[
    M^3-\Trace(M)M^2+\frac{1}{2}(\Trace(M)^2-\Trace(M^2))M-\det(M)I.
  \]
  Therefore, if we add \(\det(\vb{A})\) to the family of invariants, by linearity, we can restrict to words with \(\ell_i\leq2\).

  The generating family of invariants that we get is still infinite.
  To simplify it further, one uses trace identities (\cites{RS59,P76,Boe87}).
  The deviatoric part of (\(6\times\)) the Cayley--Hamilton identity is
  \[
    \big(6M^3 -2\Trace(M^3)\big)
    -
    \Trace(M)\big(6M^2-2\Trace(M^2)\big)
    +
    (\Trace(M)^2-\Trace(M^2))\big(3M-\Trace(M)\big)
    =
    0.
  \]
  Multiplying by a matrix \(S\) and taking the trace, we get that
  \begin{multline*}
    \Trace(M)^3\Trace(S)
    -
    3\Trace(MS)\Trace(M)^2
    -
    3\Trace(M^2)\Trace(M)\Trace(S)
    +
    3\Trace(MS)\Trace(M^2)
    \\
    +
    2\Trace(M^3)\Trace(S)
    +
    6\Trace(M^2S)\Trace(M)
    -6\Trace(M^3S)
    =
    0.
  \end{multline*}
  Using our previous notation \(T^\sigma\), this can be recast as
  \[
    \sum_{\sigma\in\mathfrak{S}_4}
    (-1)^\sigma T^\sigma(M,M,M,S)
    =
    0.
  \]
  By complete polarization, for any matrices \(P,Q,R,S\), one has
  \[
    \sum_{\sigma\in\mathfrak{S}_4}
    (-1)^\sigma T^\sigma(P,Q,R,S)
    =
    0.
  \]
  This can be written explicitly as the (24 terms) relation:
  \begin{multline*}
    \Trace(P)\Trace(Q)\Trace(R)\Trace(S)
    - \Trace(PS)\Trace(Q)\Trace(R) - \Trace(QS)\Trace(P)\Trace(R) - \Trace(RS)\Trace(P)\Trace(Q)
    \\
    - \Trace(PQ)\Trace(R)\Trace(S) - \Trace(PR)\Trace(Q)\Trace(S) - \Trace(RQ)\Trace(P)\Trace(S)
    \\
    + \Trace(PS)\Trace(RQ) + \Trace(QS)\Trace(PR) + \Trace(RS)\Trace(PQ)
    \\
    +
    \Trace(RPQ)\Trace(S) + \Trace(PRQ)\Trace(S) + \Trace(PQS)\Trace(R) + \Trace(RSQ)\Trace(P) 
    \\
    + \Trace(RPS)\Trace(Q) + \Trace(PSQ)\Trace(R) + \Trace(PRS)\Trace(Q) + \Trace(RQS)\Trace(P)
    \\
    - \Trace(PQRS) - \Trace(RPSQ) - \Trace(RPQS) - \Trace(PRSQ) - \Trace(PRQS) - \Trace(RQPS)
    =
    0,
  \end{multline*}
  which is valid for any matrices \(P,Q,R,S\).
  We have ordered the letters in each monomials in an order that will soon be relevant.

  Let us use this relation to prove that there is a family of generators with \(k\geq 2\).
  We assume that \(k\geq 2\), and for any \(2\leq i\leq k\), we take
  \[
    P=\vb{A}^{\ell_1};
    Q=\vb{A}^{*\ell_1'}\dotsm\vb{A}^{*\ell_{i-1}};
    R=\vb{A}^{\ell_i};
    S=\vb{A}^{*\ell_i'}\dotsm\vb{A}^{\ell_k}\vb{A}^{*\ell_k'}.
  \]
  In the above identity, the only words with as many blocks as \(PQRS\) are \(PQRS\) and \(RQPS\). All the other can be contracted (the only non-obvious case is maybe \(\Tr(QS)\), for which one has maybe to use the relations \(\Tr(MN)=\Tr(NM)\) or \(\Trace(M^*)=\Tr(M)\)).
  If \(P=R\), then \(PQRS=RQPS\) and we get that \(\Tr(PQRS)\) can be written with traces of words having less than  \(k\) blocks. 
  By induction, we can use words where \(\ell_i\neq\ell_j\), for all \(i\neq j\). Since \(1\leq \ell_i,\ell_j\leq 2\), we get that we can use with at most two blocks. As a byproduct of the proof, we can also chose that \(\ell_1=1\) and \(\ell_2=2\) when \(k=2\), since exchanging \(P\) and \(R\) is always possible up to shorter words.

  By transposition, the same applies to the exponents of \(\vb{A}^*\): these must be differents and one can exchange them up to traces of shorter words. Hence we also chose \(\ell_1'=1,\ell_2'=2\), if \(k=2\).

  The remaining generators in our family are
  \[
    \det(\vb{A}),
    \Trace(\vb{A}),
    \Trace(\vb{A}^2),
    \Trace(\vb{A}\vb{A}^{*}),
    \Trace(\vb{A}\vb{A}^{*2})= \Trace(\vb{A}^2\vb{A}^{*}),
    \Trace(\vb{A}^2\vb{A}^{*2}),
    \Trace(\vb{A}\vb{A}^{*}\vb{A}^2\vb{A}^{*2}).
  \]
  This finishes the proof.
\end{proof}

\begin{exam}
  For \(P=R=\vb{A}\) and \(Q=S=\vb{A}^*\), we get the non-obvious algebraic relation
  \begin{multline*}
    \Trace(\vb{A}\vb{A}^*\vb{A}\vb{A}^*)
    =
    \frac{1}{2}
    \big(
      \Trace(\vb{A})^2
      -
      2\Trace(\vb{A}^2)\Trace(\vb{A})^2
      -
      4\Trace(\vb{A}\vb{A}^*)\Trace(\vb{A})^2
      \\
      +
      \Trace(\vb{A}^2)^2
      +
      2
      \Trace(\vb{A}\vb{A}^*)^2
      +
      8\Trace(\vb{A}^2\vb{A}^*)\Trace(\vb{A})
      -
      4
      \Trace(\vb{A}^2\vb{A}^{*2})
    \big).
  \end{multline*}
\end{exam}

\begin{rema}
  See~\cite{Boe87} for joint invariants of several matrices, in which Spencer proves the following interesting more general lemma: In \(\R^3\), the trace of any matrix product of degree seven (or more) can be expressed as a polynomial in traces of matrix products of degree less than seven. 
\end{rema}

\appendix
\part*{Appendix.}
\section{A constructive proof of the complete reducibility of \texorpdfstring{\(\GL(m)\)}{GL(m)}-representations.}
\label{sec:reducibility}
Recall that we denote by \(\Gamma\bydef\GL(m)\), the group of invertible real \((m\times m)\)-matrices.
In this appendix, we prove the following fundamental result.
\begin{theo}[Complete reducibility]\label{theo:complete_reducibility}
  Every finite-dimensional rational \(\Gamma\)-representation decomposes as a direct sum of irreducible rational \(\Gamma\)-representations.
\end{theo}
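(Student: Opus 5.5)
The plan is to follow Weyl's unitarian trick. I will construct, on every finite-dimensional rational \(\Gamma\)-representation \(\rho\colon\Gamma\to\GL(E)\), a Euclidean inner product \(\langle\cdot,\cdot\rangle\) that is \emph{compatible with transposition}:
\[
  \rho(\gamma)^*=\rho(\gamma^*)\qquad\text{for all }\gamma\in\Gamma,
\]
where \(\gamma^*\) denotes the transpose of \(\gamma\) and \(\rho(\gamma)^*\) the adjoint of \(\rho(\gamma)\) for \(\langle\cdot,\cdot\rangle\). Granting such an inner product, the theorem follows by induction on \(\dim E\). Let \(W\subseteq E\) be a proper nonzero subrepresentation. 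For \(x\in W^\perp\), \(w\in W\) and \(\gamma\in\Gamma\),
\[
  \langle\rho(\gamma)x,w\rangle=\langle x,\rho(\gamma^*)w\rangle=0,
\]
because \(\gamma^*\in\Gamma\) and \(W\) is stable. Hence \(E=W\oplus W^\perp\) is a decomposition into subrepresentations. Both summands are rational of smaller dimension, so the induction applies to each.

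To build the inner product I will pass to complex numbers. Since \(\rho(\gamma)\) is given by rational functions in the entries of \(\gamma\), with denominators powers of \(\det\) by the discussion in Section~\ref{secinv}, the same formulas define a holomorphic representation \(\rho_{\mathbb C}\) of \(\GL(m,\mathbb C)\) on \(E_{\mathbb C}=E\otimes\mathbb C\). Its restriction to \(\Gamma\) recovers \(\rho\), and the group law is preserved by the identity theorem for polynomials already used in the proof that infinitesimal and global stability coincide. I then average an arbitrary Hermitian inner product on \(E_{\mathbb C}\) over the compact group \(\mathrm U(m)\) with respect to Haar measure; concretely one can integrate over a parametrisation, or average over the orbit using compactness. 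This yields a \(\mathrm U(m)\)-invariant Hermitian product \(h\), so that \(\rho_{\mathbb C}(k)^*=\rho_{\mathbb C}(k)^{-1}=\rho_{\mathbb C}(k^*)\) for unitary \(k\).

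Next I extend the identity \(\rho_{\mathbb C}(g)^*=\rho_{\mathbb C}(g^*)\) to all \(g\in\GL(m,\mathbb C)\), where now \(g^*\) is the conjugate transpose. Differentiating unitary invariance along \(s\mapsto\exp(isX)\), with \(X\) Hermitian, shows that \(d\rho_{\mathbb C}(iX)\) is skew-Hermitian. Complex linearity of \(d\rho_{\mathbb C}\), which comes from holomorphy, then makes \(d\rho_{\mathbb C}(X)\) Hermitian. Exponentiating gives \(\rho_{\mathbb C}(\exp X)\) Hermitian, that is \(\rho_{\mathbb C}(p)^*=\rho_{\mathbb C}(p)=\rho_{\mathbb C}(p^*)\) for positive definite Hermitian \(p=\exp X\). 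Writing an arbitrary \(g\) in polar form \(g=kp\) gives
\[
  \rho_{\mathbb C}(g)^*=\rho_{\mathbb C}(p)^*\rho_{\mathbb C}(k)^*=\rho_{\mathbb C}(p^*)\rho_{\mathbb C}(k^*)=\rho_{\mathbb C}\big((kp)^*\big).
\]
Finally, I restrict \(\operatorname{Re}h\) to the real subspace \(E\subseteq E_{\mathbb C}\). This is a Euclidean inner product on \(E\), and for real \(\gamma\) the conjugate transpose is the ordinary transpose. So \(\operatorname{Re}h\) satisfies the compatibility \(\rho(\gamma)^*=\rho(\gamma^*)\) for all \(\gamma\in\Gamma\), which completes the construction.

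The main obstacle is the Hermitian step. It requires checking carefully that \(d\rho_{\mathbb C}\) is complex linear and that \(\rho_{\mathbb C}\circ\exp=\exp\circ\,d\rho_{\mathbb C}\) on Hermitian matrices. The second fact follows because \(s\mapsto\rho_{\mathbb C}(\exp sX)\) is a one-parameter group, hence the flow of its generator, exactly as in the Cauchy-problem argument of the section on infinitesimal actions. A second delicate point is the existence of the Haar average on \(\mathrm U(m)\). To stay elementary and avoid general Haar measure, I would first try to reduce to the model representations, namely subrepresentations of \(\det^{-k}\otimes(\R^m)^{\otimes d}\). On \(\R^m\) the standard inner product already satisfies the compatibility, and this property passes to tensor products and to the one-dimensional twist by \(\det^{-k}\). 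The cost is proving that every rational representation embeds into such a space, which I expect to follow from realising \(E\) inside \(\O(\Gamma)^{\dim E}\) via matrix coefficients.
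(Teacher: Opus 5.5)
Your proof is correct. It follows exactly the route the paper deliberately avoids: the appendix names Weyl's unitary trick as a standard proof and replaces it with a purely algebraic one. The paper reduces complete reducibility to Reynolds operators on the family $\mathcal F_r$ (Theorem~\ref{theoReyCR}, which applies the Reynolds operator of $\Hom(V,W)$ to an arbitrary projector onto $W$, using $\End(V)\in\mathcal F_r$). It transports these operators from $D^{-r}\R[\operatorname{M}_m]$ via the matrix-coefficient embedding (Theorem~\ref{theo:rey_Fr}). It then writes the projector $\R[\operatorname{M}_m]_{rm}\to\R D^r$ explicitly from the polarized determinant $\tilde D_m/m!$, using a block-diagonal embedding into $\GL(rm)$ for $r\geq 2$. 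That argument needs no analysis or complexification and works verbatim over any field of characteristic zero.

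Your argument relies on Haar measure on $\mathrm U(m)$, holomorphy, the exponential map and polar decomposition, so it is tied to $\R$ and $\mathbb C$. In exchange it yields a stronger structure: an inner product with $\rho(\gamma)^*=\rho(\gamma^*)$, which makes orthogonal complements stable immediately, without passing to $\End(V)$. Your descent through $\operatorname{Re}h$ restricted to $E$, using that $\Gamma$ is closed under transposition, is a clean substitute for the Galois descent the paper alludes to.

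Your Haar-free fallback also goes through and deserves to be written out:
\begin{itemize}
\item the embedding $E\hookrightarrow(D^{-k}\R[\operatorname{M}_m])^{\dim E}$ is the one in the proof of Theorem~\ref{theo:rey_Fr};
\item its image lies in finitely many pieces $D^{-k}\R[\operatorname{M}_m]_d\cong\det^{-k}\otimes\R[\operatorname{M}_m]_d$;
\item under right translation, each row $x_{i1},\dotsc,x_{im}$ of coordinate functions spans a copy of the standard representation, so $\R[\operatorname{M}_m]_d$ embeds by symmetrization into a direct sum of copies of $(\R^m)^{\otimes d}$.
\end{itemize}
Transpose-compatibility holds on $(\R^m)^{\otimes d}$ and on the twist by $\det^{-k}$, and it passes to direct sums and subrepresentations. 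This gives a complete proof by linear algebra alone. Unlike the paper's proof, however, it uses the positivity of an inner product, hence the ordering of $\R$.
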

To our knowledge, the proof we provide here is new. It is somewhat ``minimal'', and just uses elementary material. There are several proofs commonly found in the literature. We (very roughly) mention two of these.

\begin{itemize}
  \item using Weyl's ``unitary trick'' over $\mathbb C$, and (a hint of) Galois theory  to descend from $\mathbb C$ to $\R$.
  \item via Schur--Weyl duality.
\end{itemize}
In our approach, all these ingredients are removed.\\
We first reformulate the theorem. Let \(V\) be a \(\Gamma\)-representation, and let \(W\) be a subrepresentation of \(V\).

From linear algebra, recall that a linear map \(\pi\colon V \to W\) is called a \textsl{projector onto \(W\)} if \(\pi\vert_{W}=\id_W\).
In that case, the subspace \(\ker(\pi)\) is supplementary to \(W\).
Conversely, if \(V=W\oplus W'\), then the associated projection \(V\to W\) is a projector onto \(W\).
To sum up, direct sum decompositions of \(V\) are in one-to-one correspondance with projectors in \(\End(V)\).

Recall that if \(V\) is a \(\Gamma\)-representation, then \(\End(V)\) is naturally a \(\Gamma\)-representation, for the conjugation action. More generally, let \(\rho_V\colon\Gamma\to\GL(V)\) and \(\rho_W\colon\gamma\to\GL(W)\) be two \(\Gamma\)-representations. Then the vector space \(\Hom(V,W)\) of linear maps from \(V\) to \(W\) is naturally a representation, for the action defined by
\[
  \gamma\ast u
  \bydef
  \rho_W(\gamma)\circ u\circ\rho_V(\gamma)^{-1}.
\]
An invariant linear map \(u\in\Hom(V,W)\) for this action is usually called \textsl{\(\Gamma\)-equivariant}.
A linear map \(u\in\Hom(V,W)\) is thus equivariant if it commutes with the \(\Gamma\)-actions:
\[
  \forall\gamma\in\Gamma\qcomma
  \forall \vb*{v}\in V\qcomma
  u(\gamma\ast\vb*{v})
  =
  \gamma\ast u(\vb*{v}).
\]
We already have seen a lot of examples of such maps throughout the text.

If a projector \(\pi\in\End(V)\) onto a subrepresentation \(W\) is equivariant, then its kernel \(W'\bydef\ker(\pi)\) is \(\Gamma\)-stable, since
\[
  \forall \vb*{v}\in\ker\pi\qcomma
  \forall\gamma\in\Gamma\qcomma
  \pi(\gamma\ast\vb*{v})
  =
  \gamma\ast\pi(\vb*{v})
  =
  0.
\]
One has thus \(V=W\stackrel\Gamma\oplus W'\), as \(\Gamma\)-representations.
Conversely, if \(V=W\stackrel\Gamma\oplus W'\), then the natural projection on \(W\) is equivariant.
To sum up, direct sum decompositions of \(V\) into subrepresentations are in one-to-one correspondance with equivariant projectors in \(\End(V)\).

By induction on the dimension, it is clear that if every subrepresentation admits a supplementary subrepresentation, then every finite-dimensional representation decomposes as a direct sum of irreducible subrepresentations. The reverse implication is obvious.
We will hence prove the following equivalent version of Theorem~\ref{theo:complete_reducibility}.
\begin{theo}\label{theo:rep_supp}
  Every subrepresentation \(W\) of a finite-dimensional rational \(\Gamma\)-representation \(V\) admits a supplementary subrepresentation.
\end{theo}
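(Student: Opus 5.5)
The plan is to produce the supplement as an orthogonal complement, using Weyl's unitary trick; the construction is transported to the real group through the infinitesimal action. The criterion is the theorem of Part~I: a subspace is \(\Gamma\)-stable if and only if it is stable under the infinitesimal \(\Gamma\)-action. It therefore suffices to find a Euclidean inner product on \(V\) such that the operators \(L_X\bydef d\rho(X)\), for \(X\in\operatorname{M}_m\), satisfy
\[
  L_{X^{*}} = L_X^{*}.
\]
Granting this, let \(X\in\operatorname{M}_m\) and \(\vb*{v}\in W^\perp\). Since \(W\) is stable under \(L_{X^*}=L_X^*\), for every \(\vb*{w}\in W\) one has \(\langle L_X\vb*{v},\vb*{w}\rangle=\langle \vb*{v},L_X^*\vb*{w}\rangle=0\). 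So \(W^\perp\) is stable under the infinitesimal action, hence it is a supplementary subrepresentation of \(W\).

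To construct the inner product, I would complexify. Because \(\rho\) is rational, its matrix coefficients are polynomials in the entries \(\gamma_{ij}\) and in \(\det(\gamma)^{-1}\). The same formulas therefore define a holomorphic representation \(\rho_{\mathbb C}\) of \(\GL(m,\mathbb C)\) on \(V_{\mathbb C}=V\otimes\mathbb C\). Its differential \(d\rho_{\mathbb C}\) is the \(\mathbb C\)-linear extension of \(d\rho\), and \(\rho_{\mathbb C}\) commutes with the complex conjugation \(\sigma\) of \(V_{\mathbb C}\) in the sense that \(\rho_{\mathbb C}(\bar\gamma)=\sigma\rho_{\mathbb C}(\gamma)\sigma\). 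Next take any Hermitian inner product \(h_0\) on \(V_{\mathbb C}\). Average it over the compact group \(U(m)\) with respect to Haar measure, and average once more with its conjugate \(\overline{h_0(\sigma\cdot,\sigma\cdot)}\). This is legitimate because \(U(m)\) is stable under entrywise conjugation. The result is a \(U(m)\)-invariant Hermitian form \(h\) with \(h(\sigma x,\sigma y)=\overline{h(x,y)}\).

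The \(U(m)\)-invariance of \(h\) gives that \(d\rho_{\mathbb C}(Y)\) is \(h\)-skew-Hermitian for every \(Y\in\mathfrak u(m)\). Any real matrix decomposes as \(X=X_a+X_s\) with \(X_a\) antisymmetric and \(X_s\) symmetric. Here \(X_a\in\mathfrak u(m)\), and \(iX_s\in\mathfrak u(m)\). By \(\mathbb C\)-linearity, \(d\rho_{\mathbb C}(X_s)=-i\,d\rho_{\mathbb C}(iX_s)\) is Hermitian while \(d\rho_{\mathbb C}(X_a)\) is skew-Hermitian. This gives \(d\rho_{\mathbb C}(X^*)=d\rho_{\mathbb C}(X)^{*_h}\). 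The \(\sigma\)-compatibility makes the real part of \(h\), restricted to \(V\), a Euclidean inner product, and the adjoint identity descends to it. One could also skip the descent: \(W_{\mathbb C}^{\perp_h}\) is \(\sigma\)-stable, hence it is the complexification of a real subspace \(W'\subseteq V\), and this \(W'\) is the required supplement.

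I expect the main obstacle to be the analytic input. The argument needs the existence of Haar measure on \(U(m)\), or at least an invariant integral, together with a careful check that rationality really yields a continuous, indeed holomorphic, extension \(\rho_{\mathbb C}\). If one wants the ``minimal'' spirit of the appendix, I would first try to replace the Haar average by an explicit invariant integral on \(U(m)\) built from a parametrisation. If that becomes heavy, I would accept Haar measure as a black box. The rest of the argument is linear algebra plus the Part~I criterion.
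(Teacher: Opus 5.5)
Your argument is correct, but it takes a genuinely different route. It is exactly the classical proof, Weyl's unitary trick followed by a descent from \(\mathbb C\) to \(\R\), that the appendix says it will avoid. The paper never complexifies and never integrates; instead it builds an algebraic Reynolds operator in three steps. First it constructs the equivariant projector \(\R[\operatorname{M}_m]_{rm}\to\R\cdot D^r\) from the polarized determinant (Cayley's \(\Omega\)-process in disguise). Then it uses the equivariant embedding of \(V\in\mathcal F_r\) into copies of \(D^{-r}\R[\operatorname{M}_m]\) by matrix coefficients to obtain a Reynolds operator on all of \(\mathcal F_r\). Finally it applies this operator to an arbitrary projector \(\pi\in\Hom(V,W)\), which yields an equivariant projector.

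The two routes buy different things. The paper's route is purely algebraic and explicit. It needs no Haar measure, no compact form, and not even the infinitesimal criterion of Part~I, and it works verbatim over any field of characteristic zero. Your route is shorter once Haar measure on \(U(m)\) is granted, but it is tied to \(\R\) and \(\mathbb C\). It also gives more structure: a Euclidean inner product on \(V\) with \(L_{X^{*}}=L_X^{*}\). With it, supplements are simply orthogonal complements, and the Reynolds operator is the orthogonal projection onto \(V^\Gamma\). Symmetrizing \(h\) with \(\overline{h(\sigma\cdot,\sigma\cdot)}\) is a clean substitute for the Galois-descent step.

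The check you flagged about \(\rho_{\mathbb C}\) is harmless: it is the same identity principle used in Part~I. Multiply the entries of \(\rho(gh)-\rho(g)\rho(h)\) by suitable powers of \(\det(g)\det(h)\). They become polynomials in the real entries of \(g,h\) that vanish on a non-empty open set, hence they are identically zero. So the same formulas define a holomorphic homomorphism on \(\GL(m,\mathbb C)\).
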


To motivate our next step, a short digression on the classical notion of Reynolds operators may be helpful.
Among the many contexts in which these operators exist, consider the case of a compact real matrix group \(G\), and a  finite-dimensional \(G\)-representation \(V\). Then the so-called \textit{Reynolds operator} is the averaging operator $\rey_V$, along \(G\)-orbits:
\[
  \rey_V\colon V \to V^G
  \qcomma
  \vb*{v} \mapsto \int_{g \in G} (g \ast \vb*{v})d \mu,
\]
where \(\mu\) is the normalised Haar mesure on \(G\) (i.e. with \(\mu(G)=1\)).
In case \(G\) is a finite  group with  \(N\) elements, the formula is simply 
\[
  \rey_V(\vb*{v})
  =
  \frac 1 N \sum_{g \in G} g \ast  \vb*{v}.
\] 
It is classical, that \(\rey_V\) is the unique \(G\)-equivariant projector onto the \(G\)-invariant subspace \(V^G\subset V\).
This analogy seems very limited: indeed, in our context, the group \(\Gamma=\GL(m)\) is not compact. What seems worse, it  has no ``large enough'' compact subgroup that would allow to use Weyl's unitary trick.
As a consequence, the previous averaging formula does not make sense at all.
Perhaps surprisingly,  Reynolds operators still exist---but in a purely algebraic sense. \\
From now on, for a (possibly infinite-dimensional) \(\Gamma\)-representation \(V\), let us call \textit{Reynolds operator} a \(\Gamma\)-equivariant projector  onto the subrepresentation \(V^\Gamma\), 
\[
  \rey_V
  \colon
  V \to V^\Gamma.
\]
By the above considerations, the theorem that we want to prove implies the existence of Reynolds operators as a particular case.
In order to get a reciprocal, we need to strengthen the notion by assuming the compatibility of Reynolds operators with equivariant maps, as follows.\\
Let \(\mathcal{F}\) be a family of \(\Gamma\)-representations such that for any representation \(V\in\mathcal{F}\),
all subrepresentations \(W\) of \(V\) are in \(\mathcal{F}\)
and all quotient representations \(W\) of \(V\) (that is the images of surjective equivariant maps \(V\to W\)) are in \(\mathcal{F}\).
Such a family is said to be \textit{closed under subrepresentations and quotients}.
A \textsl{Reynolds operator} \(\rey\) \textsl{on} \(\mathcal{F}\) is 
the assignement of a Reynolds operator \(\rey_V\colon V\to V^\Gamma\) to each \(\Gamma\)-representation \(V\in\mathcal{F}\), in such way that for any \(\Gamma\)-equivariant linear map \(\mu\colon V\to W\), one has
\[
  \mu\circ\rey_{V}
  =
  \rey_{\mu(W)}\circ\mu.
\]
If \(W\) itself is in \(\mathcal{F}\), this can be simplified as  \(\mu\circ\rey_{V} = \rey_{W}\circ\mu\).
\begin{exam}\label{examFr}
  For $r \geq 0,$ let \(\mathcal{F}_r\) be the family of finite-dimensional rational \(\Gamma\)-representations \(\rho\colon \Gamma\to\GL(V)\), such that \(\det(\gamma)^r\rho(\gamma)\) is polynomial. This family is closed under subrepresentations and quotients. We make  three elementary albeit important observations. Their verification is left to the reader. \begin{enumerate}
    \item The family \(\mathcal{F}_0\) consists of all polynomial representations.
    \item For $r \leq s$, we have   \(\mathcal{F}_r \subset \mathcal{F}_s \).
    \item For every (rational) representation $V$ of $\Gamma$, there exists an $r \geq 1$ such that \( V \in \mathcal{F}_r.\)
  \end{enumerate}  

\end{exam}

It is classical that the existence of a Reynolds operator implies complete reducibility.
\begin{theo}\label{theoReyCR}
  Let $r\geq 1$ be an integer. Assume there is a Reynolds operator on \(\mathcal{F}_r\). Let \(V\) be a rational \(\Gamma\)-representation , such that  \(\End(V)\in\mathcal{F}_r\). Then  every subrepresentation \(W \subset V\) admits a supplementary subrepresentation.
\end{theo}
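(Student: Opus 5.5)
The strategy is to build, from the Reynolds operator on \(\End(V)\), an equivariant projector \(\pi\) onto \(W\). By the discussion preceding Theorem~\ref{theo:rep_supp}, its kernel is then a supplementary subrepresentation. We apply the compatibility of \(\rey\) with equivariant maps to suitable equivariant maps between subrepresentations of \(\End(V)\) and \(\Hom(V,W)\).

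\textbf{Step 1: the relevant subspaces.} Pick any linear projector \(p\colon V\to V\) with image \(W\) and \(p\vert_W=\id_W\); at this point \(p\) is not equivariant. Inside \(\End(V)\) consider
\[
  E_1\bydef\Set{u\in\End(V)\colon u(V)\subseteq W,\ u\vert_W\in\R\,\id_W}
  \qand
  E_0\bydef\Set{u\in\End(V)\colon u(V)\subseteq W,\ u\vert_W=0}.
\]
Both are \(\Gamma\)-stable under conjugation, because \(W\) is \(\Gamma\)-stable and \(\gamma\circ(\lambda\id_W)\circ\gamma^{-1}=\lambda\id_W\) on \(W\). Hence \(E_0\subseteq E_1\) are subrepresentations of \(\End(V)\in\mathcal{F}_r\), so they lie in \(\mathcal{F}_r\), because the family is closed under subrepresentations.

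\textbf{Step 2: the equivariant map.} Define \(\mu\colon E_1\to\R\) by \(u\mapsto\lambda\), where \(u\vert_W=\lambda\id_W\). It is linear, with kernel \(E_0\), and it is \(\Gamma\)-equivariant onto the trivial representation \(\R\), by the computation in Step~1. It is surjective since \(\mu(p)=1\). Thus \(\R=\mu(E_1)\) is a quotient of \(E_1\), so \(\R\in\mathcal{F}_r\), and \(\rey_{\R}=\id\) because \(\R^\Gamma=\R\). Compatibility gives
\[
  \mu(\rey_{E_1}(p))=\rey_{\R}(\mu(p))=1.
\]

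\textbf{Step 3: conclusion.} Set \(\pi\bydef\rey_{E_1}(p)\). Then \(\pi\in E_1^\Gamma\), so \(\pi\) is a \(\Gamma\)-invariant element of \(\End(V)\), that is, an equivariant endomorphism. It satisfies \(\pi(V)\subseteq W\) and \(\pi\vert_W=\mu(\pi)\id_W=\id_W\). So \(\pi\) is an equivariant projector onto \(W\), and \(\ker\pi\) is the desired supplementary subrepresentation.

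\textbf{Main obstacle.} The delicate point is bookkeeping rather than computation. We must check that every representation to which \(\rey\) is applied lies in \(\mathcal{F}_r\): this holds for \(E_1\) as a subrepresentation of \(\End(V)\), and for \(\R\) as a quotient of \(E_1\), so it does not depend on whether \(\R\) belongs to \(\mathcal{F}_r\) a priori. We must also use the equivariance relation in the simplified form \(\mu\circ\rey_{E_1}=\rey_{\R}\circ\mu\). An alternative that avoids \(E_0\) would apply \(\rey_{\End(V)}\) directly, but then there is no control on \(\pi\vert_W\). Restricting to \(E_1\) and using the map \(\mu\) is exactly what forces \(\pi\vert_W=\id_W\).
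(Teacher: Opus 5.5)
Your proof is correct and follows the paper's argument. The paper applies the Reynolds operator of the subrepresentation \(\Hom(V,W)\subseteq\End(V)\) to an arbitrary projector and uses compatibility with the equivariant restriction map \(\Hom(V,W)\to\End(W)\), together with \(\rey_{\End(W)}(\id_W)=\id_W\); you use the smaller subrepresentation \(E_1\) with its trivial quotient \(\R\) via \(\mu\), which is the same mechanism. One small addition: set aside the trivial case \(W=\{0\}\), because there \(E_1=\{0\}\), so \(\lambda\) is not determined and \(\mu(p)=1\) makes no sense.
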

\begin{proof}
  Let \(V\) be a finite-dimensional rational representation of \(\Gamma\), and let \(W\subseteq V\) be a subrepresentation.
  Recall that \(\End(V)\) is a rational \(\Gamma\)-representation, for the conjugation action.
  Since \(W\) is a \(\Gamma\)-stable subspace, we get that \(\Hom(V,W)\subseteq\End(V)\) is a (finite-dimensional rational) subrepresentation. Also, the restriction map \(\Hom(V,W)\to\End(W)\) is surjective and \(\Gamma\)-equivariant. As such, it presents $\End(W)$ as a quotient of $\Hom(V,W)$.
  Assume \(\End(V)\in\mathcal{F}_r\). By the above,  it follows that \(\Hom(V,W)\) and \(\End(W)\) are also in \(\mathcal{F}_r\).
  Let \(\pi\in\Hom(V,W)\) be any projector onto \(W\).
  Then \(\rey_{\Hom(V,W)}(\pi)\) is an equivariant linear map \(V\to W\).
  The restriction map \(\Hom(V,W)\to\End(W)\) is \(\Gamma\)-equivariant,
  and the identity map is invariant by conjugation,
  therefore
  \[
    (\rey_{\Hom(V,W)}(\pi))\vert_W
    =
    \rey_{\End(W)}(\id_W)
    =
    \id_W,
  \]
  by the compatibility of Reynolds operators with \(\Gamma\)-equivariant linear maps.
  We have produced an equivariant projector \(V\to W\).
\end{proof}

\begin{rema}
  In the premises of Theorem \ref{theoReyCR}, the assumption  \(\End(V)\in\mathcal{F}_r\) is essential, and cannot be replaced by the (inappropriate) assumption  \(V \in\mathcal{F}_r\). Observe that none of these conditions imply the other. For instance, the standard representation $V=\R^m$ is polynomial (=belongs to  $\mathcal{F}_0$), but  \(\End(V) = \operatorname{M}_{m}\) (with the conjugation action) does not: it belongs to $ \mathcal{F}_1\setminus\mathcal{F}_0$. On the other hand,  for $r \geq 1$, the homomorphism $\det^{-r}\colon\GL(m) \to \R^*$  gives a  one-dimensional rational representation $V$,  that belongs to $ \mathcal{F}_r\setminus\mathcal{F}_{r-1}$. However, in this case $\End(V)$ is the trivial one-dimensional representation, that is obviously polynomial.
\end{rema}
\begin{rema}\label{unirey}
  There is a converse to the Theorem above: provided \(\Gamma\) is completely reducible, then all Reynolds operators \(\rey_V\) exist and are unique. Proving this  is a  good (but not so easy) exercise, left to  the reader. The same equivalence is true for any matrix group.
\end{rema}

\begin{rema}
  Some  matrix groups  are  not completely reducible; typically the upper unitriangular group \(U_2\simeq(\R,+)\).
\end{rema}

Recall that a rational representation is a group homomorphism \(\rho\colon \Gamma\to\GL(V)\), such that \(\rho(\gamma)\) is rational in \(\gamma\). 
Recall also the notation \(\R[\operatorname{M}_{m}]=\R[x_{ij}, 1 \leq i,j \leq n]\) for the algebra of polynomial functions on \(\Gamma\), and \(\O(\Gamma)\) for the algebra of regular functions.
\begin{exam}
  The function \[\vb M \mapsto \frac {x_{11}^2+ 2x_{m1}^2 x_{mm}-3} {\det(M)^3}  \in \R\] is regular, but it is not polynomial.
\end{exam}

If \((\vb*{e}_1,\dotsc,\vb*{e}_n)\) is a basis of \(V\), one has for all \(\vb*{v}\in V\)
\[
  \gamma\ast\vb*{v}
  =
  \sum_{i=1}^n
  \alpha^i(\vb*{v})(\gamma)\vb*{e}_i,
\]
for some rational functions \(\gamma\mapsto\alpha^i(\vb*{v})(\gamma)\).
Note that \(\vb*{v}\) is invariant if and only if the rational functions \(\alpha^i(\vb*{v})\in\O(\Gamma)\) are all constant.
To a Reynolds operator \(\rey_V\) and a vector \(\vb*{v}\), such that
\[
  \rey_V(\vb*{v})
  =
  \sum_{i=1}^n
  a^i(\vb*{v})\vb*{e}_i,
\]
one can hence associate \(n\) ``averaging'' maps
\[
  \O(\Gamma)\to\R\qcomma
  \alpha^i(\vb*{v}) \mapsto a^i(\vb*{v}),
\]
such that \(a^i\) is an invariant of \(V\).
Our goal is to show that there is a universal reverse procedure producing a Reynolds operator.

%The action of \(\Gamma\) on itself by right translations \(\gamma\ast\gamma'=\gamma'\gamma^{-1}\), induces a linear action on  \(\O(\Gamma)\), by means of the formula,  for \(f \in \O(\Gamma)\):
There is a natural action of \(\Gamma\) on  \(\O(\Gamma)\), by means of the formula,  for \(f \in \O(\Gamma)\):
\[
  \forall\gamma,\gamma'\in\Gamma\qcomma
  (\gamma\ast f )(\gamma')=f(\gamma'\gamma).
\]
In that way,  \(\O(\Gamma)\)  becomes a (rational, infinite-dimensional) representation of \(\Gamma\). 
By transitivity of the action of \(\Gamma\) on itself, it is straightforward that the fixed points in \(\O(\Gamma)\) are the constant maps.
Therefore a Reynolds operator \(\rey_{\O(\Gamma)}\) for \(\O(\Gamma)\) would be the perfect candidate for an ``averaging'' operator.

We have already seen that the denominator of a regular map \(\Gamma\to\R\) is a power of the polynomial 
\( D(\gamma) \bydef \det(\gamma)\):
\[
  \R[\operatorname{M}_{m}]\subseteq\O(\Gamma)=\R[\operatorname{M}_{m}][D^{-1}]\subseteq\R(\operatorname{M}_{m}).
\]
Hence there is an infinite increasing sequence of subrepresentations
\[
  \R[\operatorname{M}_{m}]
  \subseteq
  D^{-1}\R[\operatorname{M}_{m}]
  \subseteq\dotsb\subseteq
  D^{-r}\R[\operatorname{M}_{m}]
  \subseteq\dotsb\subseteq
  \O(\Gamma).
\]
It is exhaustive, meaning that \( \bigcup_{r \geq 0} D^{-r}\R[\operatorname{M}_{m}]=\O(\Gamma)\).
For the sake of simplicity, we will restrict ourselves to construct a Reynolds operator on \(D^{-r}\R[\operatorname{M}_{m}]\).
We will come back to this matter later.

\begin{theo}\label{theo:rey_Fr}
  If there is a Reynolds operator on the \(\Gamma\)-representation \(D^{-r}\R[\operatorname{M}_{m}]\),
  then there is a Reynolds operator on the family \(\mathcal{F}_r\). 
\end{theo}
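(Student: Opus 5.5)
We construct the Reynolds operator on \(\mathcal{F}_r\) by transporting \(\rey_{D^{-r}\R[\operatorname{M}_m]}\) through matrix coefficients. Let \(\rey_0\) be the given Reynolds operator on \(E_r\bydef D^{-r}\R[\operatorname{M}_{m}]\). It is an equivariant projector onto \(E_r^\Gamma=\R\) (the constants), so we may view it as a \(\Gamma\)-invariant linear form \(\rey_0\colon E_r\to\R\) with \(\rey_0(1)=1\).

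For \(V\in\mathcal{F}_r\), \(\vb*{v}\in V\) and \(\xi\in V^*\), consider the matrix coefficient
\[
  c_{\xi,\vb*{v}}(\gamma)\bydef \xi(\gamma\ast\vb*{v}).
\]
Since \(\det(\gamma)^r\rho(\gamma)\) is polynomial, \(c_{\xi,\vb*{v}}\in E_r\). We then define \(\rey_V(\vb*{v})\in V\) as the unique vector such that
\[
  \xi(\rey_V(\vb*{v}))=\rey_0(c_{\xi,\vb*{v}})\qquad\text{for all } \xi\in V^*.
\]
This is well defined because \(\xi\mapsto c_{\xi,\vb*{v}}\) is linear and \(V\) is finite-dimensional. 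In a basis, this is exactly the ``averaging'' of the coefficients \(\alpha^i(\vb*{v})\) described above.

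The axioms then reduce to formal identities between matrix coefficients.
\begin{enumerate}
  \item \emph{Invariance of the image.} We have \(c_{\xi,\vb*{v}}(\gamma'\gamma)=c_{\xi\circ\rho(\gamma'),\vb*{v}}(\gamma)\), i.e. the left translate of \(c_{\xi,\vb*{v}}\) by \(\gamma'\) is \(c_{\gamma'^{-1}\ast\xi,\vb*{v}}\). The action defined on \(\O(\Gamma)\) is by right translation, but inversion \(\gamma\mapsto\gamma^{-1}\) exchanges the two; alternatively, I would check directly that \(\rey_0\) is invariant under left translations as well. If not, I would define \(\rey_V\) through \(\gamma\mapsto\xi(\gamma^{-1}\ast\vb*{v})\) instead. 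This is the point where the convention has to be fixed carefully, and left/right bookkeeping is a plausible source of error.
  \item \emph{Equivariance.} Right translation by \(\gamma'\) gives \(c_{\xi,\gamma'\ast\vb*{v}}=\gamma'\ast c_{\xi,\vb*{v}}\). Invariance of \(\rey_0\) then yields \(\rey_V(\gamma'\ast\vb*{v})=\rey_V(\vb*{v})\), and together with the first item, \(\rey_V(\vb*{v})\) is invariant. So \(\rey_V\) is \(\Gamma\)-equivariant when \(V^\Gamma\) carries the trivial action.
  \item \emph{Projector.} If \(\vb*{v}\in V^\Gamma\), then \(c_{\xi,\vb*{v}}\) is the constant \(\xi(\vb*{v})\), and \(\rey_0\) fixes constants. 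Hence \(\rey_V(\vb*{v})=\vb*{v}\).
  \item \emph{Compatibility with equivariant maps.} Let \(\mu\colon V\to W\) be equivariant. Then \(c_{\eta,\mu(\vb*{v})}=c_{\eta\circ\mu,\vb*{v}}\) for all \(\eta\in W^*\), which gives \(\eta(\mu(\rey_V\vb*{v}))=\eta(\rey_{\mu(V)}\mu(\vb*{v}))\). Pairing with \(\eta\in\mu(V)^*\) (extended to \(W^*\)) suffices, since \(\mu(V)\in\mathcal{F}_r\) as a quotient.
\end{enumerate}

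The main obstacle is the first step: showing that the vector defined via \(\rey_0\) actually lies in \(V^\Gamma\). This needs \(\rey_0\) to be invariant under the translation that moves \(\xi\), which is the opposite side from the one under which \(\rey_0\) is assumed equivariant. The first approach I would try is to exploit the equivariance of \(\rey_0\) applied to the functions \(c_{\xi,\vb*{v}}\) while combining the two sides through the identity \(c_{\xi,\gamma\ast\vb*{v}}(\gamma')=c_{\gamma'\ast\xi,\vb*{v}}\)-type relations. Failing that, I would use the inversion map \(\gamma\mapsto\gamma^{-1}\), noting that it does not preserve \(E_r\) directly. In that case I would instead apply compatibility of \(\rey_0\) with the equivariant evaluation maps \(E_r\supseteq\{c_{\xi,\vb*{v}}\}\to V\), namely \(\vb*{v}\mapsto(\gamma\mapsto\xi(\gamma\ast\vb*{v}))\) viewed as an equivariant map \(V\to E_r\). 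Under this map \(V^\Gamma\) maps to constants, and the invariance follows from uniqueness of invariants in the \(\Gamma\)-stable finite-dimensional subspace spanned by the matrix coefficients.
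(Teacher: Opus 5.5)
Your construction is the paper's own: \(\xi(\rey_V(\vb*{v}))=\rey_0(c_{\xi,\vb*{v}})\) is the coordinate-free form of \(\rey_V(\vb*{v})=\sum_j\rey(\alpha^j(\vb*{v}))\vb*{e}_j\), and your items 2--4 are correct. Item 1, however, is a genuine gap and not a matter of conventions. Since \(\xi(g\ast\rey_V(\vb*{v}))=\rey_0\big(\gamma\mapsto\xi(g\gamma\ast\vb*{v})\big)\), invariance of \(\rey_V(\vb*{v})\) requires \(\rey_0\) to be invariant under \emph{left} translations \(f\mapsto f(g\,\cdot)\). The hypothesis only gives invariance under \((\gamma\ast f)(\gamma')=f(\gamma'\gamma)\).

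None of your fallbacks supplies the missing invariance. Inversion does not preserve \(E_r=D^{-r}\R[\operatorname{M}_m]\), as you note. Switching to \(\gamma\mapsto\xi(\gamma^{-1}\ast\vb*{v})\) fails for two reasons. First, these coefficients need not lie in \(E_r\): for the standard representation and \(r=0\) they are entries of \(\operatorname{adj}(\gamma)/\det\gamma\). Second, a Reynolds operator needs both \(\rey_V\circ\rho(g)=\rey_V\) and \(\rho(g)\circ\rey_V=\rey_V\). One-sided invariance of \(\rey_0\) yields only one of them in either convention, so the switch just moves the problem from item 1 to item 2. Your last suggestion is essentially the paper's one-line justification (the preimage of an invariant under the injective equivariant map \(\vb*{v}\mapsto(\alpha^j(\vb*{v}))_j\) is invariant), and it is circular. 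The constant tuple \((\rey_0(\alpha^j(\vb*{v})))_j\) lies in the image of that map only if \(\rey_V(\vb*{v})\) is already invariant. Uniqueness of Reynolds operators is not available either: it comes from complete reducibility (Remark~\ref{unirey}), which is what the appendix is proving, and \(\rey_0\) is a single operator, not a compatible family.

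What is missing is a two-sidedness argument, which for \(\GL(m)\) is short. Since \(\det(\gamma^{\top})=\det(\gamma)\), the linear form \(\lambda(f)\bydef\rey_0\big(\gamma\mapsto f(\gamma^{\top})\big)\) is defined on \(E_r\) and satisfies \(\lambda(1)=1\). It is left-invariant, because \(f(g\gamma^{\top})=f\big((\gamma g^{\top})^{\top}\big)\) turns left translation by \(g\) into right translation by \(g^{\top}\).

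Now take \(f=D^{-r}P\in E_r\) and set \(F(x,y)\bydef f(yx)=D(y)^{-r}D(x)^{-r}P(yx)\), which lies in \(E_r\otimes E_r\). Apply \(\rey_0\) in \(y\) first: since \(y\mapsto f(yx)\) is \(x\ast f\), right invariance gives the constant \(\rey_0(f)\), and \(\lambda\) in \(x\) then returns \(\rey_0(f)\). Apply \(\lambda\) in \(x\) first instead: left invariance gives the constant \(\lambda(f)\), and \(\rey_0\) then returns \(\lambda(f)\). Both orders compute \((\lambda\otimes\rey_0)(F)\), so \(\rey_0=\lambda\) is also left-invariant. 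With that, your item 1, and hence your whole argument, goes through.

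Alternatively, you can strengthen the hypothesis to a two-sided invariant projector. The operator built from \(\tilde D_m\) in Theorem~\ref{theo:rey_Dr} is one, since \(\det\big(f^i(\vb*{v}_j)\big)\) scales by \(\det\gamma\) whether \(\gamma\) acts on the vectors or on the covectors.
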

\begin{proof}
  Let \(V\in\mathcal{F}_r\) be a finite-dimensional rational representation of \(\Gamma\).
  Let \((\vb*{e}_1,\dotsc,\vb*{e}_n)\) be a basis of \(V\).
  For \(\vb*{v}\in V\) and for any \(j=1,\dotsc,n\), there is a rational map
  \[
    \alpha^j(\vb*{v})\colon\gamma \mapsto \vb*{e}^j(\gamma\ast\vb*{v}).
  \]
  If \(\vb*{v}\) is invariant, this map is constant. More generally, one has
  \[
    \gamma\ast\alpha^j(\vb*{v})(\gamma')
    =
    \alpha^j(\vb*{v})(\gamma^{-1}\ast\gamma')
    =
    \alpha^j(\vb*{v})(\gamma'\gamma)
    =
    \alpha^j(\gamma\ast\vb*v)(\gamma').
  \]
  In other words the linear map
  \[
    \alpha^j
    \colon
    V\to\O(\Gamma)
    \qcomma
    \vb*{v}\mapsto \alpha^j(\vb*{v})
  \]
  is \(\Gamma\)-equivariant.
  Since
  \[
    \vb*{v}=\sum_j\alpha^j(\vb*{v})(\id)\vb*{e}_j,
  \]
  we get an injective \(\Gamma\)-equivariant linear map %depending on the basis \(\vb*{e}\)
  \[
    \alpha_{\vb*{e}}\colon V\mapsto (D^{-k}\R[\operatorname{M}_{m}])^n\qcomma
    \vb*{v}\mapsto(\alpha^1(\vb*{v}),\dotsc,\alpha^n(\vb*{v})).
  \]

  Applying the Reynolds operator \(\rey\colon D^{-k}\R[\operatorname{M}_{m}]\to\R\) on the right hand side, we obtain constants 
  \[
    a^j(\vb*{v})
    \bydef
    \rey(\alpha^j(\vb*{v}))\in\R.
  \]
  We define an endomorphism \(\rey_V\in\End(V)\) by setting
  \[
    \rey_V(\vb*{v})
    =
    \sum_{j=1}^n a^j(\vb*{v})\vb*{e}_j.
  \]
  For every \(\vb*{v}\in V\), the vector \(\rey_V(\vb*{v})\) is invariant, because the preimage of an invariant by an equivariant injective map is invariant.
  We have already noticed that if \(\vb*{v}\) is invariant, then \(\alpha^j(\vb*{v})=a^j(\vb*{v})\) is constant, which shows that \(\rey_V\) is a projector on \(V^\Gamma\).
  Furthermore, by equivariance of the Reynolds operator \(\rey\),
  \[
    a^j(\gamma\ast\vb*{v})
    =
    \rey(\alpha^j(\gamma\ast\vb*{v}))
    =
    \rey(\gamma\ast\alpha^j(\vb*{v}))
    =
    \rey(\alpha^j(\vb*{v}))
    =
    a^j(\vb*{v}).
  \]
  Therefore \(\rey_V\) is equivariant. To sum up, \(\rey_V\) is a Reynolds operator on \(V\). If \((\bar{\vb*{e}}_1,\dotsc,\bar{\vb*{e}}_n)\) is another basis of \(V\), then
  \[
    \bar{\vb*{e}}_j
    =
    \sum_i
    p_j^i
    \vb{e}_i
    \qand
    \bar{\vb*{e}}^j
    =
    \sum_k
    q_k^j
    \vb{e}^k,
  \]
  for two matrices \(P,Q\) such that \(P=Q^{-1}\).
  then
  \[
    \bar\rey_V(\vb*{v})
    =
    \sum_{j}
    \bar{a}^j(\vb*{v})
    \bar{\vb*{e}}_j
    =
    \sum_{i,j,k}
    q_k^j
    p_j^i
    a^k(\vb*{v})
    \vb*{e}_i
    =
    \sum_{i}
    a^i(\vb*{v})
    \vb*{e}_i
    =
    \rey_V(\vb*{v}).
  \]
  Therefore the projector \(\rey_V\) does not depend on the choice of a basis.

  Now let \(\mu\colon V\to W\) be a \(\Gamma\)-equivariant linear map between two finite-dimensional rational representations.
  We work with two basis \(\vb*{e},\vb*{f}\) of \(V\) and \(W\), in which \(\mu\) has components \(m_i^j\).
  The equivariance of \(\mu\) is recast in components as
  \[
    \forall i,k\qcomma
    \sum_j m_j^k\alpha^j(\vb*{e}_i)
    =
    \sum_j \alpha^k(m_i^j\vb*{f}_j).
  \]
  One infers the same relationship for the coefficients \(a^j\), and the relation \(\mu\circ\rey_V=\rey_W\circ\mu\) follows.
\end{proof}

\begin{rema}
  In the previous proof, we picked a basis \((\vb*{e}_1,\dotsc,\vb*{e}_n)\), for the sake of providing concrete formulas. It is also possible to provide a more abstract, ``coordinate-free'' proof, making it transparent that \(\rey_V\) is independent of the choice of a basis.

\end{rema}

The polynomial degree on \(\R[\operatorname{M}_{m}]\) induces a polynomial degree on \(\R(\operatorname{M}_{m})\) and on \(\O(\Gamma)\).
For a polynomial \(P\) of degree \(d\), one has
\[
  \deg(D^{-r}P)
  =
  d-rm.
\]
The algebra \(\O(\Gamma)\) is graded by its homogeneous components. Here the degree is allowed to be negative:
\[
  \O(\Gamma) 
  =
  \bigoplus_{d\in \mathbb Z} \O(\Gamma)_d.
\] 
There is the following useful characterisation of homogeneous elements: \(f \in \O(\Gamma)\) is homogeneous of degree \(d\), if and only if \(f(\lambda \gamma)=\lambda^d f(\gamma)\), for all \(\lambda \in \R\) and all \(\gamma \in \Gamma\).

\begin{exam}
  The determinant  is a polynomial map, homogeneous of degree \(m\). As such, denote it by \[D=D(x_{ij}) \in \R[x_{ij}]_m.\]
\end{exam}
\begin{exam}
  When \(m=2\),  the homogeneous fraction \[ \frac {x_{13}^2x_{21}-x_{22}^3 } { x_{11} x_{22}-  x_{12} x_{21}}\]  belongs to \( \O(\Gamma)_1\), but it is not a polynomial.
  This illustrates that for \(m\geq2\), one has \[\R[\operatorname{M}_{m}]_d \subsetneq \O(\Gamma)_d,\] for all \(d\).
\end{exam}

By definition of \(\O(\Gamma)\) and of the degree, one gets another exhaustive sequence of homogeneous \(\Gamma\)-stable subspaces

\[
  \R[\operatorname{M}_{m}]_d
  \subseteq
  D^{-1}\R[\operatorname{M}_{m}]_{d+m}
  \subseteq\dotsb\subseteq
  D^{-r}\R[\operatorname{M}_{m}]_{d+rm}
  \subseteq\dotsb\subseteq
  \O(\Gamma)_d.
\]
Its $r$-th term is just the subspace of homogeneous elements of degree $d$, in $ D^{-r}\R[\operatorname{M}_{m}]$.
That these subspaces are indeed  \(\Gamma\)-stable, follows from  the characterisation of homogeneous elements given above.

The next observation comes in handy. Since invariant rational functions are constant, and since the degree of a constant is $d=0$, there is no non-zero fixed point in the representation \(\O(\Gamma)_d\) (and of its subrepresentations) when \(d\neq0\).

We can now  provide a short and constructive proof of the following result.
\begin{theo} \label{theo:rey_Dr}
  There exist Reynolds operators \(\rey_{(r)}\) on the \(\Gamma\)-representation \(D^{-r}\R[\operatorname{M}_{m}]\), for \(r\geq0\).
\end{theo}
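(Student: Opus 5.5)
The plan is to use Cayley's \(\Omega\)-process. Let
\[
  \Omega\bydef\det\Big(\pdv{x_{ij}}\Big)
\]
be the determinant of the matrix of partial derivatives. It is a constant-coefficient differential operator of order \(m\) on \(\R[\operatorname{M}_{m}]\) and lowers degrees by \(m\).

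Since the action of \(\Gamma\) preserves the grading, I will define \(\rey_{(r)}\) degree by degree on \(D^{-r}\R[\operatorname{M}_{m}]=\bigoplus_d D^{-r}\R[\operatorname{M}_{m}]_{d+rm}\).
\begin{itemize}
  \item On the homogeneous components of degree \(d\neq0\), set \(\rey_{(r)}=0\). This is forced, since by the observation above these components contain no non-zero fixed point. It is trivially an equivariant projector onto their (zero) invariant subspace.
  \item On the degree-\(0\) component \(D^{-r}\R[\operatorname{M}_{m}]_{rm}\), every element is \(D^{-r}P\) with \(P\) homogeneous of degree \(rm\), and \(P\) is uniquely determined because \(r\) is fixed. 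Set
  \[
    \rey_{(r)}(D^{-r}P)\bydef \frac{\Omega^r(P)}{\Omega^r(D^r)}.
  \]
  Here \(\Omega^r(P)\) has degree \(0\), so it is a real constant.
\end{itemize}

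\textbf{Step 1: the formula makes sense.} I must show \(\Omega^r(D^r)\neq0\). Rather than computing the Cayley constants \(\prod_{k\le r}k(k+1)\dotsm(k+m-1)\), I would use the apolar pairing \(\langle P,Q\rangle\bydef Q(\partial)P\) on homogeneous polynomials of equal degree. On monomials it is diagonal with positive entries \(\prod\alpha_{ij}!\), so it is positive definite. Since \(\Omega^r=D^r(\partial)\), one gets \(\Omega^r(D^r)=\langle D^r,D^r\rangle>0\).

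\textbf{Step 2: projector.} The invariants of \(\O(\Gamma)\) are the constants. In \(D^{-r}\R[\operatorname{M}_{m}]\), a constant \(a\) is written \(D^{-r}(aD^r)\), and the formula returns exactly \(a\). So \(\rey_{(r)}\) restricts to the identity on the invariants, and its image consists of constants.

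\textbf{Step 3: equivariance (the key point).} I need the transformation rule of \(\Omega\) under right translation. Writing \((\gamma\ast f)(x)=f(x\gamma)\), the chain rule gives that \(\partial/\partial x\) transforms by right multiplication by \(\gamma^{T}\) on the matrix of derivatives. Taking determinants,
\[
  \Omega\big(f(x\gamma)\big)=\det(\gamma)\,(\Omega f)(x\gamma).
\]
Iterating, \(\Omega^r(\gamma\ast P)=\det(\gamma)^r\,\gamma\ast(\Omega^rP)=\det(\gamma)^r\,\Omega^r P\), the last equality because \(\Omega^rP\) is constant.

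On the other hand, \(\gamma\ast(D^{-r}P)=D^{-r}\cdot\det(\gamma)^{-r}(\gamma\ast P)\), using \(D(x\gamma)=D(x)\det(\gamma)\). The new numerator is \(\det(\gamma)^{-r}\gamma\ast P\). Hence
\[
  \rey_{(r)}\big(\gamma\ast (D^{-r}P)\big)=\det(\gamma)^{-r}\det(\gamma)^r\,\frac{\Omega^rP}{\Omega^r(D^r)}=\rey_{(r)}(D^{-r}P),
\]
and the right-hand side is a constant, hence fixed by \(\Gamma\). So \(\rey_{(r)}\) is equivariant.

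I expect the main obstacle to be this bookkeeping: checking the transformation rule of \(\Omega\) carefully, including transposes and the precise convention for the action, so that the determinant factors from \(\Omega^r\) and from \(D^{-r}\) exactly cancel. A secondary point, not needed for the statement as posed but useful later for \(\O(\Gamma)\), is compatibility between \(\rey_{(r)}\) and \(\rey_{(r+1)}\). Uniqueness of Reynolds operators (Remark~\ref{unirey}) could be invoked there once complete reducibility is known. Alternatively, one can check it directly via \(\Omega(DQ)\) on degree-\((r+1)m\) numerators.
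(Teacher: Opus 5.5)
Your proof is correct, but it is organized differently from the paper's.

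\textbf{The paper's route.} For \(r=1\), the paper produces the degree-\(0\) functional from the complete contraction \(D_m\colon V^{\otimes m}\otimes V^{*\otimes m}\to\R\) of Theorem~\ref{theo:triinv}. It reads this as a symmetric \(m\)-linear form \(\tilde D_m\) on \(\operatorname{M}_m\). Equivariance is then just multiplicativity of the determinant, and the normalizing value \(m!\) on \(D\) is computed by hand. For \(r\geq 2\) it does not iterate. Instead it embeds \(\Gamma\) diagonally in \(\GL(mr)\), applies the \(r=1\) construction to \(W=V^r\), and restricts along the block-diagonal embedding \(\operatorname{M}_m\to\operatorname{M}_{mr}\).

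\textbf{Relation to your \(\Omega\)-process.} Underneath, the paper's construction is your \(\Omega\)-process. Since \(\tilde D_m=m!\,\pol_m(D)\), its functional sends \(x_{i_1j_1}\dotsm x_{i_mj_m}\) to \(\partial_{i_1j_1}\dotsm\partial_{i_mj_m}D=\Omega(x_{i_1j_1}\dotsm x_{i_mj_m})\), where \(\partial_{ij}\bydef\partial/\partial x_{ij}\). Unwinding the \(\GL(mr)\) step likewise yields the functional \(\Omega^r/(mr)!\) on \(\R[\operatorname{M}_m]_{rm}\).

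\textbf{What your route buys.} You get a uniform treatment of all \(r\), together with an actual proof of nonvanishing. In the paper, the inclusion \(\R[\operatorname{M}_m]_{rm}\hookrightarrow\R[\operatorname{M}_{rm}]_{rm}\) induced by the block-diagonal embedding does not carry \(D^r\) to a multiple of \(D_W\). The block formula \(\det(g,\dotsc,g)=\det(g)^r\) is about restricting functions, which goes the other way. The restricted functional takes the value \(\Omega^r(D^r)/(mr)!\) on \(D^r\), for instance \(12/24\) when \(m=r=2\). So one has to renormalize, and one must know that \(\Omega^r(D^r)\neq0\). That is exactly your Step~1, via the positive-definite apolar pairing.

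\textbf{What the paper's route buys.} It stays within the tensor language of the main text and ties the Reynolds operator to the invariant \(D_m\). Yours relies instead on the chain-rule transformation law of \(\Omega\), which you derive correctly. Taking the determinant of \((\partial_{ij})\gamma^{T}\) is legitimate because the entries \(\partial_{ij}\) commute and \(\gamma\) is constant.
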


\begin{proof}
  The homogeneous equivariant decomposition yields a direct sum decomposition
  \[
    D^{-r}\R[\operatorname{M}_{m}] =
    \bigoplus_{d\in\mathbb{Z}}
    D^{-r}\R[\operatorname{M}_{m}]_d
    =
    \bigoplus_{d\in\mathbb{Z}}
    D^{-r}\R[\operatorname{M}_{m}]_{d+rm}.
  \]
  Observe that \[  D^{-r}\R[\operatorname{M}_{m}]_{d+rm}= \left( D^{-r}\R[\operatorname{M}_{m}]  \right) \cap \O(\Gamma)_d. \]
  Since the invariants of $\O(\Gamma)$ are the constant functions, homogeneous of degree $d=0$, it follows that  $\O(\Gamma)_d$  (and hence $ D^{-r}\R[\operatorname{M}_{m}]_{d+rm}$) has non non-zero $\Gamma$-invariant element if $d \neq 0$. 
  Hence, we first reduce the problem to the finite-dimensional representation \(D^{-r}\R[\operatorname{M}_{m}]_{rm}\).

  For any \(f\in D^{-r}\R[\operatorname{M}_{m}]\), there is a homogeneous equivariant decomposition \(f=\sum_{d\in\mathbb{Z}} f_d\), such that \(f_0\in D^{-r}\R[\operatorname{M}_{m}]_{rm}\).
  If there is a Reynolds operator \(\rey_{(r)}\) on \(D^{-r}\R[\operatorname{M}_{m}]_{rm}\), we can thus extend it to \(D^{-r}\R[\operatorname{M}_{m}]\) by setting 
  \[
    \rey_{(r)}(f)
    :=
    \rey_{(r)}(f_0).
  \]
  If \(f\) is constant, then \(f=f_0=\rey_{(r)}(f_0)=\rey_{(r)}(f)\).
  We have shown that \(\rey_{(r)}\) is a projector onto \(\R\).
  Lastly, since \(\rey_{(r)}\) can be expressed as the composition of two equivariant linear maps, it is equivariant.

  It remains to exhibit a Reynolds operator on the representation \(D^{-r}\R[\operatorname{M}_{m}]_{rm}\). This is the same as an equivariant projector 
  \[
    \rey_{(r)}\colon\R[\operatorname{M}_{m}]_{rm}\to\R\cdot D^r.
  \]

  To begin with, let us treat the case \(r=1\).

  Write $V=\R^m$.  Recall that  \(\operatorname{M}_{m}\) is isomorphic  to \(V\otimes V^\ast\): there is a natural isomorphism \[\Psi: V\otimes V^\ast \stackrel  \sim \to \End(V)=\operatorname{M}_{m},\] sending a pure tensor  \( \vb*{v} \otimes \phi \) to the endomorphism (of rank $\leq 1$) 
  \[ 
    (x \mapsto \phi(x)\vb*{v}).
  \] 
  In this proof, we consider  $V \otimes V^*$ as a representation of $\Gamma$, by means of the natural action on 
  $V$, and the trivial action on $V^*$: 
  \[ 
    \sigma \ast (\vb*{v} \otimes \phi)=(\sigma \ast \vb*{v}) \otimes \phi.
  \]
  We make $\Gamma$ act on $\operatorname{M}_{m}$ by left multiplication.
  With respect to these actions, the isomorphism $\Psi$ is $\Gamma$-equivariant, as shown by the computation 
  \[ 
    \Psi(\gamma \ast (\vb*{v} \otimes \phi))(x)
    =
    \Psi((\gamma  \ast \vb*{v}) \otimes \phi)(x)
    =
    \phi(x) \gamma \ast \vb*{v}
    =
    (\gamma \ast  \Psi(\vb*{v} \otimes \phi))(x).
  \]
  Observe that the $\Gamma$-action here, on $\End(V)$, is \textit{not} the conjugation action introduced at the beginning of this Appendix.

  From Theorem \ref{theo:triinv}, recall the  linear map (on tensors of order \(2m\)) 
  \[
    D_m\colon   V^{\otimes m} \otimes V^{*\otimes m} \to \R
  \]
  defined by the formula
  \[
    D_m(\vb*{v}_1 \otimes \dotsc  \otimes\vb*{v}_m\otimes f^1 \otimes \dotsc  \otimes f^m)
    =
    \begin{vmatrix}
      f^1(\vb*{v}_1)&\dotso&f^1(\vb*{v}_m)\\
      \\
      f^m(\vb*{v}_1)&\dotso&f^k(\vb*{v}_m)
    \end{vmatrix}.
  \]
  From the multiplicativity of the determinant, we get, for $\gamma \in \Gamma$: 
  \[  
    D_m((\gamma  \ast \vb*{v}_1) \otimes \dotsc  \otimes (\gamma \ast \vb*{v}_m) \otimes f^1 \otimes \dotsc  \otimes f^m )
    =
    \det(\gamma)  D_m(\vb*{v}_1 \otimes \dotsc  \otimes\vb*{v}_m \otimes f^1 \otimes \dotsc  \otimes f^m  )
  .\]
  This formula expresses that  $D_m$ is $\Gamma$-equivariant, with respect to the actions given\begin{itemize}
    \item at the source, by  \( \gamma \ast ( \vb*{v}_1 \otimes \dotsc  \otimes\vb*{v}_m\otimes f^1 \otimes \dotsc  \otimes f^m  )=(\gamma  \ast \vb*{v}_1) \otimes \dotsc  \otimes (\gamma \ast \vb*{v}_m)\otimes f^1 \otimes \dotsc  \otimes f^m  \)
    \item at the target $\R$, by $\rho=\det$.
  \end{itemize} 
  At the source of $D_m$, let us match the $i$-th occurence of $V^*$ with the $i$-th occurence of $V$. This amounts to pairing $f_i$ with $ \vb*{v}_i$. Via the isomorphism $\Psi$ above (used $m$ times), $D_m$ then reads as a $\Gamma$-invariant linear map 
  \[
    \tilde D_m: \operatorname{M}_{m}^{\otimes m}  \to \R
  \]
  Seen as such, it is a \textit{symmetric} multilinear $m$-form on $\End(V)=\operatorname{M}_{m}$. Indeed, one readily checks that \textit{simultaneously} permuting $\vb*{v}_i,\vb*{v}_j$ and $f_i,f_j$, does not change the value of $ D_m(\vb*{v}_1 \otimes \dotsc  \otimes\vb*{v}_m \otimes f^1 \otimes \dotsc  \otimes f^m )$. (The determinant is invariant under conjugation  by a permutation matrix.)
  Therefore, $ \tilde D_m$ gives rise to a $\Gamma$-equivariant linear map
  \[ 
    \tilde \rey_{(1)}
    \colon
    \R[\operatorname{M}_m]_{m}
    \simeq
    \Sym^m(\operatorname{M}_{m})
    \to 
    \R \cdot D.
  \]
  Fix a basis $\vb*{e}_1,\dotsc,\vb*{e}_m$ of $V$, and denote by $\vb*{e}^1,\dotsc,\vb*{e}_m$ its dual basis.  Let us  compute: 
  \begin{align*}
    \tilde \rey_{(1)}(D)
  & =
  \tilde D_m \left(\sum_{\sigma \in S_m} \epsilon(\sigma) (\vb*{e}_1 \otimes \vb*{e}^{\sigma(1)}) \otimes \dotsb \otimes (\vb*{e}_m \otimes \vb*{e}^{\sigma(m)}) \right)
\\&=
\sum_{\sigma \in S_m} \epsilon(\sigma) D_m\left(\vb*{e}_{1}  \otimes \dotsb \otimes \vb*{e}_{m} \otimes \vb*{e}^{\sigma(1)} \otimes \dotsb \otimes \vb*{e}^{\sigma(m)}\right)
\\&=
\sum_{\sigma \in S_m} \epsilon(\sigma)\det(\Mat(\delta_j^{\sigma(i)})) 
=\sum_{\sigma \in S_m} \epsilon(\sigma)^2
=
\sum_{\sigma \in S_m} 1=m!.
  \end{align*}
  As a result, 
  \[
    \rey_{(1)}\bydef\frac 1 {m!} \tilde \rey_{(1)}
  \]
  is the sought-for equivariant projector, for the subspace $\R.  D \subset \R[\operatorname{M}_m]_m. $

  The general case, for $r\geq 2$, is similar. Consider the space $W\bydef V^r$, as a representation of $\Gamma$ through the natural diagonal action: 
  \[
    g \ast (\vb*v_1, \dotsc,\vb*v_r)=(g \ast \vb*v_1, \dotsc, g \ast \vb*v_r).
  \]
  This is exactly the action described in section \ref{secinv} (the space $V^m$  thereof, being here   $V^r$).  Accordingly,  the  homomorphism giving this action,\[\rho: \Gamma=\GL(m) \to \Gamma_r:=\GL(mr),\]  is the diagonal $r$-fold matrix embedding 
  \[
    g \mapsto (g,g,\dotsc, g)
  .\]
  Applying the preceding construction to $W$ in place of $V$, yields  a natural embedding  
  \[
    \R D_W \subset  \R[\operatorname{M}_{rm}]_{rm} ,
  \] 
  where $D_W$ is the determinant of square matrices of size $mr$, and a projector
  \[
    \rey_W \left(= \frac { \tilde \rey_W} {(mr)!} \right)
    \colon
\R[\operatorname{M}_{rm}]_{rm}
    \to \R D_W
  .\]
  These  are $\Gamma_r$-equivariant. A fortiori, they are $\Gamma$-equivariant, where  $\Gamma$ acts via $\rho$.
  Consider the (linear) diagonal $r$-fold  matrix embedding 
  \[
    \operatorname{M}_{m}\to\operatorname{M}_{mr}.
  \]
  On homogeneous polynomials of degree $mr$, it induces a $\Gamma$-equivariant inclusion 
  \[
\R[\operatorname{M}_{m}]_{rm}
    =
    \Sym^{rm}(\operatorname{M}_{m} )  
    \hookrightarrow 
\R[\operatorname{M}_{rm}]_{rm}
    =
    \Sym^ {rm}(\operatorname{M}_{rm}).
  \] 
  Then, the composite inclusion 
  \[
    \R.  D^r 
    \subset
\R[\operatorname{M}_{m}]_{rm}
    \subset
\R[\operatorname{M}_{rm}]_{rm}
    \]
    is none other than  
    \[
      \R.  D_W 
      \subset
\R[\operatorname{M}_{rm}]_{rm}.
      \]
      This fact is a direct consequence of the block formula 
      \[
      \det(g,g,\dotsc, g)=\det(g)^r.
    \]

  It remains to notice that the restriction of $\rey_W$ to
\(\R[\operatorname{M}_{m}]_{rm}\)
  is the sought-for $\Gamma$-equivariant projector  
  \[
    \rey_{(r)}
    \colon
\R[\operatorname{M}_{m}]_{rm}
    \to
    \R.D^r.
    \qedhere
  \] 
\end{proof}
\begin{rema}
  We haven't checked that the ``partial'' Reynolds operators \(\rey_{(r)}\)  constructed above,  are compatible with each other. By this, we mean that, for $r \leq s$, the restriction of \(\rey_{(s)}\)  to  \(D^{-r} \R[\operatorname{M}_{m}]\) (via the inclusion $D^{-r} \R[\operatorname{M}_{m}] \subset D^{-s}\R[\operatorname{M}_{m}]$) should be equal to  \(\rey_{(r)}\). This fact holds a posteriori, because complete irreducibility implies unicity of Reynolds operators (Remark \ref{unirey}). It  can also be checked directly.

\end{rema}

To conclude, we explain how to combine the previous results, to prove Theorem \ref{theo:rep_supp}. Let $V$ be a rational representation of $\Gamma$. Let $W \subset V$ be a subrepresentation.  By Example \ref{examFr} (applied to $\End(V)$), there exists $r\geq 1$, such that $\End(V) \in \mathcal F_r$. By Theorems \ref{theo:rey_Dr} and \ref{theo:rey_Fr}, we get the existence of Reynolds operators on  $\mathcal F_r$. It remains to cast Theorem \ref{theoReyCR}, to get a  supplementary subrepresentation of $W \subset V$.

\begin{rema}
  As the reader may check, the purely algebraic approach taken in Appendix A, just appeals to \textit{one} specific property of the field $\R$. Namely:  every non-zero integer is invertible in $\R$. As a consequence, all results thereof remain true (and are proved in the exact same way) for representations of $\GL(m)$ with coefficients in other \textit{fields of characteristic zero}, such as $\mathbb Q$ or  $\mathbb C$.

\end{rema}

\section{An orbit-limit lemma.}
This last appendix is devoted to prove the following important result of representation theory.
\begin{theo}
  Any finite-dimensional rational \(\GL(m)\)-representation is generated by its \(U_m\)-invariants.
\end{theo}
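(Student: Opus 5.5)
We first use the complete reducibility proved in Appendix~\ref{sec:reducibility}, Theorem~\ref{theo:complete_reducibility}, to reduce to the irreducible case. Write a finite-dimensional rational representation as \(W=W_1\oplus\dotsb\oplus W_k\) with each \(W_i\) irreducible. Then \(W^{U_m}=\bigoplus_i W_i^{U_m}\). So if each \(W_i=\langle\Gamma\ast W_i^{U_m}\rangle\), summing gives the claim for \(W\).

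For an irreducible \(W\), the subspace \(\langle\Gamma\ast W^{U_m}\rangle\) is a subrepresentation. It therefore equals \(W\) as soon as \(W^{U_m}\neq0\). Everything thus reduces to showing that \emph{every nonzero finite-dimensional rational \(\Gamma\)-representation has a nonzero \(U_m\)-fixed vector}. This is the orbit-limit step, and I expect it to be the main obstacle.

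To produce the fixed vector, we use the infinitesimal description of Section~4. Consider the operators \(L_{\alpha\beta}\bydef L_{U_{\alpha\beta}}\) for \(\alpha<\beta\), the infinitesimal generators of the subgroups \(U_{\alpha\beta}\), whose products generate \(U_m\).
\begin{enumerate}
  \item \emph{Nilpotence.} The map \(s\mapsto\rho(I+sE_{\alpha\beta})\) is a rational one-parameter group defined for all \(s\in\R\), with denominator a power of \(\det(I+sE_{\alpha\beta})=1\). Hence it is polynomial in \(s\), so \(\rho(I+sE_{\alpha\beta})=\exp(sL_{\alpha\beta})\) is polynomial in \(s\), which forces \(L_{\alpha\beta}\) to be nilpotent.
  \item \emph{Weight decomposition.} The diagonal subgroups \(T_\alpha\) act by commuting one-parameter groups \(\exp(sL_{\alpha\alpha})\). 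By the same rationality argument, \(t\mapsto\rho(\diag(\dotsc,t,\dotsc))\) is a Laurent polynomial in \(t\). Decomposing into monomials gives a simultaneous eigenspace decomposition \(W=\bigoplus_{\lambda\in\mathbb Z^m}W_\lambda\) into weight spaces.
  \item \emph{Commutation relations.} The relation \(\diag(t)E_{\alpha\beta}\diag(t)^{-1}=(t_\alpha/t_\beta)E_{\alpha\beta}\) shows that \(L_{\alpha\beta}\) maps \(W_\lambda\) into \(W_{\lambda+\epsilon_\alpha-\epsilon_\beta}\).
\end{enumerate}

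Now order the finitely many weights occurring in \(W\) lexicographically, and pick \(\lambda\) maximal with \(W_\lambda\neq0\). For \(\alpha<\beta\), the weight \(\lambda+\epsilon_\alpha-\epsilon_\beta\) is lexicographically larger than \(\lambda\), so its weight space is zero. Hence every \(L_{\alpha\beta}\) with \(\alpha<\beta\) kills \(W_\lambda\). By the flow argument of Section~4, any \(w\in W_\lambda\setminus\{0\}\) is then fixed by every \(U_{\alpha\beta}\), and therefore by the group \(U_m\) they generate.

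This is the ``orbit-limit'' in disguise. Equivalently, for generic \(w\) one can take \(\lim_{t\to\infty}\) of \(t^{-N}\diag(t^{m-1},\dotsc,t,1)\ast w\) for the appropriate exponent \(N\). This conjugation contracts \(U_m\) towards the identity, so the limit is a nonzero \(U_m\)-invariant vector. I would present whichever version is shorter. The delicate points are the rationality arguments in steps 1 and 2, namely that a rational one-parameter group of \(\R\) or \(\R^*\) is polynomial or Laurent polynomial. These follow from the earlier observation that denominators are powers of \(\det\).
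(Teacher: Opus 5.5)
Your proof is correct, and your reduction to irreducibles via Theorem~\ref{theo:complete_reducibility} is the same as the paper's. The difference lies entirely in how you prove Lemma~\ref{lemm:WU}.

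\textbf{The paper's route.} The paper never uses the diagonal torus. It filters \(U_m\) by the subgroups \(S_{i+1}=Q_{i+1}S_i\), where the row subgroups \(Q_i\) are vector groups acting polynomially. For a polynomial action \(\rho\) of a vector group, it takes the limit of lines \([\rho(t)v]\) as \(\abs{t}\to\infty\), computed as \([q(0,v)]\) with \(q(s,v)=s^\delta\rho(1/s)v\). This limit is automatically an invariant line. Iterating through the subspaces \(W^{S_i}\), each of which is \(Q_{i+1}\)-stable, yields a \(U_m\)-fixed line. That argument only uses that \(U_m\) acts polynomially, so it is really a Kolchin-type fixed-point statement for the unipotent group alone, independent of the ambient \(\GL(m)\).

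\textbf{Your route.} You instead exploit the torus normalizing \(U_m\): the weight decomposition, the shift \(L_{\alpha\beta}W_\lambda\subseteq W_{\lambda+\epsilon_\alpha-\epsilon_\beta}\), and a lexicographically maximal weight. Once the weight decomposition is in place, this is pure linear algebra with no limits. It also identifies the invariants you produce as highest weight vectors, which is the natural entry point into highest-weight theory. The price is that it relies on the torus of \(\GL(m)\) acting on \(U_m\) through positive roots, whereas the paper's lemma applies to any polynomial representation of \(U_m\) by itself. Note also that your ``disguised'' orbit limit runs along the torus orbit of \(\diag(t^{m-1},\dotsc,t,1)\), while the paper's runs along unipotent orbits, so these are genuinely different constructions.

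\textbf{Two small remarks.}
\begin{enumerate}
  \item The nilpotence in your step 1 is never used. You only need that \(L_{\alpha\beta}w=0\) implies \(\exp(sL_{\alpha\beta})w=w\).
  \item In step 2, ``decomposing into monomials'' should be backed by a coefficient comparison. Write \(\rho(\diag(t))=\sum_\lambda t^\lambda A_\lambda\). Then \(\rho(\diag(st))=\rho(\diag(s))\rho(\diag(t))\), valid on the open set \((\R^*)^m\times(\R^*)^m\), together with \(\rho(I)=I\), gives \(A_\lambda A_\mu=\delta_{\lambda\mu}A_\lambda\) and \(\sum_\lambda A_\lambda=I\). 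Hence \(W=\bigoplus_\lambda \operatorname{im}A_\lambda\).
\end{enumerate}
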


It will be a consequence of complete reducibility and of the following result.
\begin{lemm}
  \label{lemm:WU}
  Any non-zero finite-dimensional rational \(\GL(m)\)-representation admits a non-zero \(U_m\)-invariant.
\end{lemm}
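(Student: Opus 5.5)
The plan is to prove Lemma~\ref{lemm:WU} by an orbit-limit argument, following the title of this appendix. We use a one-parameter subgroup of diagonal matrices that contracts \(U_m\) onto the identity.

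Let \(\rho\colon\GL(m)\to\GL(W)\) be a non-zero finite-dimensional rational representation. First we diagonalize the torus. The diagonal subgroup \(T\) of \(\GL(m)\) is commutative, and \(\rho(t)\) depends rationally, with denominators powers of \(\det\), on \(t=\diag(t_1,\dotsc,t_m)\). A standard argument then shows that \(W\) splits as the direct sum of weight spaces
\[
  W_\chi=\Set{\vb*{w}\in W\colon \rho(t)\vb*{w}=t_1^{\chi_1}\dotsm t_m^{\chi_m}\vb*{w}},\qquad \chi\in\mathbb{Z}^m.
\]
To prove it, expand \(\rho(t)\) as a Laurent polynomial \(\sum_\chi t^\chi A_\chi\). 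The relation \(\rho(ts)=\rho(t)\rho(s)\) gives \(A_\chi A_\psi=\delta_{\chi\psi}A_\chi\), and \(\rho(I)=\id\) gives \(\sum_\chi A_\chi=\id\). So the \(A_\chi\) are orthogonal projectors onto the \(W_\chi\).

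Next we look at how \(U_m\) moves weights. Conjugating \(I+sE_{\alpha\beta}\) (\(\alpha<\beta\)) by \(t\) gives \(I+s\,t_\alpha t_\beta^{-1}E_{\alpha\beta}\). Write \(\rho(I+sE_{\alpha\beta})=\sum_{k\geq0}s^kX_k\); this is polynomial in \(s\) because \(\det(I+sE_{\alpha\beta})=1\). Comparing both sides of \(\rho(t)\rho(I+sE_{\alpha\beta})\rho(t)^{-1}=\rho(I+st_\alpha t_\beta^{-1}E_{\alpha\beta})\) shows that \(X_k\) sends \(W_\chi\) into \(W_{\chi+k(e_\alpha-e_\beta)}\). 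So each unipotent generator only raises weights in the dominance direction, meaning it adds positive roots \(e_\alpha-e_\beta\) with \(\alpha<\beta\).

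Finally we pick a highest weight. Choose a linear functional \(\ell(\chi)=\sum_i c_i\chi_i\) with \(c_1>c_2>\dotsb>c_m\) generic, so that \(\ell\) takes distinct values on the finitely many weights occurring in \(W\). Take \(\chi\) with \(W_\chi\neq0\) and \(\ell(\chi)\) maximal, and any non-zero \(\vb*{w}\in W_\chi\). For \(k\geq1\), \(X_k\vb*{w}\) lies in a weight space of strictly larger \(\ell\)-value, so it is zero. Hence \(\vb*{w}\) is fixed by every \(U_{\alpha\beta}\), \(\alpha<\beta\). These subgroups generate \(U_m\), as recalled earlier in the paper, so \(\vb*{w}\) is a non-zero \(U_m\)-invariant.

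In limit language this is the statement that \(\lim_{s\to\infty}\) of the normalized \(\rho(\diag(e^{sc_1},\dotsc,e^{sc_m}))\vb*{v}\) exists and is \(U_m\)-fixed. The main obstacle is the first step, the weight decomposition for a merely \emph{rational} representation of the real torus. I would handle it with the Laurent-coefficient computation above, which avoids any appeal to eigenvalues of commuting operators over \(\R\). The rest is bookkeeping.
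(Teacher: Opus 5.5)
Your proof is correct, and it takes a genuinely different route from the paper's.

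\textbf{The paper's route.} The paper never uses the torus. It filters \(U_m\) by subgroups \(S_i=Q_iS_{i-1}\), where \(Q_i\) is the abelian group of unitriangular matrices supported in row \(i\); it is a vector group and normalizes \(S_{i-1}\). It then shows by hand that, for a polynomial action of a vector group, the orbit of a line converges in projective space to a line of fixed points, obtained by taking \(s^\delta\rho(1/s)\vb*{v}\) at \(s=0\). Finally it pushes a line successively into \(W^{S_1},W^{S_2},\dotsc,W^{S_{m-1}}=W^{U_m}\). This argument only uses that the restriction of \(\rho\) to \(U_m\) is polynomial. So it really proves the stronger, Kolchin-type statement that every polynomial representation of \(U_m\) has a non-zero fixed vector. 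The cost is the inductive filtration and the limit argument.

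\textbf{Your route.} You use the diagonal torus, which normalizes \(U_m\) and rescales each root group \(U_{\alpha\beta}\) by \(t_\alpha t_\beta^{-1}\). All three of your steps are sound:
\begin{enumerate}
\item the Laurent coefficients \(A_\chi\) are complementary, mutually annihilating idempotents with images \(W_\chi\);
\item comparing coefficients of \(s^k\) gives \(X_kW_\chi\subseteq W_{\chi+k(e_\alpha-e_\beta)}\);
\item a vector of \(\ell\)-maximal weight is killed by every \(X_k\), \(k\geq1\).
\end{enumerate}
Genericity of the \(c_i\) is not even needed; \(c_1>\dotsb>c_m\) suffices. Your route needs \(W\) to be a representation of the upper triangular group, not merely of \(U_m\), which is harmless here. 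In exchange, it uses only coefficient comparisons and no limits or filtration. It also yields more: the invariant you produce is a \(T\)-eigenvector, i.e.\ a highest weight vector, which is the starting point of highest weight theory.

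\textbf{A caveat on your closing remark.} This concerns only your ``limit language'' sentence, not the proof. The normalized curve \(\rho(\diag(e^{sc_1},\dotsc,e^{sc_m}))\vb*{v}\) converges to the line spanned by the component of \(\vb*{v}\) whose weight is \(\ell\)-maximal among the weights occurring in \(\vb*{v}\). That component need not be \(U_m\)-fixed. For example, take the standard representation of \(\GL(2)\) and \(\vb*{v}=\vb*{e}_2\): the normalized curve is constant equal to \(\vb*{e}_2\), yet \((I+sE_{12})\vb*{e}_2=\vb*{e}_2+s\vb*{e}_1\). The sentence becomes true once \(\vb*{v}\) has a non-zero component in the weight space of maximal \(\ell\)-value in all of \(W\), e.g.\ for generic \(\vb*{v}\).
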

\begin{proof}
  We use an orbit-limit method to give a constructive proof of a \(U_m\)-invariant.
  We have split the proof in three parts for more clarity.

  \noindent\textbf{a-} \underline{Filtration of \(U_m\).}
  We write elements of \(U_m\) as \(u=I+N\), with \(N\) strictly upper-triangular.
  For each row \(i\), define
  \[
    Q_i 
    \bydef
    \Set{I+N \colon N \text{ has support only in row } i} .
  \]
  Since the matrices \(E_{ij}\) with fixed \(i\) commute, the group law reduces to addition of coordinates; hence there is a well-known isomorphism \((Q_i,*) \simeq (\R^{i-1},+)\).
  Next, define subgroups inductively by \(S_0=\{I\}\) and \(S_{i+1}=Q_{i+1}S_{i}\).
  These groups are well-defined because for any \(g\in Q_{i+1}\) and \(u\in S_i\), one has \(gug^{-1}\in S_i\).
  One has
  \[
    S_i 
    \bydef
    \Set{I+N \colon N \text{ has support only in row } \leq i } .
  \]
  This yields an increasing filtration
  \[
    S_0 \subseteq S_1 \subseteq \dotsb \subseteq S_{m-1}=U_m.
  \]

  \noindent\textbf{b-} \underline{An orbit-limit lemma.}
  We will prove the following intermediary result.
  Let \(W\) be a finite-dimensional polynomial representation of \(\R^k\).
  Consider the induced action on lines \(\ell\subseteq W\).
  Then the lines \((t_1,\dotsc,t_k)\cdot\ell\) converge to a line of invariant points as \(\abs{t_i}\to+\infty\).

  We start with \(k=1\).
  Let \(\rho\colon\R\to\GL(W)\) denote the polynomial action of \(\R\) on \(W\), of degree \(\delta\).
  Then for \(s\neq0\), \(q(s,v)=s^\delta\rho(1/s)(v)\) is polynomial in \(s\) and colinear to \(\rho(1/s)(v)\).
  This allows to compute the limit of \([\rho(t)(v)]\) when \(\abs{t}\to+\infty\) as \([q(0,v)]\).
  This gives an invariant line because
  \[
    [\rho(t')(\rho(t)(v))]
    =
    [\rho(t'+t)(v)],
  \]
  and \(\rho(t')\) is continuous.
  The restriction of the \(\R\)-action to this line is an algebraic morphism \(\R\to\GL(\R)\), that is an algebraic power map, hence it is trivial.

  We can now extend the result to any \(k\) inductively by noticing that 
  \(
  \R^{k+1}=\R^k\times\R
  \),
  and that the actions of the two factors commute.
  Hence \(\R\) acts on \(W^{\R^k}\) and \((W^{\R^k})^{\R}=W^{\R^{k+1}}\).

  \noindent\textbf{c-} \underline{Construction of an invariant.}
  We can now finish the proof.
  Since the \(\GL(m)\)-representation \(W\) is rational, the restriction of the action to \(U_m\) is polynomial.
  Moreover for any \(g\in Q_{i+1}\) and \(u\in U_i\), one has \(g^{-1}ug\in U_i\), hence the action of \(Q_{i+1}\) on \(W\) induces an action of \(Q_{i+1}\) on the subspace \(W^{U_i}\) and \((W^{U_i})^{Q_{i+1}}=W^{U_{i+1}}\).

  Starting from \(v\in W\setminus\Set{0}\), we apply the previous lemma inductively by setting
  \(\ell_0\bydef[v]\in P(W)\) and then 
  \[
    \ell_{i+1}
    \bydef
    (Q_{i+1}\cdot\ell_i)_\infty
    \in
    P\big(W^{U_{i+1}}\big).
  \]
  We end up with a line \(\ell_{m-1}\) of \(U\)-invariant points.
\end{proof}

We can now finish the proof of the theorem.
Any finite-dimensional rational \(\GL(m)\)-representation decomposes as a direct sum of irreducible rational \(\GL(m)\)-representations,
see Appendix~\ref{sec:reducibility}.
Therefore, it is sufficient to prove the results for irreducible representations.
But an irreducible representation \(W\) is generated by any of its non-zero elements. We take any non-zero element of \(W^{U_m}\) (which is not reduced to \(\{0\}\) by the first point).

\end{document}